\documentclass[12pt,twoside]{preprint}
\usepackage{amssymb}
\usepackage{amsmath}

\usepackage{hyperref}
\usepackage{breakurl}

\usepackage{mhenvs}
\usepackage{mhequ}
\usepackage{mhsymb}
\usepackage{booktabs}
\usepackage{tikz}
\usepackage{mathrsfs}
\usepackage[osf]{newtxtext}
\usepackage{microtype}
\usepackage{comment}

\makeatletter
\pgfdeclareshape{crosscircle}
{
  \inheritsavedanchors[from=circle] 
  \inheritanchorborder[from=circle]
  \inheritanchor[from=circle]{north}
  \inheritanchor[from=circle]{north west}
  \inheritanchor[from=circle]{north east}
  \inheritanchor[from=circle]{center}
  \inheritanchor[from=circle]{west}
  \inheritanchor[from=circle]{east}
  \inheritanchor[from=circle]{mid}
  \inheritanchor[from=circle]{mid west}
  \inheritanchor[from=circle]{mid east}
  \inheritanchor[from=circle]{base}
  \inheritanchor[from=circle]{base west}
  \inheritanchor[from=circle]{base east}
  \inheritanchor[from=circle]{south}
  \inheritanchor[from=circle]{south west}
  \inheritanchor[from=circle]{south east}
  \inheritbackgroundpath[from=circle]
  \foregroundpath{
    \centerpoint%
    \pgf@xc=\pgf@x%
    \pgf@yc=\pgf@y%
    \pgfutil@tempdima=\radius%
    \pgfmathsetlength{\pgf@xb}{\pgfkeysvalueof{/pgf/outer xsep}}%
    \pgfmathsetlength{\pgf@yb}{\pgfkeysvalueof{/pgf/outer ysep}}%
    \ifdim\pgf@xb<\pgf@yb%
      \advance\pgfutil@tempdima by-\pgf@yb%
    \else%
      \advance\pgfutil@tempdima by-\pgf@xb%
    \fi%
    \pgfpathmoveto{\pgfpointadd{\pgfqpoint{\pgf@xc}{\pgf@yc}}{\pgfqpoint{-0.707107\pgfutil@tempdima}{-0.707107\pgfutil@tempdima}}}
    \pgfpathlineto{\pgfpointadd{\pgfqpoint{\pgf@xc}{\pgf@yc}}{\pgfqpoint{0.707107\pgfutil@tempdima}{0.707107\pgfutil@tempdima}}}
    \pgfpathmoveto{\pgfpointadd{\pgfqpoint{\pgf@xc}{\pgf@yc}}{\pgfqpoint{-0.707107\pgfutil@tempdima}{0.707107\pgfutil@tempdima}}}
    \pgfpathlineto{\pgfpointadd{\pgfqpoint{\pgf@xc}{\pgf@yc}}{\pgfqpoint{0.707107\pgfutil@tempdima}{-0.707107\pgfutil@tempdima}}}
  }
}
\makeatother

\colorlet{symbols}{black}      
\colorlet{testcolor}{green!60!black}


\def\symbol#1{\textcolor{symbols}{#1}}

\def\1{\mathbf{\symbol{1}}}

\usetikzlibrary{shapes.misc}
\usetikzlibrary{shapes.symbols}
\usetikzlibrary{shapes.geometric}
\usetikzlibrary{snakes}
\usetikzlibrary{decorations}
\usetikzlibrary{decorations.markings}

\def\drawx{\draw[-,solid] (-3pt,-3pt) -- (3pt,3pt);\draw[-,solid] (-3pt,3pt) -- (3pt,-3pt);}
\tikzset{
	root/.style={circle,fill=testcolor,inner sep=0pt, minimum size=2mm},
	dot/.style={circle,fill=black,inner sep=0pt, minimum size=1mm},
	var/.style={circle,fill=black!10,draw=black,inner sep=0pt, minimum size=2mm},
	dotred/.style={circle,fill=black!50,inner sep=0pt, minimum size=2mm},
	generic/.style={semithick,shorten >=1pt,shorten <=1pt},
	dist/.style={ultra thick,draw=testcolor,shorten >=1pt,shorten <=1pt},
	testfcn/.style={ultra thick,testcolor,shorten >=1pt,shorten <=1pt,<-},
	testfcnx/.style={ultra thick,testcolor,shorten >=1pt,shorten <=1pt,<-,
		postaction={decorate,decoration={markings,mark=at position 0.6 with {\drawx}}}},
	kprime/.style={semithick,shorten >=1pt,shorten <=1pt,densely dashed,->},
	kprimex/.style={semithick,shorten >=1pt,shorten <=1pt,densely dashed,->,
		postaction={decorate,decoration={markings,mark=at position 0.4 with {\drawx}}}},
	kernel/.style={semithick,shorten >=1pt,shorten <=1pt,->},
	multx/.style={shorten >=1pt,shorten <=1pt,
		postaction={decorate,decoration={markings,mark=at position 0.5 with {\drawx}}}},
	kernelx/.style={semithick,shorten >=1pt,shorten <=1pt,->,
		postaction={decorate,decoration={markings,mark=at position 0.4 with {\drawx}}}},
	kernel1/.style={->,semithick,shorten >=1pt,shorten <=1pt,postaction={decorate,decoration={markings,mark=at position 0.45 with {\draw[-] (0,-0.1) -- (0,0.1);}}}},
	kernel2/.style={->,semithick,shorten >=1pt,shorten <=1pt,postaction={decorate,decoration={markings,mark=at position 0.45 with {\draw[-] (0.05,-0.1) -- (0.05,0.1);\draw[-] (-0.05,-0.1) -- (-0.05,0.1);}}}},
	kernelBig/.style={semithick,shorten >=1pt,shorten <=1pt,decorate, decoration={zigzag,amplitude=1.5pt,segment length = 3pt,pre length=2pt,post length=2pt}},
	rho/.style={dotted,semithick,shorten >=1pt,shorten <=1pt},
	renorm/.style={shape=circle,fill=white,inner sep=1pt},
	labl/.style={shape=rectangle,fill=white,inner sep=1pt},
cumu2n/.style={inner sep=3pt},
cumu2/.style={draw=red!50,fill=red!20},
cumu3/.style={regular polygon, regular polygon sides=3,draw=red!50,rounded corners=3pt,fill=red!20,minimum size=5mm},
cumu4/.style={regular polygon, regular polygon sides=4,draw=red!50,rounded corners=3pt,fill=red!20,minimum size=7mm},
cumu5/.style={regular polygon, regular polygon sides=5,draw=red!50,rounded corners=3pt,fill=red!20,minimum size=5mm},
	xi/.style={circle,fill=symbols!10,draw=symbols,inner sep=0pt,minimum size=1.2mm},
	xix/.style={crosscircle,fill=symbols!10,draw=symbols,inner sep=0pt,minimum size=1.2mm},
	xib/.style={circle,fill=symbols!10,draw=symbols,inner sep=0pt,minimum size=1.6mm},
	xibx/.style={crosscircle,fill=symbols!10,draw=symbols,inner sep=0pt,minimum size=1.6mm},
	not/.style={circle,fill=symbols,draw=symbols,inner sep=0pt,minimum size=0.5mm},
	>=stealth,
	}
\makeatletter
\def\DeclareSymbol#1#2#3{\expandafter\gdef\csname MH@symb@#1\endcsname{\tikz[baseline=#2,scale=0.15,draw=symbols]{#3}}\expandafter\gdef\csname MH@symb@#1s\endcsname{\scalebox{0.7}{\tikz[baseline=#2,scale=0.15,draw=symbols]{#3}}}}
\def\<#1>{\csname MH@symb@#1\endcsname}
\makeatother


\DeclareSymbol{Xi22}{0.5}{\draw (0,0) node[xi] {} -- (-1,1) node[not] {} -- (0,2) node[xi] {};}

\DeclareSymbol{Xi2}{-2}{\draw (0,-0.25) node[xi] {} -- (-1,1) node[xi] {};}
\DeclareSymbol{Xi3}{0}{\draw (0,0) node[xi] {} -- (-1,1) node[xi] {} -- (0,2) node[xi] {};}
\DeclareSymbol{Xi4}{2}{\draw (0,0) node[xi] {} -- (-1,1) node[xi] {} -- (0,2) node[xi] {} -- (-1,3) node[xi] {};}
\DeclareSymbol{Xi2X}{-2}{\draw (0,-0.25) node[xi] {} -- (-1,1) node[xix] {};}
\DeclareSymbol{XXi2}{-2}{\draw (0,-0.25) node[xix] {} -- (-1,1) node[xi] {};}

\DeclareSymbol{IXi2}{0}{\draw (0,-0.25) node[not] {} -- (-1,1) node[xi] {} -- (0,2) node[xi] {};}
\DeclareSymbol{IXi^2}{-1}{\draw (-1,1) node[xi] {} -- (0,0) node[not] {} -- (1,1) node[xi] {};}

\DeclareSymbol{XiX}{-2.8}{\node[xibx] {};}
\DeclareSymbol{Xi}{-2.8}{\node[xib] {};}
\DeclareSymbol{IXiX}{-1}{\draw (0,-0.25) node[not] {} -- (-1,1) node[xix] {};}

\DeclareSymbol{Xi3b}{-1}{\draw (-1,1) node[xi] {} -- (0,0) node[xi] {} -- (1,1) node[xi] {};}

\DeclareSymbol{IXi3}{2}{\draw (0,-0.25) node[not] {} -- (-1,1) node[xi] {} -- (0,2) node[xi] {} -- (-1,3) node[xi] {};}
\DeclareSymbol{IXi}{-2}{\draw (0,-0.25) node[not] {} -- (-1,1) node[xi] {};}
\DeclareSymbol{XiI}{-2}{\draw (0,-0.25) node[xi] {} -- (-1,1) node[not] {};}

\DeclareSymbol{Xi4b}{-1}{\draw(0,1.5) node[xi] {} -- (0,0); \draw (-1,1) node[xi] {} -- (0,0) node[xi] {} -- (1,1) node[xi] {};}
\DeclareSymbol{Xi4b'}{-1}{\draw(0,1.5) node[xi] {} -- (0,-0.2); \draw (-1,1) node[xi] {} -- (0,-0.2) node[not] {} -- (1,1) node[xi] {};}
\DeclareSymbol{Xi4c}{0}{\draw (0,1) -- (0.8,2.2) node[xi] {};\draw (0,-0.25) node[xi] {} -- (0,1) node[xi] {} -- (-0.8,2.2) node[xi] {};}
\DeclareSymbol{Xi4d}{-4.5}{\draw (0,-1.5) node[not] {} -- (0,0); \draw (-1,1) node[xi] {} -- (0,0) node[xi] {} -- (1,1) node[xi] {};}
\DeclareSymbol{Xi4e}{0}{\draw (0,2) node[xi] {} -- (-1,1) node[xi] {} -- (0,0) node[xi] {} -- (1,1) node[xi] {};}
\DeclareSymbol{Xi4e'}{0}{\draw (0,2) node[xi] {} -- (-1,1) node[xi] {} -- (0,-0.2) node[not] {} -- (1,1) node[xi] {};}

\newtheorem{assumption}[lemma]{Assumption}



\def\s{\mathfrak{s}}

\def\K{\mathfrak{K}}

\def\TT{\mathscr{T}}

\def\${|\!|\!|}

\def\Z{\mathbb{Z}}

\def\R{\mathbb{R}}
\def\N{\mathbb{N}}

\def\cB{\mathcal{B}}

\def\cR{\mathcal{R}}

\def\CK{\mathcal{K}}
\def\sD{\mathscr{D}}
\def\bfc{\mathbf{c}}
\def\dbar\K{\bar{\bar\K}}

\def\I{\mathrm{I}}
\def\II{\mathrm{I\kern-0.1emI}}
\def\III{\mathrm{I\kern-0.1emI\kern-0.1emI}}
\def\IV{\mathrm{I\kern-0.1emV}}

\def\id{\mathrm{id}}


\DeclareSymbol{X}{-2.4}{\node[dot] {};}
\DeclareSymbol{1}{0}{\draw[white] (-.4,0) -- (.4,0); \draw (0,0)  -- (0,1.2) node[dot] {};}
\DeclareSymbol{2}{0}{\draw (-0.6,1.3) node[dot] {} -- (0,0) -- (0.6,1.3) node[dot] {};}

\DeclareSymbol{3}{0}{\draw (0,0) -- (0,1.2) node[dot] {}; \draw (-.7,1) node[dot] {} -- (0,0) -- (.7,1) node[dot] {};}

\DeclareSymbol{31}{-3}{\draw (0,0) -- (0,-1) -- (1,0) node[dot] {}; \draw (0,0) -- (0,1.2) node[dot] {}; \draw (-.7,1) node[dot] {} -- (0,0) -- (.7,1) node[dot] {};}
\DeclareSymbol{30}{-3}{\draw (0,0) -- (0,-1); \draw (0,0) -- (0,1.2) node[dot] {}; \draw (-.7,1) node[dot] {} -- (0,0) -- (.7,1) node[dot] {};}
\DeclareSymbol{32}{-3}{\draw (0,0) -- (0,-1) -- (1,0) node[dot] {}; \draw (0,0) -- (0,-1) -- (-1,0) node[dot] {}; \draw (0,0) -- (0,1.2) node[dot] {}; \draw (-.7,1) node[dot] {} -- (0,0) -- (.7,1) node[dot] {};}
\DeclareSymbol{22}{-3}{\draw (0,0.3) -- (0,-1) -- (1,0) node[dot] {}; \draw (0,0.3) -- (0,-1) -- (-1,0) node[dot] {};\draw (-.7,1) node[dot] {} -- (0,0.3) -- (.7,1) node[dot] {};}
\DeclareSymbol{20}{-3}{\draw (0,0) -- (0,-1);\draw (-.7,1) node[dot] {} -- (0,0) -- (.7,1) node[dot] {};}
\DeclareSymbol{12}{-3}{\draw (0,0.3) -- (0,-1) -- (1,0) node[dot] {}; \draw (0,0.3) -- (0,-1) -- (-1,0) node[dot] {};\draw (-.7,1) node[dot] {} -- (0,0.3);}
\DeclareSymbol{10}{-3}{\draw (0,0.3) -- (0,-1);\draw (-.7,1) node[dot] {} -- (0,0.3);}
\DeclareSymbol{21}{-3}{\draw (0,0.3) -- (0,-1) -- (1,0) node[dot] {};\draw (-.7,1) node[dot] {} -- (0,0.3) -- (.7,1) node[dot] {};}
\DeclareSymbol{21a}{-3}{\draw (0,0.3) -- (0,-1) -- (1,0) node[dot] {};\draw  (0,0.3) -- (.7,1) node[dot] {};}
\DeclareSymbol{211}{3}{
	\draw (0,0) -- (-2,3) node[dot] {};
	\draw (0,0)  -- (1,1) node[dot] {};
	\draw (-0.67,1)  -- (.33,2) node[dot] {};
	\draw (-1.33,2)  -- (-0.33,3) node[dot] {};
}
\DeclareSymbol{4}{-3}{
	\draw (-1,0)  -- (0,-1.2) -- (1,0) ;
	\draw (-1.5,1) node[dot] {} -- (-1,0) -- (-0.5,1) node[dot] {};  
	\draw (1.5,1) node[dot] {} -- (1,0) -- (0.5,1) node[dot] {};
}

\definecolor{Red}{rgb}{1,0,0}
\definecolor{Blue}{rgb}{0,0,1}
\definecolor{Olive}{rgb}{0.41,0.55,0.13}
\definecolor{Yarok}{rgb}{0,0.5,0}
\definecolor{Green}{rgb}{0,1,0}
\definecolor{MGreen}{rgb}{0,0.8,0}
\definecolor{DGreen}{rgb}{0,0.55,0}
\definecolor{Yellow}{rgb}{1,1,0}
\definecolor{Cyan}{rgb}{0,1,1}
\definecolor{Magenta}{rgb}{1,0,1}
\definecolor{Orange}{rgb}{1,.5,0}
\definecolor{Violet}{rgb}{.5,0,.5}
\definecolor{Purple}{rgb}{.75,0,.25}
\definecolor{Brown}{rgb}{.75,.5,.25}
\definecolor{Grey}{rgb}{.7,.7,.7}
\definecolor{Black}{rgb}{0,0,0}

\def\diam{\mathop{\mathrm{diam}}}

\begin{document}

\title{Discretisation of regularity structures}
\author{Dirk Erhard$^1$ and Martin Hairer$^2$}
\institute{University of Warwick, UK, \email{D.Erhard@warwick.ac.uk} 
\and University of Warwick, UK, \email{M.Hairer@Warwick.ac.uk}}

\maketitle

\begin{abstract}
We introduce a general framework allowing to apply the theory of regularity structures to
discretisations of stochastic PDEs. The approach pursued in this article is that we do not
focus on any one specific discretisation procedure. Instead, we assume that we are given a 
scale $\eps > 0$ and a ``black box'' describing the behaviour of our discretised objects
at scales below $\eps$.
\end{abstract}

\setcounter{tocdepth}{2}
\tableofcontents

\section{Introduction}
\label{S1}

The theory of regularity structures is a framework developed by the second author in~\cite{Regularity} which
allows to renormalise stochastic PDEs of the form
\begin{equation}
\label{eq:generalpde}
\CL u = F(u,\nabla u,\xi),
\end{equation}
that are ill-posed in the classical sense. Here, $\CL$ is typically a parabolic differential operator, $\xi$ is a very irregular random input and $F$ is some local non-linearity. 
The na\"{i}ve approach to study well-posedness of~\eqref{eq:generalpde} would be to consider a sequence of smooth approximations
\begin{equation}
\label{eq:smoothgeneralpde}
\CL u_{\eps} =F(u_\eps,\nabla u_\eps,\xi_\eps),
\end{equation}
and simply declare the limit of this sequence to be the solution to~\eqref{eq:generalpde}.
However, in many cases it turns out that the sequence $u_\eps$ either does not possess a limit or the limit is trivial.
A way to circumvent this is to renormalise, which can be interpreted as a kind of ``recentering'' of the equation,
see \cite{BHZalg}. 

This amounts to allowing some of the constants appearing in the definition of $F$ appearing in the right hand side of~\eqref{eq:smoothgeneralpde} to be $\eps$-dependent and more specifically to diverge as $\eps$ tends to zero.
The theory of regularity structures is very successful when applied to approximations $u_\eps$ that are 
functions defined on $\R^d$,
it however presently does not in general apply to discrete approximations of stochastic PDEs or to equations where 
the operator $\CL$ itself is perturbed by a higher-order term, in such a way that its
scaling properties are different at small scales (think for instance of $\CL$ given by $\partial_t-\Delta$ and $\CL_\eps= \partial_t-\Delta + \eps^2 \Delta^2$).
In the present article, we develop a general framework that is able to deal with these cases.
The guiding principle is a separation of scales, i.e., above a certain scale $\eps$ we show that the theory of regularity structures still applies, whereas scales below $\eps$ are treated as a black box.
As a result, our framework is flexible and does not rely on any specific discretisation procedure. More precisely
the results in the article show that, given a stochastic PDE such that the theory of regularity structures 
can be applied to renormalise it, our present framework can be used to treat a large class of natural discretisations
for it.

\subsection{Discussion}

Let us first comment on our motivation for this work as well as its relation to some previous work.
One of the guiding questions in the study of particle systems is the question of universality, i.e., what 
are the characterising features for a class of particle systems that converge to the same limit 
under appropriate rescaling and how can one prove this convergence?
The bulk of the literature is concerned with scaling limits of a \textit{fixed} system which necessarily
implies that the resulting limit is itself a scale-invariant object.
The present article however is motivated by the situation where one considers a \textit{family} of systems
indexed by some parameter and one simultaneously tunes this parameter as one rescales the system.
In this way, one typically obtains scaling limits that are \textit{not} scale invariant themselves.
One of the insights of the theory presented in \cite{Regularity} is that they are however \textit{locally}
described by linear combinations of objects that are scale-invariant, but with different scaling exponents.

So far, the state of the art for answering such questions (in the second case where the limiting object
can, at least formally, be described by a singular stochastic PDE) is to 
heavily rely on special features of the model(s) under consideration. In the type of situation of interest to
us, some standard techniques consist of
\begin{claim}
\item extending the discrete equation to an equation defined on $\R^d$, see for instance the articles of Gubinelli and Perkowski~\cite{KPZreloaded}, Mourrat and Weber~\cite{MourratWeber}, Shen and Weber~\cite{ShenWeber}, Zhu and Zhu~\cite{ZhuZhuNS,ZhuZhuPhi};
\item linearising the problem via a Hopf-Cole transformation, see for example the articles of Bertini and Giacomin~\cite{MR1462228}, Corwin and Shen~\cite{CorwinShen}, Corwin, Shen and Tsai~\cite{CorwinShenTsai}, Corwin and Tsai~\cite{CorwinTsai}, Dembo and Tsai~\cite{DemboTsai}, Labb\'e~\cite{Labbe} in the setting of the KPZ equation;
\item transforming the equation into a martingale problem, see for instance the series of articles by Gon\c{c}alvez and Jara~\cite{GoncalvesJaraKPZ}, Gubinelli and Jara~\cite{GubinelliJara}, Gubinelli and Perkowski~\cite{GubinelliPerkowskienergy}, in which the concept of energy solution is developed to study the stochastic Burgers / KPZ equation.
\end{claim}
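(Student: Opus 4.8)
Since the statement above describes the present state of the art rather than asserting a mathematical fact, in place of a proof I outline the strategy by which the body of the paper develops a model-independent alternative: each of the three routes listed --- continuum extension, Hopf--Cole linearisation, and the martingale-problem / energy-solution formulation --- will be replaced by a single model-independent argument based on a separation of scales. The common role of those three devices is to transport a discrete problem, posed on a lattice such as $\eps\Z^d$, into a setting where the analytic core of the theory of regularity structures of \cite{Regularity} can operate. The plan is instead to build a discrete analogue of that theory directly: a discrete notion of model, a discrete scale of $\mathcal{D}^\gamma$-type modelled-distribution spaces, and a discrete reconstruction operator, with every estimate carrying an explicit $\eps$-dependence that is inert above scale $\eps$ and quarantined below it into a ``black box'' hypothesis on finitely many explicit stochastic objects. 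With this in hand the continuum extension of the first family of references becomes unnecessary: one works on the lattice throughout, the only stochastic inputs being the black-box bounds.

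Next I would recover the remaining two routes as instances of the general calculus rather than as tricks. At the level of regularity structures the Hopf--Cole transformation is just composition with an analytic exponential; once that operation is realised on the algebra of the structure, the linearisation and its discrete counterpart follow from the discrete model bounds with no further input. Likewise, the quadratic-variation identities that underpin the energy-solution method are encoded in the second-order stochastic data whose $\eps$-uniform estimates are precisely what the black box supplies, so convergence of the relevant martingales reduces to convergence of those data and appeals to no Markovian structure peculiar to the model at hand.

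The hard part will be uniformity. Every estimate --- on the discrete model, on the discrete reconstruction map, and on the renormalised stochastic objects --- must hold with constants independent of $\eps$, and the renormalisation constants must be shown to converge, or to diverge at exactly the rate predicted by the continuum theory, as $\eps \to 0$. This forces a reworking of the multiscale analysis of \cite{Regularity} with $\eps$ threaded through every bound, carefully separating the estimates that survive only above scale $\eps$ from those supplied by the black box below it, and checking that the two regimes glue consistently across scale $\eps$. Once that bookkeeping is complete, the three techniques named in the statement become corollaries rather than necessities: for any singular SPDE to which the continuum theory applies and any discretisation meeting the black-box hypothesis, the discrete solutions will converge to the correct renormalised limit, with no appeal to features special to KPZ, $\Phi^4$, or the Navier--Stokes equations.
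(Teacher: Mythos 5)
The text you were asked to address is not a mathematical statement at all: it is an itemised survey, in the introduction, of three existing techniques in the literature, each supported only by citations. The paper contains no proof of it and none is required, so there is nothing to compare your argument against. Your submission is in effect a summary of the paper's overall programme rather than a proof, and as such it captures the main thrust correctly: the paper does build a discretisation-agnostic analogue of the theory of regularity structures (discrete models in Definition~\ref{def:model}, the spaces $\CD^\gamma_\eps$, a postulated reconstruction operator whose sub-$\eps$ behaviour is a black box via Assumption~\ref{a:rec}, discrete convolution/Schauder estimates, and a fixed point theorem), with all estimates split into a regime above scale $\eps$ handled by multiscale arguments and a regime below scale $\eps$ delegated to assumed seminorm bounds.

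Where your summary overreaches is in claiming that the paper \emph{recovers} the Hopf--Cole transformation as composition with an abstract exponential, or that it reproduces the quadratic-variation identities underlying energy solutions. Neither appears anywhere in the paper: the local operations actually developed are multiplication, composition with smooth functions, and differentiation, and the paper's stance toward the three listed techniques is simply that they are powerful but model-sensitive, not that they become corollaries of the new framework. Likewise, the paper does not prove convergence of discrete solutions for any concrete equation; it supplies the abstract machinery and indicates (in the remarks around \cite[Thm~10.7]{Regularity} and Theorem~\ref{thm:extension}) what additional, equation-specific input would be needed. If you intend this as commentary on the introduction, trim those claims to match what the paper actually establishes.
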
 
When applied in the correct context these techniques can be very powerful. The drawback however is that they are often quite sensitive to small perturbations of the model. Another problem of analytical techniques like regularity structures
or paraproducts is that while they provide a rather clean and general-purpose toolbox in the continuum, their
extensions to the discrete setting require purpose-built modifications that are much less reusable.
It is therefore desirable to have a robust theory at hand which is insensitive to the details of the
underlying discrete setting, and it is the goal of the present article to make a first attempt at 
developing such a theory.

Another motivation to introduce the framework developed here stems from the fact that a common way to derive properties of a stochastic PDE is to approximate it by discrete systems for which the desired property holds and then show
that it remains stable under passage to the limit. This methodology was for instance successfully applied by Hairer and Matetski~\cite{MatetskiDiscrete} to prove that the $\Phi_3^4$-measure built in \cite{GlimmJaffe,Fel74}
is invariant for the $\Phi_3^4$-equation.
In the specific case where the solution to the stochastic PDE at hand is a function of time, \cite{MatetskiDiscrete} developed a framework adapting the theory of regularity structures to allow for certain spatial discretisations. 

\subsection{Strategy} 
This article aims to justify the philosophy that small scales actually do not matter much. The way we formulate this here is that we 
assume some reasonable (albeit technical) assumptions on scales smaller than $\eps$, that are almost independent of the specific discretisation one deals with, and we show that this implies desirable estimates on all scales up to order one. The precise strategy to do so then results in a series of statements that are described further below.

The main idea to accommodate a large class of discretisations of stochastic PDEs is to consider the behaviour below scale $\eps$
as encoded in a ``black box''. In order to describe this, our main ingredient
is a sequence of linear spaces $\CX_\eps$ that can be viewed as subspaces of $\CD'(\R^d)$, the space of distributions, and 
that possess a natural family of (extended) seminorms. We then work with stochastic PDEs of the type~\eqref{eq:smoothgeneralpde} 
such that $u_\eps\in\CX_\eps$. 
Examples for $\CX_\eps$ include $\CD'(\R^d)$ (thus recovering the original setting in~\cite{Regularity}), but also the space of functions defined on a discrete grid of mesh size $\eps$, or simply some space of smooth functions.
To cast this into the framework of regularity structures we first work with an abstract version of $u_\eps$, i.e., we write $u_\eps$ as a generalised Taylor expansion. Here, the monomials may represent the classical Taylor monomials or they may represent abstract expressions that are functions of $\xi$. The control of these monomials typically amount to the control of explicit stochastic objects and heavily depend on the choice of the equation at hand.

The ``Taylor coefficients'' can be thought of as the ``derivatives'' of an abstract version of a $\CC^\gamma$-function and they are given by 
the solution of an abstract fixed point problem. We denote the class of abstract functions just described by $\CD_\eps^{\gamma}$. The subindex $\eps$ indicates that the way regularity is measured above and below scale $\eps$ may differ. Indeed, above scale $\eps$ regularity is measured as in the continuous setting~\cite{Regularity}, whereas below scale $\eps$, besides some natural constraints, the way to measure regularity is not further determined. The idea of introducing $\CD^\gamma$-spaces that depend on a parameter $\eps$ is not 
new and already appeared in the works~\cite{KPZJeremy,Phi4Weijun}. In a sense the current article generalises some of the ideas developed there.

Note that the solution to~\eqref{eq:smoothgeneralpde} is a random space-time function / distribution, whereas the approach outlined above gives rise to an abstract ``modelled distribution'' $f\in\CD_\eps^{\gamma}$. To link the abstract object with a concrete object we construct a  ``reconstruction map'' $\CR^\eps$. Unlike in~\cite{Regularity}, 
we do not think of $\CR^\eps f$ as a distribution, but rather as an element of $\CX_\eps$, which could for
example represent a space of functions defined on a discrete grid at scale $\eps$.  The way to construct $\CR^\eps$ is to \emph{postulate} the existence of a map $\CR^\eps$ satisfying certain estimates on scales below $\eps$ and to then
show that analogous estimates automatically hold on all larger scales. Note that there may be several candidates for $\CR^\eps$ all satisfying the same quantitative estimates on small scales, so that in our context the reconstruction
map is in general not uniquely defined. To proceed, we further need to define operations on $\CD_\eps^{\gamma}$ in order to actually construct abstract solutions to~\eqref{eq:smoothgeneralpde}.
In particular we define an abstract notion of convolution $\CK_\gamma^{\eps}$ against the Green function $K^\eps$ of $\CL$ in~\eqref{eq:smoothgeneralpde} (which is an operator mapping into $\CX_\eps$!) and we show that $\CK_\gamma^{\eps}$ satisfies a certain Schauder estimate. It turns out that unlike in the continuous setting~\cite{Regularity} it is not necessarily true that the convolution operator $\CK_\gamma^{\eps}$ can be defined in a natural way so that 
it intertwines with $\CR^\eps$ in the sense that
\begin{equation}
\label{eq:commute}
\CR^\eps\CK_\gamma^{\eps}= K^\eps \CR^\eps.
\end{equation}
We however argue that in many cases one can enforce~\eqref{eq:commute} by tweaking the definition of $\CK_\gamma^{\eps}$.
So far the strategy outlined above appears to only allow us to describe solutions to~\eqref{eq:smoothgeneralpde}, 
but, as shown in~\cite{Regularity} and~\cite{BHZalg}, the encoding of renormalisation procedures in the theory
of regularity structures is of purely algebraic nature and does not depend on any specific discretisation procedure, see also Remark~\ref{rem:algebra} below.

We finish with some concluding remarks that comment on how one does apply the theory developed in this article.
\begin{remark}
\label{rem:algebra}
To apply the theory developed in this article to a concrete problem, an important ingredient is the construction of a 
suitable regularity structure and a suitable renormalised model. In the usual (continuous) case, a framework was 
built in~\cite[Sec.~8]{Regularity}, and further refined in \cite{BHZalg,Ajay} that automatises this construction.
To perform an analogous construction in the present context, one then needs an algebra structure on $\CX_\eps$,
as well as a representation of the Taylor polynomials, which is often the case.
The general analytical results of \cite{Ajay} however fall out the scope of this article and would have to be adapted. 
\end{remark}
\begin{remark}
It was shown in~\cite[Thm~10.7]{Regularity} (see also \cite{NonGaussShen,Ajay}) that in order to obtain 
convergence of a sequence of models to a limiting model it is enough to obtain suitable bounds on sufficiently high moments 
for its terms of negative homogeneity.
An important ingredient of the proof is the recursive definition of the regularity structure and the model, provided by the algebraic framework alluded to above. This is the same in the current case.
Another ingredient is the identity $\Pi_z \tau =\CR f_{\tau,z}$, where $f_{\tau,z}(y)= \Gamma_{yz} \tau-\tau$ for any $\ell\in A$ and $\tau\in\CT_{\ell}$ with positive homogeneity and $f_{\tau,z}(y)= \Gamma_{yz} \tau$ in case $\tau$ has negative homogeneity.
Provided that the norm $\$\cdot\$_{\gamma;\K;\eps}$ of Definition~\ref{def:admissible} below is chosen such that $\$z\mapsto \Gamma_{zz'}^\eps \tau\$_{\ell;\K;\eps}=0$
for any $\ell\in A$, any $\tau\in\CT_{\ell}$ and any $z'\in\R^d$, the aforementioned identity follows from Assumption~\ref{a:rec}.
A further ingredient is the characterisation of the spaces of distributions under consideration by a wavelet basis. 
On scales larger than $\eps$ a similar characterisation holds in the current case, see Definition~\ref{def:model} below. 
On small scales, the situation depends on the choice of seminorms on $\CX_\eps$ introduced in Definition~\ref{def:admissible},
but, in many cases of interest, we expect to have a choice of these seminorms which makes the corresponding bounds simple to verify.
Finally, the last ingredient is the extension theorem~\cite[Thm~5.14]{Regularity}. The analogous statement in the present setup is the content of Theorem~\ref{thm:extension}.
Thus, under the appropriate assumptions~\cite[Thm~10.7]{Regularity} also holds in the framework constructed in this article.
\end{remark}

\subsection{Structure of the article}
In Section~\ref{S2} we develop the framework we will work with, in particular we introduce the sequence of spaces $\CX_\eps$, we define the notion of discrete models and the $\CD_\eps^{\gamma}$-spaces.
In Sections~\ref{S3}--\ref{S6} we explain the main operations on these spaces.

\subsection{Notation} 

Throughout this work $\s=(\s_1,\ldots, \s_d)\in \N_{\geq 1}^d$ denotes a scaling of $\R^d$ and we associate 
to it the metric $d_\s$ on $\R^d$ given by
\begin{equation}
d_\s(y,z)=\sup_{i\in\{1,\ldots, d\}}|y_i-z_i|^{1/\s_i}.
\end{equation}
We sometimes use the notation $\|y-z\|_\s$ in place of $d_\s(y,z)$.
Moreover, we let $|\s|=\s_1+\cdots +\s_d$ and for a multiindex $k$ we use the notation $|k|_\s=\sum_{i}\s_ik_i$. Given a set $B\subset \R^d$ and $z\in\R^d$ we denote the distance of $z$ to $B$ with respect to the metric $d_\s$ by $d_\s(z,B)$.
Given $\delta>0$ and $\varphi:\R^d\to\R$ we set $\CS_\s^{\delta}(z_1,\ldots, z_d)= (\delta^{-\s_1}z_1,\ldots, \delta^{-\s_d}z_d)$ and $(\CS_{\s,z}^{\delta}\varphi)(y)=\delta^{-|\s|}\varphi(\CS_\s^{\delta}(y-z))$. We also use occasionally the notation $\varphi_z^{\delta}$. Moreover, we also write $\varphi_z^n$ in place of $\varphi_z^{2^{-n}}$. For a compact subset $\K$ of $\R^d$ (also written as $\K\Subset\R^d$) we denote
its $1$-fattening and $2$-fattening by $\bar{\K}$ and $\dbar\K$, respectively.
We further denote the $d_\s$-ball of radius $\delta$ around $z\in\R^d$ by
$\CB_\s(z,\delta)$.

We occasionally use the notation $[\eta]$ to denote the support of the function $\eta$.
In this article we use various notions of norms, seminorms and metrics on various spaces. To improve readability we inserted a norm index in the appendix listing all these distances.

Given a distribution $\xi$ and a test function $\phi$, we use interchangeably the notations $\xi(\phi)$,
$\scal{\xi,\phi}$, and $\int \phi(x)\,\xi(dx)$ for the corresponding pairing.

\subsection*{Acknowledgements}

{\small
MH gratefully acknowledges financial support from the
 Leverhulme Trust through a leadership award
 and from the European Research Council through a consolidator grant, project 615897.
}

\section{A framework for discretisations}
\label{S2}

\textbf{Convention:} From now on we assume that we are given a scaling $\s$ of $\R^d$ and
a regularity structure $\TT=(A,\CT, G)$ containing the canonical $\s$-scaled polynomials in $d$ indeterminates. 
We also fix a value $r> |\min A\,|$ and we denote by $\Phi$
the set of all functions $\phi\in \CC^r$ with $\|\phi\|_{\CC^r}\leq 1$ such that $\supp \phi\subset \CB_{\s}(0,1)$
and we simply say that ``$\phi$ is a test function'' whenever 
$\phi\in\Phi$.
Given an element $\tau \in \CT = \bigoplus_{\alpha \in A}\CT_\alpha$, we write $\|\tau\|_\alpha$ for
the norm of its component in $\CT_\alpha$ and $\|\tau\|$ for its norm in $\CT$.
Moreover $\CQ_\alpha$ denotes the projection onto $\CT_\alpha$.

We build a general framework for allowing for discretisations of models and their convergence
to continuous limiting models. Our construction relies crucially on the following notion.
\begin{definition}
\label{def:admissible}
A \textit{discretisation} for the regularity structure $\TT$ and scaling $\s$ on $\R^d$
consists of the following data.
\begin{claim}
\item[1.] 
A collection of linear spaces $\CX_\eps$ with $\eps \in (0,1]$ endowed
with inclusion maps $\iota_\eps \colon \CX_\eps \hookrightarrow \CD'(\R^d)$, so that 
elements of $\CX_\eps$ can be interpreted as distributions.
\item[2.] Each $\CX_\eps$ admits a family of extended seminorms $\|\cdot\|_{\alpha;\K_\eps;z;\eps}$ 
(i.e., we do \emph{not} exclude the possibility that $\|f\|_{\alpha;\K_\eps;z;\eps}=\infty$ for some $f\in\CX_\eps$) with
$\alpha \in \R$, the $\K_\eps$ are compact subsets of $\R^d$ of diameter at most $2\eps$ and $z\in\R^d$.
We moreover assume that these seminorms are local in the sense that if 
$f,g \in \CX_\eps$ and $(\iota_\eps f)(\phi) = (\iota_\eps g)(\phi)$ for every $\phi\in \CC^r$ supported in 
$\K_\eps$, then
$\|f-g\|_{\alpha;\K_\eps;z;\eps} = 0$.
\item[3.] 
Uniformly over all $f\in\CX_\eps$, $z\in\R^d$, $\alpha\in\R$, and $\phi \in \Phi$, one has the bound
\begin{equation}
\label{eq:relationtoseminorm}
|(\iota_\eps f)(\phi_z^{\eps})|\lesssim \eps^{\alpha}\| f\|_{\alpha; [\phi_z^\eps]; z;\eps}.
\end{equation}
\item[4.] For any function $\Gamma\colon \R^d \times \R^d \to G$, there exists 
a family of extended seminorms 
$\$\cdot\$_{\gamma;\K;\eps}$ on the space of functions $f:\R^d\to \CT_{<\gamma}$ with $\gamma\in\R$, $\K\Subset\R^d$ and $\eps\in (0,1]$.
For the same set of indices, and given two functions $\Gamma_1,\Gamma_2:\R^d\times\R^d\to G$, there is a family of
seminorms $\$\cdot;\cdot\$_{\gamma;\K;\eps}$ on the space of pairs $(f,g)$, with $f,g:\R^d\to\CT_{<\gamma}$.
Both families of seminorms are assumed to depend only on the values of $f$ (and $g$ respectively) in a neighbourhood of size $\bfc\eps$ around $\K$, for $\bfc\geq 0$ a fixed constant.
\end{claim}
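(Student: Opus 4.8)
Definition~\ref{def:admissible} records data subject to axioms rather than asserting a proposition, so what there is to verify — and what I would check before using it — is that the four clauses are mutually consistent and non-vacuous, and in particular that they specialise to the continuous theory of~\cite{Regularity}. The plan is to exhibit the canonical example and read off each axiom against it.

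First I would take $\CX_\eps=\CD'(\R^d)$ for every $\eps\in(0,1]$, with $\iota_\eps$ the identity inclusion, and equip it, for each compact $\K_\eps$ of diameter at most $2\eps$, with the seminorm
\[
\|f\|_{\alpha;\K_\eps;z;\eps}:=\sup\bigl\{\lambda^{-\alpha}\,|(\iota_\eps f)(\phi_y^\lambda)|\;:\;\phi\in\Phi,\ 0<\lambda\le\eps,\ [\phi_y^\lambda]\subset\K_\eps\bigr\},
\]
which is manifestly local in the sense of clause~2 because it only probes $f$ against test functions supported inside $\K_\eps$. The second step is clause~3: with this choice, taking $\lambda=\eps$, $y=z$ and recalling that $[\phi_z^\eps]\subset\CB_\s(z,\eps)$ has diameter at most $2\eps$ (so it is itself a legitimate ``$\K_\eps$''), one reads off~\eqref{eq:relationtoseminorm} with implicit constant $1$. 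The third step is clause~4: here I would simply transplant the seminorms $\$\cdot\$_{\gamma;\K}$ on modelled distributions and $\$\cdot;\cdot\$_{\gamma;\K}$ on pairs from~\cite[Sec.~3 and Sec.~6]{Regularity}, which depend only on the values of $f$ (and $g$) on $\K$ itself, so that $\bfc=0$ works; the only change is that the dependence previously carried by the model's structure-group element is now carried by the abstract datum $\Gamma$.

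The delicate point, and the one I expect to be the main obstacle, is the interplay between clauses~2 and~3: clause~3 must hold \emph{uniformly} in $f$, $z$, $\alpha$ and $\phi$, with the seminorm on the right evaluated on the $\eps$-scale set $[\phi_z^\eps]$, and one has to check that such a family of seminorms can coexist with the locality requirement of clause~2 without over-constraining the behaviour below scale $\eps$. The displayed formula shows coexistence is possible, hence the definition is coherent; the genuinely new freedom — to be exploited in later sections — is that below scale $\eps$ one may instead tailor $\|\cdot\|_{\alpha;\K_\eps;z;\eps}$ to the discrete structure of $\CX_\eps$ (grid functions of mesh $\eps$, smooth functions, and so on), with~\eqref{eq:relationtoseminorm} supplying just enough rigidity to transfer estimates to scales above $\eps$.

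Finally, I would stress that the real difficulty does not lie in Definition~\ref{def:admissible} itself but in the later statements that the operations one needs — the reconstruction map $\CR^\eps$, the abstract convolution $\CK_\gamma^\eps$, and products — can be performed using \emph{only} the information these seminorms encode, treating everything below scale $\eps$ as a black box; checking the coherence of this definition is essentially a warm-up for those arguments.
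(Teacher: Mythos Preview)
Your reading is correct: Definition~\ref{def:admissible} is axiomatic data, not a proposition, and the paper treats it exactly as you do --- by exhibiting concrete instances rather than proving anything. Your ``transparent case'' $\CX_\eps=\CD'(\R^d)$ with the scale-$\eps$ H\"older-type seminorm is precisely the paper's Example~4, and your verification of clauses~2--4 matches how the paper sets it up (the paper in fact lists four examples --- purely discrete, semidiscrete, continuous, transparent --- and leaves the checks as an exercise). One small discrepancy: in your seminorm you take the supremum over all centres $y$ with $[\phi_y^\lambda]\subset\K_\eps$, whereas the paper's transparent case fixes the centre at the index point $z$; both choices satisfy the axioms, but the paper's version makes the $z$-dependence in the notation $\|\cdot\|_{\alpha;\K_\eps;z;\eps}$ non-redundant. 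Your closing remark that the substantive work lies downstream (reconstruction, convolution, products) is exactly the paper's perspective.
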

\end{definition}


\begin{remark}
The above way of introducing the seminorms in the fourth item may be a bit misleading. As we will see in the examples below, they indeed depend on the functions $\Gamma,\Gamma_1$, and $\Gamma_2$ introduced above, so that the correct way of denoting them would be
$\$\cdot\$_{\gamma;\Gamma;\K;\eps}$ and $\$\cdot;\cdot\$_{\gamma;\Gamma_1,\Gamma_2,\K;\eps}$. However, for the sake of 
readability and since they will always be clear from context, we omit these additional indices.
\end{remark}
\begin{remark}
Assume that in the fourth item in the above definition one has $\Gamma_1=\Gamma_2$. In practice one then often has the relation $\$f;g\$_{\gamma;\K;\eps} = \$f-g\$_{\gamma;\K;\eps}$. See for instance the four examples introduced further below,
where this is indeed the case.
\end{remark}

\begin{remark}
In practice we are not interested in all possible functions $\Gamma_1$ and $\Gamma_2$ as above, but in specific sequences of maps $(\Gamma^\eps)_{\eps \in (0,1]}$ that depend on the underlying model, see Definition~\ref{def:model} for an explanation of that terminology.
\end{remark}
To illustrate the setup described above, we mention the following four examples to which we will refer frequently in this work. We leave it as an exercise to verify that these are indeed examples of discretisations in the above sense.
In all examples below $\K_\eps$ denotes a compact subset of $\R^d$ with diameter bounded by $2\eps$.

\begin{itemize}
\item[1.]\textbf{The purely discrete case.} Let $\Lambda_\eps^d\subset\R^d$ be a locally finite set 
such that, for some constants $c, C \in (0,1]$, it holds that for every $z \in \Lambda_\eps^d$,
$\inf_{z \neq z' \in \Lambda_\eps^d} \|z-z'\|_\s \in [c\eps,C\eps]$.
We then set $\CX_\eps = \R^{\Lambda_\eps^d}$ with 
\begin{equ}
\label{eq:discrXeps}
\bigl(\iota_\eps f\bigr)(\phi) = \eps^{|\s|}\sum_{z \in \Lambda_\eps^d} f(z)\,\phi(z)\;,\quad
\|f\|_{\alpha;\K_\eps;z;\eps} = \eps^{-\alpha} \sup_{y\in\K_\eps\cap\Lambda_\eps^d} |f(y)|\;.
\end{equ}
Moreover, we set
\begin{equation}
\$ f\$_{\gamma;\K;\eps}= \sup_{y,z\in\K \cap \Lambda_\eps^d\,:\, \|y-z\|_\s<\eps}\sup_{\beta<\gamma} \eps^{\beta-\gamma}
\|f(z)-\Gamma_{zy}^{\eps}f(y)\|_{\beta}\;,
\end{equation}
and \begin{equation}
\$f;\bar{f}\$_{\gamma;\K;\eps}=  \sup_{y,z\in\K\cap\Lambda_\eps^d\,:\, \|y-z\|_\s<\eps}\sup_{\beta<\gamma} \eps^{\beta-\gamma}
\|f(z)-\Gamma_{zy}^{\eps}f(y)-\bar{f}(z)+\bar\Gamma_{zy}^\eps\bar{f}(y)\|_{\beta}\;,
\end{equation}
for all $\gamma\in\R$, $\eps\in (0,1]$, $\K\Subset\R^d$ and $f,\bar{f}:\R^d\to \CT_{<\gamma}$.
\item[2.]\textbf{The semidiscrete case.} Let $\Lambda_\eps^{d-1}\subset\R^{d-1}$ be as above. 
We let $\CX_\eps= L^\infty(\R \times \Lambda_\eps^{d-1})$ and we write 
$\s=(\s_1,\bar\s)$. The inclusion map $\iota_\eps$ and the family of (extended) seminorms 
$\|\cdot\|_{\alpha;\K_\eps;z;\eps}$ are then defined by
\begin{equation}
\begin{aligned}
(\iota_\eps f)(\varphi) &= \eps^{|\bar{\s}|}\sum_{x\in\Lambda_\eps^{d-1}}\int_{\R}
f(t,x)\varphi(t,x)\, dt, \\
\|f\|_{\alpha;\K_\eps;z; \eps}&=\eps^{-\alpha}\sup_{y\in\K_\eps\cap\R\times\Lambda_\eps^{d-1}}|f(y)|.
\end{aligned}
\end{equation}
Furthermore,
\begin{equation}
\$ f\$_{\gamma;\K;\eps}=\sup_{\substack{y,z\in\K\cap\R\times\Lambda_\eps^{d-1}\\ \|y-z\|_\s<\eps}}\sup_{\beta <\gamma}
\eps^{\beta-\gamma}\| f(z)-\Gamma^{\eps}_{zy}f(y)\|_{\beta},
\end{equation} and
\begin{equation}
\$f;\bar{f}\$_{\gamma;\K;\eps}= \sup_{\substack{y,z\in\K\cap\R\times\Lambda_\eps^{d-1}\\ \|y-z\|_\s<\eps}}\sup_{\beta <\gamma}
\eps^{\beta-\gamma}\| f(z)-\Gamma^{\eps}_{zy}f(y)-\bar f(z)+\bar\Gamma_{zy}^\eps\bar f(y)\|_{\beta},
\end{equation}
for all $\gamma\in\R$, $\eps\in(0,1]$, $\K\Subset\R$ and $f,\bar{f}:\R^d\to \CT_{<\gamma}$.
\item[3.]\textbf{The continuous case.} In this case, we set $\CX_\eps=\CC(\R^d,\R)$ with $\iota_\eps$
given by the canonical identification between continuous functions and distributions.
We set 
\begin{equ}[e:defNormcont]
\|f\|_{\alpha;\K_\eps;z;\eps} = \eps^{-\alpha} \sup_{y\in\K_\eps} |f(y)|\;,
\end{equ}
as well as
\begin{equ}
\$ f\$_{\gamma;\K;\eps}= \sup_{y,z\in\K, \|y-z\|_\s<\eps}\sup_{\beta<\gamma} \eps^{\beta-\gamma}
\|f(z)-\Gamma^{\eps}_{zy}f(y)\|_{\beta}\;,
\end{equ}
and 
\begin{equation}
\$f;\bar{f}\$_{\gamma;\K;\eps}=  \sup_{y,z\in\K, \|y-z\|_\s<\eps}\sup_{\beta<\gamma} \eps^{\beta-\gamma}
\|f(z)-\Gamma^{\eps}_{zy}f(y)-\bar f(z)+\bar\Gamma_{zy}^\eps\bar f(y)\|_{\beta}\;,
\end{equation}
for $\gamma\in\R,\K\Subset \R^d, \eps\in(0,1]$ and $f,\bar f:\R^d\to \CT_{<\gamma}$.
\item[4.]\textbf{The transparent case.}
We let $\CX_\eps=\CD'(\R^d)$ with $\iota_\eps$ given by the identity. Let $\lambda\in (0,\eps]$ and $z\in\K_\eps$, we write $\Phi_{\eps,z}^{\lambda}$ for the set of all $\varphi\in\Phi$ such that $[S_{\s,z}^{\lambda}\varphi]\subset \K_\eps$. We endow $\CX_\eps$ with 
\begin{equ}
\|f\|_{\alpha;\K_\eps;z;\eps} = \sup_{\lambda \in (0,\eps]}\sup_{\phi \in \Phi_{\eps,z}^{\lambda}} \lambda^{-\alpha} |f(\phi_z^\lambda)|\;.
\end{equ}
Finally, we define 
\begin{equation}
\$ f\$_{\gamma;\K;\eps}= \sup_{y,z\in\K, \|y-z\|_\s<\eps}\sup_{\beta<\gamma}
\frac{\|f(z)-\Gamma^{\eps}_{zy}f(y)\|_{\beta}}{\|y-z\|_{\s}^{\gamma-\beta}},
\end{equation}
and
\begin{equation}
\$f;\bar f\$_{\gamma;\K;\eps}= \sup_{y,z\in\K, \|y-z\|_\s<\eps}\sup_{\beta<\gamma}
\frac{\|f(z)-\Gamma^{\eps}_{zy}f(y)-\bar f(z)+\bar\Gamma_{zy}^\eps\bar f(y)\|_{\beta}}{\|y-z\|_{\s}^{\gamma-\beta}},
\end{equation}
for $\gamma\in\R,\K\Subset \R^d, \eps\in(0,1]$ and $f,\bar{f}:\R^d\to \CT_{<\gamma}$.
\end{itemize}
\begin{remark}
Note that in the first three examples the extended seminorms  $\|\cdot\|_{\alpha;\K_\eps;z;\eps}$ are always finite and thus proper seminorms. This is not true anymore in the last example. The fact that in the former examples the index $z$ does not appear in the respective definitions is not a typo. Thus when considering a concrete equation that falls into the framework of one of these three examples one may also simply omit this index.   
\end{remark}
\begin{remark}
The transparent case is actually the setting of the original theory of regularity structures developed in~\cite{Regularity}. In particular it is possible to check that the various assumptions stated in this article are satisfied in the original setup.
Another example in which our setup could be applied is the space-discrete $\Phi^4_3$-model studied in~\cite{MatetskiDiscrete}, which falls into the semidiscrete case.
In the forthcoming article~\cite{PamExclusion} we investigate the  parabolic Anderson model in discrete space, where the noise term is given by the fluctuation field of the simple symmetric exclusion process, and we let the mesh of the space tend to zero. The framework of the present article allows to study that problem (it indeed served as the original motivation for this article).
\end{remark}
We denote by $\CL(\CT,\CX_\eps)$ the space of all linear maps from $\CT$ to $\CX_\eps$. We then have the following definition.

\begin{definition}
\label{def:model}
A discrete model for a given regularity structure $\TT=(A,\CT,G)$ consists of a collection of maps $z \mapsto \Pi^{\eps}_z\in \CL(\CT, \CX_\eps)$ and $(x,y) \mapsto \Gamma_{xy}^\eps \in G$ such that
\begin{claim}
\item $\Gamma^\eps_{zz}= \id$, the identity operator, and $\Gamma^\eps_{xy}\Gamma^\eps_{yz}=\Gamma^\eps_{xz}$ for $x,y,z\in\R^d$.
\item One has $\Pi_z^\eps= \Pi_y^\eps\Gamma^\eps_{yz}$ for all $y,z\in\R^d$.
\end{claim}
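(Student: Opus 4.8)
The plan is to produce a discrete model --- i.e., a pair $(\Pi^\eps,\Gamma^\eps)$ satisfying the two structural identities of Definition~\ref{def:model} --- by transporting a model $(\Pi,\Gamma)$ for the regularity structure $\TT$ through the discretisation, which is the construction relevant in applications. Concretely, one keeps the structure group $G$ unchanged, sets $\Gamma^\eps_{xy}:=\Gamma_{xy}$, and defines $\Pi^\eps_z:=P_\eps\,\Pi_z\in\CL(\CT,\CX_\eps)$, where $P_\eps$ is the linear ``discretisation map'' attached to the example at hand --- the identity in the transparent case, a mollification in the continuous case, and a mollify-then-restrict operation tied to the scale $\eps$ in the two grid-based cases --- so that it is genuinely defined on the (possibly distributional) objects $\Pi_z\tau$. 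With this choice, both conditions become formal consequences of the linearity of $P_\eps$ together with the corresponding properties of $(\Pi,\Gamma)$.

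For the first item I would transport the cocycle structure: since $\Gamma^\eps_{xy}=\Gamma_{xy}$ verbatim and $(\Pi,\Gamma)$ is a model, one has $\Gamma^\eps_{zz}=\Gamma_{zz}=\id$ and $\Gamma^\eps_{xy}\Gamma^\eps_{yz}=\Gamma_{xy}\Gamma_{yz}=\Gamma_{xz}=\Gamma^\eps_{xz}$ as elements of $G$, and nothing further is needed. (If in some application $\Gamma^\eps$ is the image of $\Gamma$ under a map $G\to G$ rather than $\Gamma$ itself, the only extra point is to check that this map is a group homomorphism.)

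For the second item, fix $y,z\in\R^d$ and $\tau\in\CT$. Linearity of $P_\eps$ together with the model identity $\Pi_z=\Pi_y\Gamma_{yz}$ gives
\begin{equation}
\Pi^\eps_z\tau=P_\eps\bigl(\Pi_z\tau\bigr)=P_\eps\bigl(\Pi_y\Gamma_{yz}\tau\bigr)=\Pi^\eps_y\bigl(\Gamma_{yz}\tau\bigr)=\bigl(\Pi^\eps_y\Gamma^\eps_{yz}\bigr)\tau\,.
\end{equation}
Since this is already an identity between elements of $\CX_\eps$, no further argument --- in particular no appeal to the seminorms or to the locality axiom --- is needed.

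The algebra is thus inherited essentially for free, so the genuine obstacle lies elsewhere. First, one must make the construction legitimate: since each $\Pi_z\tau$ is in general a genuine distribution, $P_\eps$ cannot be naive pointwise sampling and must build in a mollification at scale $\eps$, after which one checks that $\Pi^\eps_z\tau$ does land in $\CX_\eps$ and is compatible with $\iota_\eps$ through the locality axiom in item~2 of Definition~\ref{def:admissible}. Secondly --- and this is where the real work sits --- one must verify the analytic estimates (expressed through the seminorms of Definition~\ref{def:admissible}) that turn $(\Pi^\eps,\Gamma^\eps)$ into a usable discrete model rather than merely an algebraically consistent pair. Finally, if one is instead simply handed a candidate pair $(\Pi^\eps,\Gamma^\eps)$ and asked to verify Definition~\ref{def:model} directly, the task collapses to unwinding the explicit formulae: reading off $\Gamma^\eps_{zz}=\id$, the telescoping $\Gamma^\eps_{xy}\Gamma^\eps_{yz}=\Gamma^\eps_{xz}$, and the analogous identity for $\Pi^\eps$.
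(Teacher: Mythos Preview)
The two bulleted conditions you are trying to prove are not a theorem at all: they are the defining axioms in Definition~\ref{def:model}. A discrete model is, \emph{by definition}, a pair $(\Pi^\eps,\Gamma^\eps)$ for which these identities (together with the subsequent analytic bounds) hold. Consequently the paper contains no proof of this statement, and there is nothing to compare your argument against.

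Your proposal is therefore a category error: you are giving an existence construction --- ``here is one way to manufacture a pair satisfying the axioms'' --- in response to what is merely a list of requirements. That said, the construction you sketch (set $\Gamma^\eps:=\Gamma$ and $\Pi^\eps_z:=P_\eps\Pi_z$ for a linear discretisation map $P_\eps$) is a perfectly sensible way to produce examples, and your observation that the algebraic identities then follow trivially from linearity of $P_\eps$ and the model axioms for $(\Pi,\Gamma)$ is correct. But this is tangential to the statement itself, which asks nothing more than that the reader accept these conditions as part of the definition.
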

Furthermore, for any compact set $\K\subset\R^d$ and every $\gamma>0$ one has the analytical estimates
\begin{equ}
\label{eq:Pi}
\bigl|\bigl(\iota_\eps\Pi^{\eps}_z \tau\bigr)(\CS_{\s,z}^{\lambda}\phi)\bigr| \lesssim \|\tau\|\lambda^{|\tau|}\;,\quad 
\|\Pi^{\eps}_z \tau\|_{|\tau|;\K_\eps;z;\eps} \lesssim \|\tau\|\;,
\end{equ} and, setting $f_z^{\tau, \Gamma^\eps}(y) = \Gamma^{\eps}_{yz}\tau-\tau$,
\begin{equation}
\label{eq:Gamma}
\|\Gamma^\eps_{zz'}\tau\|_m\lesssim \|\tau\|\|z-z'\|_\s^{|\tau|-m}\;,\quad
\$ f_z^{\tau, \Gamma^\eps}\$_{|\tau|;\K; \eps}\lesssim \|\tau\|,
\end{equation}
uniformly over $\lambda \in (\eps,1]$, test functions $\phi\in\Phi$, all homogeneous $\tau\in \CT$ with
$|\tau| < \gamma$, all $m < |\tau|$, and uniformly over $z,z'\in\K$ such that $\|z-z'\|_\s\in (\eps,1]$.
In the second bound in~\eqref{eq:Pi} we moreover considered compact sets $\K_\eps$ of diameter bounded by $2\eps$.
\end{definition}
\begin{remark}
In the above definition, the proportionality constants are allowed to depend on $\eps$. In practice however one is interested 
in obtaining convergence to a limit as $\eps$ tends to zero, and this usually requires them to be independent of $\eps$
in order to obtain useful statements. 
The same remark applies to all estimates stated in this article. In particular if in any of the assumptions stated in this 
article the corresponding proportionality constants do not depend on $\eps$, then the same is true for any derived estimate.	
\end{remark}

\begin{remark}
For a compact set $\K\subset \R^d$ we let $\|\Pi^\eps\|_{\gamma;\K}^{(\eps)}$ and $\|\Gamma^\eps\|_{\gamma;\K}^{(\eps)}$ be the smallest proportionality constants such that~\eqref{eq:Pi} and~\eqref{eq:Gamma} hold respectively.
Let $Z^\eps=(\Pi^\eps,\Gamma^\eps)$ be a model, we set $\$Z^\eps\$_{\gamma,\K}^{(\eps)}=\|\Pi^\eps\|_{\gamma;\K}^{(\eps)}+ 
\|\Gamma^\eps\|_{\gamma;\K}^{(\eps)}$ and for a second model $\bar{Z}^\eps=(\bar{\Pi}^\eps,\bar{\Gamma}^\eps)$ we denote
$\$Z^\eps;\bar{Z}^\eps\$_{\gamma;\K}^{(\eps)}= \|\Pi^\eps-\bar{\Pi}^\eps\|_{\gamma;\K}^{(\eps)}+ 
\|\Gamma^\eps;\bar{\Gamma}^\eps\|_{\gamma;\K}^{(\eps)}$. Here, $\|\Gamma^\eps;\bar{\Gamma}^\eps\|_{\gamma;\K}^{(\eps)}$ denotes the sum of the suprema of $\|(\Gamma_{zz'}^{\eps}-\bar{\Gamma}_{zz'}^{\eps})\tau\|_m/\|z-z'\|_\s^{|\tau|-m}$
and $\$f_z^{\tau, \Gamma^\eps} ; f_z^{\tau, \bar \Gamma^\eps}\$_{|\tau|;\K; \eps}$ over the same set as in Definition~\ref{def:model}.
\end{remark}
\begin{remark}

\label{rem:comparecontdiscr}
In~\cite{Regularity} the definition of a model required $\Pi^\eps$ to map into the space of distributions $\CD'(\R^d)$ 
(this means $\CX_\eps=\CD'(\R^d)$ in the current framework). Moreover, only the first inequalities in~\eqref{eq:Pi} 
and~\eqref{eq:Gamma} were imposed. They were however imposed for all $\lambda\in (0,1]$. We refer to such models as 
``continuous models'' and we denote them by $(\Pi,\Gamma)$. The ``transparent case'' above shows that the notion of 
``discrete model'' is a (strict) generalisation of that of a continuous model. A natural way to compare a 
continuous model to a discrete model $(\Pi^\eps,\Gamma^\eps)$ is via the quantities
\begin{equation}
\label{eq:comparecontdisc}
\begin{aligned}
&\|\Pi-\Pi^\eps\|_{\gamma;\K;\geq \eps}=\sup_{\lambda\in (\eps,1]}\sup_{\substack{\ell\in A\\ \ell <\gamma}}\sup_{\tau\in\CT_\ell}\sup_{\varphi \in\Phi}\sup_{z\in\K}\lambda^{-|\tau|}\bigl|\bigl(\Pi_z\tau- \iota_\eps\Pi^{\eps}_z\tau\bigr)(\CS_{\s,z}^{\lambda}\phi)\bigr|\;,\quad\mbox{and}\\
&\|\Gamma-\Gamma^{\eps}\|_{\gamma;\K;\geq \eps}= 
\sup_{\substack{\ell\in A\\ \ell <\gamma}}\sup_{m<\ell}\sup_{\tau\in\CT_\ell}\sup_{\substack{z,z'\in\K\\ \|z-z'\|_\s\in (\eps,1]}}\|z-z'\|_\s^{m-\ell}\|\Gamma_{zz'}\tau-\Gamma_{zz'}^\eps\tau\|_m.
\end{aligned}
\end{equation}
\end{remark}
With Remark~\ref{rem:comparecontdiscr} at hand we have the following definition.
\begin{definition}
\label{def:comparecontdics}
Let $Z=(\Pi,\Gamma)$ be a continuous model and let $Z^\eps=(\Pi^\eps,\Gamma^\eps)$ be a discrete model. 
For every compact set $\K\subset\R^d$, we define a distance between $\Pi$ and $\Pi^\eps$ via
\begin{equation}
\| \Pi;\Pi^\eps\|_{\gamma;\K}= \|\Pi-\Pi^\eps\|_{\gamma;\K;\geq \eps}+ \sup_{\ell, z, \tau,\K_\eps} \|\Pi_z^\eps\tau\|_{|\tau|;\K_\eps;z;\eps}+
\sup_{\lambda,\varphi,\ell,\tau,z} \lambda^{-|\tau|}\bigl|\bigl(\Pi_z\tau\bigr)(\CS_{\s,z}^{\lambda}\phi)\bigr|,
\end{equation}
and between $\Gamma$ and $\Gamma^\eps$ via
\begin{equation}
\|\Gamma;\Gamma^\eps\|_{\gamma;\K}= \|\Gamma-\Gamma^{\eps}\|_{\gamma;\K;\geq\eps} + \sup_{z,\tau} \$ f_z^{\tau, \Gamma^\eps}\$_{|\tau|;\K; \eps}+ \sup_{m,z,z',\tau} \|z-z'\|_\s^{m-|\tau|}\|\Gamma_{zz'}\tau \|_{m}.
\end{equation}
Here, when referring to the discrete model $Z^\eps$ the supremum is taken over the same set of indices as in Definition~\ref{def:model}, whereas when referring to the model $Z$ the supremum is additionally taken over $\lambda\in(0,\eps]$ and $\|z-z'\|_\s\leq \eps$, respectively.
We finally define the distance between $Z$ and $Z^\eps$ by
\begin{equation}
\$Z;Z^\eps\$_{\gamma;\K}= \|\Pi;\Pi^\eps\|_{\gamma;\K}+ \|\Gamma;\Gamma^\eps\|_{\gamma;\K}.
\end{equation}
\end{definition}

We have the following definition.
\begin{definition}
\label{def:Dgamma}
Let $\gamma\in\R$ and fix a discrete model $(\Pi^{\eps},\Gamma^{\eps})$. The space $\CD_\eps^\gamma$ consists of all maps $f:\R^d\to \CT_{<\gamma}$ such that for every compact set $\K\subset\R^d$ one has
\begin{equation}
\label{eq:Dgamma}
\$ f \$^{(\eps)}_{\gamma;\K} \overset{\text{def}}{=}  
\sup_{\substack{(y,z)\in \K\\
\eps \leq \|y-z\|_\s\leq 1}}\sup_{\beta<\gamma} \frac{\|f(z)-\Gamma^{\eps}_{zy}f(y)\|_{\beta}}{\|y-z\|_\s^{\gamma-\beta}}
+\$ f\$_{\gamma;\K;\eps} <\infty.
\end{equation}
We say that $f\in\CD_\eps^{\gamma}$ is a modelled distribution.
\end{definition}
\begin{remark}
Note that we could have equipped the $\CD_\eps^{\gamma}$-spaces with a norm that involves the expression
\begin{equation}
\sup_{z\in\K}\sup_{\beta <\gamma}\|f(z)\|_{\beta},
\end{equation}
as was done in~\cite{Regularity}.
However, since this term will not be part of any bound that we state in this article, we decided to omit this term in Definition~\ref{def:Dgamma}.
\end{remark}
\begin{remark}
In some situations we only consider elements of $\CD^\gamma_\eps$ taking values in some given sector $V$ of $\CT$. In this case we also use the notation $\CD^\gamma_\eps(V)$. In cases where $V$ is of regularity $\alpha$ we also write $\CD_{\alpha,\eps}^{\gamma}$. Occasionally we want to emphasize the dependence on a given model $Z^\eps=(\Pi^\eps,\Gamma^\eps)$, and we use the notation $\CD^\gamma_\eps(Z^\eps)$ or $\CD^\gamma_\eps(\Gamma^\eps)$ to do so.
\end{remark}
\begin{remark}
Given two models $(\Pi^\eps,\Gamma^\eps)$, $(\bar{\Pi}^\eps,\bar{\Gamma}^\eps)$ and two functions $f\in\CD^\gamma_\eps(\Gamma^\eps)$ and $\bar{f}\in\CD^\gamma_\eps(\bar{\Gamma}^\eps)$ we define a distance between $f$ and $\bar{f}$ via
\begin{equation}
\$f;\bar{f}\$_{\gamma;\K}^{(\eps)}= 
 \sup_{\substack{(y,z)\in \K\\
\eps \leq \|y-z\|_\s\leq 1}}\sup_{\beta<\gamma} \frac{\|f(z)-\bar{f}(z)-\Gamma^{\eps}_{zy}f(y)+\bar{\Gamma}^\eps_{zy}\bar{f}(y)\|_{\beta}}{\|y-z\|_\s^{\gamma-\beta}}
+\$f;\bar{f}\$_{\gamma;\K;\eps}.
\end{equation}
\end{remark}
\begin{remark}
\label{rem:Dgammacont}
Fix a continuous model $(\Pi,\Gamma)$. One can define the space of functions $\CD^\gamma(\Gamma)$ as all maps $f:\R^d\to\CT_{<\gamma}$ such that
\begin{equation}
\label{eq:Dgammacont}
\$ f\$_{\gamma;\K}:= \sup_{\substack{(y,z)\in \K\\
\|y-z\|_\s\leq 1}}\sup_{\beta<\gamma} \frac{\|f(z)-\Gamma_{zy}f(y)\|_{\beta}}{\|y-z\|_\s^{\gamma-\beta}} <\infty.
\end{equation}
Note that this definition differs from~\cite[Def.~3.1]{Regularity}. However, the finiteness of the norm in~\eqref{eq:Dgammacont} implies the finiteness of the corresponding norm in~\cite{Regularity}, so that the current setup indeed agrees with the one in~\cite{Regularity}. It is also consistent with \eqref{eq:Dgamma} in the sense that the
two expressions agree in the transparent case.
\end{remark}
\begin{definition}
\label{def:Dgammacontdisc}
Given a continuous model $(\Pi,\Gamma)$ and a discrete model $(\Pi^\eps,\Gamma^\eps)$ and two functions $f\in\CD^{\gamma}(\Gamma)$ and $f^\eps\in \CD_\eps^{\gamma}(\Gamma^\eps)$, we define the distance between $f$ and $f^\eps$ via
\begin{equation}
\label{eq:Dgammacontdisc}
\begin{aligned}
\$ f;f^\eps\$_{\gamma;\K}=
 &\sup_{\substack{(y,z)\in \K\\
\|y-z\|_\s < \eps}}\sup_{\beta<\gamma} \frac{\|f(z)-\Gamma_{zy}f(y)\|_{\beta}}{\|y-z\|_\s^{\gamma-\beta}}
+ \$ f^\eps \$_{\gamma;\K;\eps}\\
&+ \sup_{\substack{(y,z)\in \K\\
\eps\leq \|y-z\|_\s\leq 1}}\sup_{\beta<\gamma} \frac{\|f(z)-f^\eps(z)-\Gamma_{zy}f(y)+\Gamma_{zy}^\eps f^\eps(y)\|_{\beta}}{\|y-z\|_\s^{\gamma-\beta}}.
\end{aligned}
\end{equation}
In plain words, at scales larger than $\eps$ we compare $f$ and $f^\eps$ in the natural way, and at scales smaller than $\eps$ we simply add the bits describing $f$ and $f^\eps$ at these scales.
\end{definition}

\noindent
\textbf{Convention:} For the rest of this article the symbol $\K_\eps$ is reserved for a compact subset of $\R^d$ with diameter bounded by $2\eps$.


\section{The reconstruction theorem}
\label{S3}

In this section we prove that given an element $f$ in $\CD_\eps^{\gamma}$ and an operator $\CR^\eps:\CD_\eps^{\gamma}\to\CX_\eps$ such that $\CR^\eps f$ is close to $\Pi_z^\eps f(z)$ on a local scale, then they are also close globally.  
The significance of it is that $\Pi_z^\eps f(z)$ is a local object, whereas $\CR^\eps f$ can be thought of as a global object.
The following two assumptions are key for this and they are assumed to hold throughout this article.
\begin{assumption}
\label{a:rec}
Let $\gamma>0$ and fix a discrete model $(\Pi^\eps,\Gamma^\eps)$. We assume that there is a linear map $\cR^{\eps}:\CD_\eps^\gamma(\Gamma^\eps)\to \CX_\eps$ such that for every $z\in\R^d$ and every compact set $\K_\eps\subset \R^d$ of diameter at most $2\eps$ containing $z$,
\begin{equation}
\label{eq:rec}
\|\cR^{\eps} f - \Pi^{\eps}_z f(z)\|_{\gamma;\K_\eps;z;\eps}\lesssim
\|\Pi^\eps\|^{(\eps)}_{\gamma;\bar\K_\eps} \$ f\$_{\gamma;\K_\eps;\eps},
\end{equation} 
locally uniformly over all $z$, over all such compact sets $\K_\eps$, and $\eps>0$. 
Any map $\CR^\eps$ satisfying~\eqref{eq:rec} is called a reconstruction operator.
\end{assumption}

\begin{remark}
Recall that the norm $\$\cdot\$_{\gamma;\K_\eps;\eps}$ is allowed to depend on a neighbourhood of size $\bfc\eps$ of $\K_\eps$ for some fixed value of $\bfc$. In practice it is often~\eqref{eq:rec} that determines the choice of $\bfc$, i.e., one first chooses a candidate for $\CR^\eps$ and then one verifies for which choice of $\bfc$, given a reasonable candidate for $\$\cdot\$_{\gamma;\K_\eps;\eps}$, a bound of the type~\eqref{eq:rec} is satisfied.
\end{remark}
\begin{remark}\label{rem:rec}
	A common choice for $\CR^\eps$ is given by $\CR^\eps f(z)= (\Pi_z^\eps f(z))(z)$, provided of course that this is a 
	meaningful expression, which is the case in the first three examples given in the previous section.
	To verify Assumption~\ref{a:rec} in the purely discrete case, fix $z\in\Lambda_\eps^d$ and a compact set $\K_\eps$ as required above. Then for every $y\in\K_\eps$, we can write
	\begin{equation}
	\begin{aligned}
	\eps^{-\gamma}|(\Pi_y^\eps f(y))(y)-(\Pi_z^\eps f(z))(z)|
	&=\eps^{-\gamma}|\Pi_y^\eps[f(y)-\Gamma_{yz}^\eps f(z)](y)|\\
	&\leq\sum_{\ell <\gamma}\eps^{-\gamma}|\Pi_y^\eps \CQ_\ell [f(y)-\Gamma_{yz}^\eps f(z)](y)|\\
	&\lesssim \|\Pi^\eps\|^{(\eps)}_{\gamma;\bar\K_\eps}
	\sum_{\ell <\gamma}\eps^{-\gamma +\ell}\|f(y)-\Gamma_{yz}^\eps f(z)\|_\ell,
	\end{aligned}
	\end{equation}
	and, since there are only finitely many terms of homogeneity smaller than $\gamma$,~\eqref{eq:rec} readily follows from the definition of $\$\cdot\$_{\gamma;\K_\eps;\eps}$.
	Essentially the same computation can be done in the semidiscrete and continuous case.
	In the transparent case, it is a consequence of~\cite[Theorem 3.10]{Regularity} that the reconstruction operator satisfies Assumption~\ref{a:rec}.
\end{remark}

When comparing reconstruction operators corresponding to different models, we also need to make the following assumption.
\begin{assumption}
\label{a:reccomparedisc}
Fix two discrete models $(\Pi^\eps,\Gamma^\eps)$ and $(\bar\Pi^\eps,\bar\Gamma^\eps)$ with associated reconstruction operators $\CR^\eps$ and $\bar\CR^\eps$, and let $f\in\CD_\eps^{\gamma}(\Gamma^\eps)$ and $\bar f\in\CD_\eps^{\gamma}(\bar\Gamma^\eps)$. We assume that for every $z\in\R^d$ and every compact set $\K_\eps$ of diameter at most $2\eps$ containing $z$,
\begin{equation}
\label{eq:reccomparedisc}
\begin{aligned}
\|\cR^{\eps} f - \Pi^{\eps}_z f(z)-&\bar\CR^\eps\bar f(z) + \bar\Pi^{\eps}_z \bar f(z)\|_{\gamma;\K_\eps;z;\eps}\\
&\lesssim
\|\bar \Pi^\eps\|^{(\eps)}_{\gamma;\bar\K_\eps} \$ f;\bar f\$_{\gamma;\K_\eps;\eps} + \|\Pi^\eps-\bar\Pi^\eps\|_{\gamma;\bar\K_\eps}^{(\eps)}\$ f\$_{\gamma;\K_\eps;\eps}
\end{aligned}
\end{equation} 
locally uniformly over all $z$, over all such compact sets $\K_\eps$, and $\eps>0$. 
Fix a continuous model $(\Pi,\Gamma)$ with associated reconstruction operator $\CR$, $f\in\CD^{\gamma}(\Gamma)$, and $f^\eps\in\CD_\eps^{\gamma}(\Gamma^\eps)$. We assume that for every $\eta\in \Phi$, 
\begin{equation}
\begin{aligned}
\big| [\iota_\eps(\CR^\eps f^\eps -&\Pi_z^\eps f^\eps (z))- (\CR f - \Pi_z f(z))](\eta_z^{\eps})\big |\\
&\lesssim \eps^{\gamma}\big[\|\Pi^\eps\|_{\gamma;\overline{[\eta_z^{\eps}]}}^{(\eps)}\$f;f^\eps\$_{\gamma;[\eta_z^{\eps}]}
+ \|\Pi;\Pi^\eps\|_{\gamma;\overline{[\eta_z^{\eps}]}}\$f\$_{\gamma;[\eta_z^{\eps}]}\big],
\end{aligned}
\end{equation}
 locally uniformly in $z\in \R^d$, and uniformly over all $\eps$.
\end{assumption}
\begin{remark}
Given two discrete models as above, the form of the reconstruction operator alluded to in Remark~\ref{rem:rec} shows that $\CR^\eps$ is usually a bilinear function of the pair $(f,\Pi^\eps)$. Therefore, the validity of the Assumption~\ref{a:reccomparedisc} can be shown in a similar way as the one for Assumption~\ref{a:rec}.  
\end{remark}

\begin{theorem}
\label{thm:reconstruction}
Let $\gamma>0$, and fix a compact set $\K$. Fix a discrete model $(\Pi^\eps,\Gamma^\eps)$ such that Assumption~\ref{a:rec} is satisfied. We then have the estimate
\begin{equation}
\label{eq:reconstruction}
|\iota_\eps(\cR^{\eps} f-\Pi^{\eps}_zf(z))(\eta_z^{\delta})|
\lesssim \delta^\gamma \|\Pi^{\eps}\|^{(\eps)}_{\gamma;\bar\K}\$ f\$^{(\eps)}_{\gamma;[\eta_z^{\delta}]},
\end{equation}
uniformly over all test function $\eta\in \Phi$, all $\delta\in(\eps,1]$, all $f\in \CD_\eps^\gamma$, all $z\in\K$, and all $\eps\in (0,1]$.
Given a second discrete model $(\bar\Pi^{\eps},\bar\Gamma^{\eps})$ such that additionally Assumption~\ref{a:reccomparedisc} is satisfied, then for all $f\in\CD_\eps^{\gamma}(\Gamma^\eps)$ and $\bar f\in\CD_\eps^{\gamma}(\bar\Gamma^\eps)$,
\begin{equation}
\label{eq:differencelargescale}
\begin{aligned}
|\iota_\eps&(\cR^{\eps}f-\bar\cR^{\eps}\bar f-\Pi_z^{\eps}f(z)+\bar\Pi_z^{\eps}\bar f(z))(\eta_z^{\delta})|\\
&\lesssim \delta^{\gamma}(\|\bar\Pi^{\eps}\|^{(\eps)}_{\gamma;\bar\K}\$f;\bar f\$^{(\eps)}_{\gamma;[\eta_z^{\delta}]}
+\|\Pi^{\eps}-\bar\Pi^{\eps}\|^{(\eps)}_{\gamma;\bar\K}\$f\$^{(\eps)}_{\gamma;[\eta_z^{\delta}]}),
\end{aligned}
\end{equation}
uniformly over all parameters as above. Finally, if $\cR$ is the reconstruction operator corresponding to a continuous model $(\Pi,\Gamma)$ such that Assumption~\ref{a:reccomparedisc} holds, $f\in\CD^\gamma(\Gamma)$, and $f^\eps\in\CD_\eps^{\gamma}(\Gamma^\eps)$, then 
\begin{equation}
\label{eq:reconstrcontdiscrete}
\begin{aligned}
|(\iota_\eps&(\cR^{\eps}f^\eps-\Pi_z^\eps f^\eps(z))- \cR f+\Pi_zf (z))(\eta_z^{\delta})|\\
&\lesssim \delta^{\gamma}(\|\Pi^{(\eps)}\|_{\gamma;\bar\K}^{(\eps)}\$f^\eps; f\$_{\gamma;[\eta_z^{\delta}]}
+\|\Pi^{\eps};\Pi\|_{\gamma;\bar\K}\$f\$^{(\eps)}_{\gamma;[\eta_z^{\delta}]}),
\end{aligned}
\end{equation}
uniformly over all parameters as above.
\end{theorem}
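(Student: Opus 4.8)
The plan is to obtain all three estimates by the same device: a telescopic decomposition over the dyadic scales lying between $\eps$ and $\delta$, which is the discretised counterpart of the wavelet proof of the reconstruction theorem in \cite[Thm.~3.10]{Regularity}, with the dyadic sum truncated at scale $\eps$ and the resulting ``boundary'' term controlled by Assumption~\ref{a:rec} (or \ref{a:reccomparedisc}). I describe the plan for \eqref{eq:reconstruction}; the other two estimates follow by running the same steps on the appropriate difference and using multilinearity in $(f,\Pi^\eps,\CR^\eps)$.

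After the usual rescaling one may assume $\delta=2^{-n_0}$ (the remaining case $\eps<\delta\le 2\eps$ is a direct consequence of Assumption~\ref{a:rec} after one re-centering). Let $N$ be the largest integer with $2^{-N}\ge\eps$, so $\eps\le 2^{-N}<2\eps$ and $n_0\le N$. For $m=n_0,\dots,N$ fix a smooth partition of unity $\{\chi^m_k\}_k$ at scale $2^{-m}$, with $\chi^m_k$ supported in a $d_\s$-ball of radius $\lesssim 2^{-m}$ around a point $x^m_k$, and introduce the ``patched local reconstructions''
\begin{equation*}
\CR^\sharp_m f \;:=\; \sum_k \bigl(\iota_\eps\Pi^\eps_{x^m_k}f(x^m_k)\bigr)\,\chi^m_k \;\in\;\CD'(\R^d)\;.
\end{equation*}
Since $\sum_k\chi^m_k\equiv 1$ and $\Pi^\eps_z=\Pi^\eps_y\Gamma^\eps_{yz}$, I then split
\begin{equation*}
\iota_\eps\bigl(\CR^\eps f-\Pi^\eps_z f(z)\bigr)
= \bigl(\iota_\eps\CR^\eps f-\CR^\sharp_N f\bigr)
+ \sum_{m=n_0}^{N-1}\bigl(\CR^\sharp_{m+1}f-\CR^\sharp_m f\bigr)
+ \bigl(\CR^\sharp_{n_0}f-\iota_\eps\Pi^\eps_z f(z)\bigr)
\end{equation*}
and estimate each summand tested against $\eta_z^\delta$.

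For the first summand, $\iota_\eps\CR^\eps f-\CR^\sharp_N f=\sum_k\iota_\eps(\CR^\eps f-\Pi^\eps_{x^N_k}f(x^N_k))\chi^N_k$; since $2^{-N}\asymp\eps\le\delta$, each $\chi^N_k\eta_z^\delta$ is $(\eps/\delta)^{|\s|}$ times a bounded multiple of a genuine scale-$\eps$ test function centred at $x^N_k$, so by \eqref{eq:relationtoseminorm} and Assumption~\ref{a:rec} the corresponding pairing is $\lesssim(\eps/\delta)^{|\s|}\eps^\gamma\|\Pi^\eps\|^{(\eps)}_{\gamma;\bar\K}\$f\$_{\gamma;[\eta_z^\delta];\eps}$; summing over the $\lesssim(\delta/\eps)^{|\s|}$ indices meeting $[\eta_z^\delta]$ gives $\lesssim\eps^\gamma(\cdots)\le\delta^\gamma(\cdots)$. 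For a telescopic increment, $\CR^\sharp_{m+1}f-\CR^\sharp_m f=\sum_{k,l}\iota_\eps\Pi^\eps_{x^{m+1}_l}\bigl(f(x^{m+1}_l)-\Gamma^\eps_{x^{m+1}_l x^m_k}f(x^m_k)\bigr)\chi^m_k\chi^{m+1}_l$, where only $O(1)$ indices $l$ pair with each $k$ and $\|x^{m+1}_l-x^m_k\|_\s\lesssim 2^{-m}$; since $2^{-m}\ge 2\eps>\eps$, I bound the pairing with $\chi^m_k\chi^{m+1}_l\eta_z^\delta$ — which is $(2^{-m}/\delta)^{|\s|}$ times a scale-$2^{-m}$ test function — by the model bounds \eqref{eq:Pi}--\eqref{eq:Gamma} (re-centering $\Pi^\eps_{x^{m+1}_l}$ onto $x^m_k$ through $\Gamma^\eps$ over distance $\lesssim 2^{-m}$) together with the $\CD^\gamma_\eps$-bound on the homogeneous components of $f(x^{m+1}_l)-\Gamma^\eps_{x^{m+1}_l x^m_k}f(x^m_k)$; the powers of $2^{-m}$ produced by $\Pi^\eps$ and by $\Gamma^\eps$ cancel, so each increment contributes $\lesssim(2^{-m})^\gamma\|\Pi^\eps\|^{(\eps)}_{\gamma;\bar\K}\$f\$^{(\eps)}_{\gamma;[\eta_z^\delta]}$ after summing over the $\lesssim(\delta 2^m)^{|\s|}$ pairs, and $\sum_{m\ge n_0}(2^{-m})^\gamma\lesssim\delta^\gamma$ because $\gamma>0$. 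Finally, $\CR^\sharp_{n_0}f-\iota_\eps\Pi^\eps_z f(z)=\sum_k\iota_\eps\Pi^\eps_{x^{n_0}_k}\bigl(f(x^{n_0}_k)-\Gamma^\eps_{x^{n_0}_k z}f(z)\bigr)\chi^{n_0}_k$; here $\chi^{n_0}_k\eta_z^\delta$ is a bounded multiple of a scale-$\delta$ test function, only $O(1)$ indices contribute and $\|x^{n_0}_k-z\|_\s\lesssim\delta$, so the re-centering distance matches the test scale and the same cancellation yields $\lesssim\delta^\gamma\|\Pi^\eps\|^{(\eps)}_{\gamma;\bar\K}\$f\$^{(\eps)}_{\gamma;[\eta_z^\delta]}$. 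Adding the three bounds gives \eqref{eq:reconstruction}.

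The estimates \eqref{eq:differencelargescale} and \eqref{eq:reconstrcontdiscrete} are obtained by the same argument applied to $\CR^\eps f-\bar\CR^\eps\bar f-\Pi^\eps_z f(z)+\bar\Pi^\eps_z\bar f(z)$, resp. $\iota_\eps(\CR^\eps f^\eps-\Pi^\eps_z f^\eps(z))-(\CR f-\Pi_z f(z))$: one forms the patched reconstructions for each model, all the quantities above being linear in the data, and in the boundary and telescopic steps one invokes Assumption~\ref{a:reccomparedisc} instead of Assumption~\ref{a:rec}; for \eqref{eq:reconstrcontdiscrete} one additionally uses the continuous reconstruction theorem \cite[Thm.~3.10]{Regularity} to bound the ``sub-$\eps$ tail'' $\CR f-\CR^\sharp_N f$ of the continuous model by $\lesssim\eps^\gamma\le\delta^\gamma$, the second half of Assumption~\ref{a:reccomparedisc} supplying the matching estimate for the discrete tail. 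I expect the main obstacle to be the bookkeeping at the scale-$\eps$ interface — verifying that each product $\chi^m_k\chi^{m+1}_l\eta_z^\delta$ (and $\chi^N_k\eta_z^\delta$) genuinely is a rescaled element of $\Phi$ with the stated normalisation, and that the near-diagonal re-centerings, where $\|x^m_k-x^{m+1}_l\|_\s$ or $\|x^{n_0}_k-z\|_\s$ can drop below $\eps$, are absorbed by the $\$\cdot\$_{\gamma;\,\cdot\,;\eps}$ part of the $\CD^\gamma_\eps$-norm (and vanish when the two base points coincide).
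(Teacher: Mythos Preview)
Your approach is essentially the same as the paper's: both proofs decompose $\iota_\eps(\CR^\eps f-\Pi^\eps_z f(z))(\eta_z^\delta)$ into a boundary term at scale $\eps$ (controlled by Assumption~\ref{a:rec}), a telescopic sum over dyadic scales $2^{-m}\in(\eps,\delta]$ (controlled by the model and $\CD^\gamma_\eps$ bounds), and an initial re-centering at scale $\delta$, using a smooth partition of unity at each level; the paper writes this as $\I+\II+\III$ with products $\tilde\Psi^{\delta,k}_{z,[z_k,z_{k+1}]}=\eta_z^\delta\Psi_{[z_k]}\Psi_{[z_{k+1}]}$, which is exactly your $\chi^m_k\chi^{m+1}_l\eta_z^\delta$ construction, and for the difference bounds it uses the algebraic identity $\Pi^\eps_z f(z)-\bar\Pi^\eps_z\bar f(z)-\Pi^\eps_y f(y)+\bar\Pi^\eps_y\bar f(y)=\bar\Pi^\eps_z(\cdots)+(\Pi^\eps_z-\bar\Pi^\eps_z)(\cdots)$ in place of your ``linearity in the data''. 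The one refinement in the paper worth noting is that the base points (your $x^m_k$, the paper's $z_{|k}$) are chosen to lie \emph{inside} $[\eta_z^\delta]$ rather than at the lattice nodes, which is what guarantees that the $\CD^\gamma_\eps$-norm on the right-hand side is taken over $[\eta_z^\delta]$ and not some fattening; this also addresses your worry about sub-$\eps$ re-centerings, since one may simply choose $z_{|k}=z_{|k+1}$ whenever the corresponding lattice points are within $\eps$ of each other (the increment then vanishes).
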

Before we give the proof we need to introduce more notation. For $n\in\N$ we define the scaled lattice
\begin{equation}
\label{eq:lambda}
\Lambda_n^\s=\bigg\{\sum_{j=1}^{d}2^{-n\s_j}k_je_j:\, k_j\in\Z\bigg\},
\end{equation}
where we denote by $e_1,\ldots, e_d$ the canonical basis of $\R^d$.
\begin{proof}
We fix a smooth function $\Psi:\R\to[0,1]$ such that $\mathrm{supp}\,\Psi\subset [-1,1]$ and such that additionally
for every $z\in\R$,
\begin{equation}
\label{eq:summingtoone}
\sum_{k\in\Z}\Psi(z+k)=1.
\end{equation}
Fix $k\in\N$ and let $z_k\in\Lambda_{k}^\s, z_{k+1}\in\Lambda_{k+1}^{\s}$.
We define rescaled versions of $\Psi$ via
\begin{equation}
\label{eq:scalephi}
\Psi_{[z_{k}]}(y)=\prod_{i=1}^d \Psi(2^{k\s_i}(y_i-z_{k,i})),\quad\mbox{and}\quad
\Psi_{[z_{k},z_{k+1}]}(y)=\Psi_{[z_{k}]}(y)\Psi_{[z_{k+1}]}(y),
\end{equation}
where $y_i$ and $z_{k,i}$ denote the $i$-th coordinate of $y$ and $z_k$, respectively. 
Note the simple but useful identity
\begin{equation}
\label{eq:scalephiidentities}
\sum_{z_k\in\Lambda_k^{\s}}\Psi_{[z_{k}]}=1,
\end{equation}
which immediately follows from~\eqref{eq:summingtoone}.
We may now start with the core part of the proof. To that end, let $n_0$ be the smallest integer such that
$2^{-n_0}\leq \delta$ and define $\tilde\Psi_{z,[z_k]}^{\delta,k} = \eta_z^{\delta}\,\Psi_{[z_k]}$,
as well as $\tilde\Psi_{z,[z_k,z_{k+1}]}^{\delta,k} = \eta_z^{\delta}\,\Psi_{[z_k,z_{k+1}]}$ .
Note that for each $k\geq n_0$ the support of  $\tilde\Psi_{z,[z_k]}^{\delta,k}$ is contained in a ball of radius $2^{-k}$ with center $z_k$, and a similar statement holds for $\tilde\Psi_{z,[z_k,z_{k+1}]}^{\delta,k}$.
To continue, for each $z_k\in\Lambda_k^\s$ such that $d_\s(z_k,[\eta_z^{\delta}])< 2^{-k}$ we let $z_{|k}$ be an arbitrary chosen element in $[\eta_z^{\delta}]$ such that $d_\s(z_k,z_{|k}) <2^{-k}$.
With all this at hand we see that for every $N > n_0$ we can write
\begin{equation}
\label{eq:rec123}
\iota_\eps(\cR^{\eps} f-\Pi_z^\eps f(z))(\eta_z^{\delta})= \I + \II + \III,
\end{equation}
where
\begin{equs}
\label{eq:def123}
\I &= \sum_{z_N\in\Lambda_N^{\s}}\iota_\eps(\cR^{\eps} f-\Pi_{z_{|N}}^\eps f(z_{|N}))(\tilde\Psi_{z,[z_N]}^{\delta,N}),\\
\II &= \sum_{k=n_0}^{N-1}\sum_{z_{k}\in\Lambda_{k}^{\s}, z_{k+1}\in\Lambda_{k+1}^{\s}}\iota_\eps(\Pi_{z_{|k+1}}^\eps f(z_{|k+1})-\Pi_{z_{|k}}^\eps f(z_{|k}))(\tilde\Psi_{z,[z_k,z_{k+1}]}^{\delta,k}),\\
\III &= \sum_{z_{n_0}\in\Lambda_{n_0}^{\s}}\iota_\eps(\Pi_{z_{|n_0}}^\eps f(z_{|n_0})-\Pi_z^\eps f(z))(\tilde\Psi_{z,[z_{n_0}]}^{\delta,n_0}),
\end{equs}
where we made use of the identities~\eqref{eq:scalephiidentities} (recall at this point that $z_{|k}$ is really a function of $z_k$).
We now choose $N$ to be the smallest value such that $2^{-N} \le \eps$, so that 
the support of $\tilde\Psi_{z,[z_N]}^{\delta,N}$ is a compact set of diameter at most $2\eps$.
We also remark that for any $k \in [n_0,N]$ there exists an element $\psi \in \Phi$ and a constant $c_k$ that is uniformly bounded in $k$ such that
\begin{equ}
(\delta 2^k)^{|\s|} \tilde{\Psi}^{\delta,k}_{z,[z_k,z_{k+1}]} = c_k\CS_{z_{|k+1}}^{2^{-k-1}} \psi\;,
\end{equ}
and similarly for $\tilde{\Psi}^{\delta,k}_{z,[z_k]}$. Here, the constants $c_k$ appear since the various derivates of $(\delta 2^k)^{|\s|} \tilde{\Psi}^{\delta,k}_{z,[z_k,z_{k+1}]}$ up to order $r$ may not satisfy the estimates that rescaled elements of $\Phi$ necessarily do satisfy. We can therefore estimate
\begin{equation}
\label{eq:Iest}
\begin{aligned}
\big|\iota_\eps(\cR^{\eps} f-\Pi_{z_{|N}}^\eps f(z_{|N}))(\tilde\Psi_{z,[z_N]}^{\delta})\big|
&\lesssim (\delta 2^{N})^{-|\s|}2^{-N\gamma}\| \cR^{\eps} f-\Pi_{z_{|N}}^\eps f(z_{|N})\|_{\gamma;[\tilde\Psi_{z,[z_N]}^{\delta}];z_{|N};\eps}\\
&\lesssim (\delta 2^{N})^{-|\s|}2^{-N\gamma}\|\Pi^\eps\|_{\gamma,\bar\K}^{(\eps)}\$ f\$_{\gamma;[\tilde\Psi_{z,[z_N]}^{\delta}];\eps}.
\end{aligned}
\end{equation}
Thus, the desired estimate \eqref{eq:reconstruction} on $\I$ follows from the fact that, since $N > n_0$, one has 
$\tilde\Psi_{z,[z_N]}^{\delta,N}=0$ if 
$\delta\lesssim \| z_N-z\|_{\s}$, combined with the bound $|\{z_N\in\Lambda_N^{\s}:\, \|z_N-z\|_{\s}\leq \delta\}|\lesssim (2^N\delta)^{|\s|}$. Here, the proportionality constants only depend on the dimension.
To deal with $\II$, note that $\tilde{\Psi}^{\delta,k}_{z,[z_k,z_{k+1}]}=0$, unless $d_\s(z_k,z_{k+1})< 2^{-k+1}$.
Recall that $\CQ_\ell$ denotes the projection onto the subspace of degree $\ell$ in our regularity structure, we
can therefore estimate for each $k\in[n_0,\ldots, N-1]$ and each pair $z_k, z_{k+1}$ contributing to the sum in $\II$, 
\begin{equation}
\label{eq:IIest}
\begin{aligned}
\big|\iota_\eps(\Pi_{z_{|k+1}}^\eps &f(z_{|k+1})-\Pi_{z_{|k}}^\eps f(z_{|k}))(\tilde\Psi_{z,[z_k,z_{k+1}]}^{\delta,k})\big|\\
&= \big|\iota_\eps\big(\Pi_{z_{|k+1}}^\eps [f(z_{|k+1})-\Gamma^{\eps}_{z_{|k+1}z_{|k}}f(z_{|k})]\big)(\tilde\Psi_{z,[z_k,z_{k+1}]}^{\delta,k})\big|\\
&\leq \sum_{\ell<\gamma}
\big|\iota_\eps\big(\Pi_{z_{|k+1}}^\eps \CQ_{\ell}[f(z_{|k+1})-\Gamma^{\eps}_{z_{|k+1}z_{|k}}f(z_{|k})]\big)(\tilde\Psi_{z,[z_k,z_{k+1}]}^{\delta,k})\big|\\
&\leq (\delta 2^{k})^{-|\s|} \|\Pi^\eps\|_{\gamma,\bar\K}^{(\eps)}\sum_{\ell <\gamma}
2^{-(k+1)\ell}\|f(z_{|k+1})-\Gamma^{\eps}_{z_{|k+1}z_{|k}}f(z_{|k})\|_{\ell}\\
&\lesssim (\delta 2^{k})^{-|\s|} \|\Pi^{\eps}\|_{\gamma,\bar\K}^{(\eps)}\$f\$_{\gamma,[\eta_z^{\delta}]}^{(\eps)}2^{-k\gamma}\;,
\end{aligned}
\end{equation}
where we exploited the fact that the distance between $z_{|k}$ and $z_{|k+1}$ is at most of
order $2^{-k}$.
To conclude the estimate of $\II$ we note that for each $z_k\in\Lambda_k^{\s}$, the number of values for $z_{k+1}$ that contribute to the corresponding sum in~\eqref{eq:def123} is at most $5^d$ and the number of $z_k$ such that $\tilde{\Psi}_{z,[z_k,z_{k+1}]}^{\delta,k}\neq 0$, i.e., $\|z_k-z\|_{\s}\lesssim (\delta+2^{-k})$ is of the order $(\delta2^{k})^{|\s|}$. 
Thus, we see that summing the right hand side of~\eqref{eq:IIest} over $k\in[n_0,N-1]$ yields the desired estimate on $\II$.
The estimate on $\III$ is similar and we therefore omit the details.
To prove~\eqref{eq:differencelargescale} we first write
$\iota_\eps(\cR^{\eps}f-\bar\cR^{\eps}\bar f- \Pi_z^{\eps}f(z)+\bar\Pi_z^{\eps}\bar f(z))(\eta_z^{\delta})$ as a sum of three terms $\I'$, $\II'$ and $\III'$ where,
\begin{equation}
\label{eq:def1}
\begin{aligned}
\I' = \sum_{z_N\in\Lambda_N^{\s}}\iota_\eps(\cR^{\eps} f-\bar\cR^{\eps}\bar f- \Pi^{\eps}_{z_{|N}}f(z_{|N})+\bar\Pi_{z_{|N}}^{\eps}\bar f(z_{|N}))(\tilde\Psi_{z,[z_N]}^{\delta,N}),
\end{aligned}
\end{equation}
and $\II'$ and $\III'$ are the ``telescopic sum'' terms analogous to $\II$ and $\III$ in~\eqref{eq:def123}.
To estimate $\I'$ we may use Assumption~\ref{a:reccomparedisc} and we can proceed in exactly the same way as we did to estimate $\I$.
To bound $\II'$ and $\III'$ we make use of the identity 
\begin{equation}
\label{eq:pidifference}
\begin{aligned}
\Pi_z^{\eps}f(z)&-\bar\Pi^{\eps}_z\bar f(z)-\Pi^{\eps}_yf(y)+\bar\Pi^{\eps}_y\bar f(y)\\
&= \bar\Pi^{\eps}_z(f(z)- \Gamma^{\eps}_{zy} f(y)-\bar f(z) +\bar \Gamma^{\eps}_{zy}\bar f(y))
+(\Pi^{\eps}_z-\bar\Pi^{\eps}_z)(f(z)-\Gamma^{\eps}_{zy} f(y)).
\end{aligned}
\end{equation}
From that point on, one proceeds in the same way as for the bounds on $\II$ and $\III$.
The bound~\eqref{eq:reconstrcontdiscrete} is then obtained in virtually the same way as~\eqref{eq:differencelargescale}.
\end{proof}

\subsection{The reconstruction operator in weighted $\CD_\eps^\gamma$-spaces}
\label{S3.1}
In~\cite{Regularity}, versions of the $\CD^\gamma$-spaces were introduced that allow for singularities on a hyperplane $P$.
In the present context this translates to spaces $\CD_\eps^{\gamma,\eta}$ that have an additional dependence on $\eps$
and that generalise the corresponding spaces $\CD^{\gamma,\eta}$ in~\cite{Regularity}.
Fix $\bar d\in [1,d)$, let $P$ be the hyperplane given by
\begin{equation}
\label{eq:hyperplane}
P=\{z\in\R^d:\, z_i=0,\, i=1,\ldots, \bar d\}\;,
\end{equation}
and denote by $\mathfrak{m}=\s_1+\cdots+\s_{\bar d}$ the (effective) codimension of $P$.
We further let 
\begin{equation}
\label{eq:Pnorm}
\|z\|_{P}= 1\wedge d_\s(z,P), \quad\mbox{and}\quad \|y,z\|_{P}= \|y\|_{P}\wedge \|z\|_{P},
\end{equation}
as well as
\begin{equation}
\label{eq:kp}
\K_P=\{(y,z)\in (\K\setminus P)^2:\, y\neq z,\, \|y-z\|_\s\leq \|y,z\|_{P}\}.
\end{equation}
Our construction relies again on the existence of a family of ``small-scale'' norms 
exhibiting the correct kind of behaviour for $\eps \neq 0$.

\begin{assumption}
\label{ass:weightedDgamma}
We are given two families of extended seminorms $\$\cdot\$_{\gamma,\eta;\K;\eps}$ and $\$\cdot;\cdot\$_{\gamma,\eta;\K;\eps}$ on the space of functions $f:\R^d\to\CT_{<\gamma}$, respectively on pairs $(f,\bar f)$ of such functions, 
with $\gamma,\eta\in\R$, $\K\Subset \R^d$ and $\eps\in(0,1]$.

These are such that, for any two discrete models $(\Pi^\eps,\Gamma^\eps)$ and $(\bar\Pi^\eps,\bar\Gamma^\eps)$, and two functions
$f\in\CD_\eps^{\gamma}(\Gamma^\eps)$ and $\bar f\in\CD_\eps^{\gamma}(\bar\Gamma^\eps)$ one has
\begin{equation}
\label{eq:relationDgammanorms}
\begin{aligned}
\$f\$_{\gamma;\K;\eps}&\lesssim d_\s(z,P)^{\eta-\gamma}\$f\$_{\gamma,\eta;\K;\eps}\;,\\
\$f;\bar f\$_{\gamma;\K;\eps}&\lesssim d_s(z,P)^{\eta-\gamma}\$ f;\bar f\$_{\gamma,\eta;\K;\eps}\;,
\end{aligned}
\end{equation}
for any compact set $\K \ni z$ such that $d_\s(\K,P)\geq \diam(\K)\in (\eps,1]$,
as well as any $\gamma>0$ and $\eta\in\R$.
\end{assumption}
\begin{remark}
\label{rem:weightednorms}
The way to think about $\$\cdot\$_{\gamma, \eta;\K;\eps}$ and $\$\cdot;\cdot\$_{\gamma,\eta;\K;\eps}$ is that they are weighted versions of $\$\cdot\$_{\gamma;\K;\eps}$ and $\$\cdot;\cdot\$_{\gamma;\K;\eps}$, respectively.
A natural choice in the transparent case is given by
\cite[Def.~6.2]{Regularity}, but with the first supremum restricted to 
points $z$ with $\|z\|_P < \eps$ and the second supremum restricted to pairs $y,z$ with $\|y-z\|_\s<\eps$.

Similarly, a possible choice in the purely discrete case is 
\begin{equation}
\$ f\$_{\gamma,\eta;\K;\eps}=\sup_{\substack{z\in\K\setminus P\\ \|z\|_P < \eps}}\sup_{\beta <\gamma}\frac{\|f(z)\|_\beta}{\|z\|_{P,\eps}^{(\eta-\beta)\wedge 0}}
+ \sup_{\substack{(y,z)\in\K_P,\\ \|y-z\|_\s<\eps}}
\sup_{\beta <\gamma}\frac{\|f(z)-\Gamma^{\eps}_{zy} f(y)\|_\beta}{\eps^{\gamma-\beta}\|y,z\|_{P,\eps}^{\eta-\gamma}},
\end{equation}
and
\begin{equation}
\begin{aligned}
\$ f;\bar{f}\$_{\gamma,\eta;\K;\eps}=\sup_{\substack{z\in\K\setminus P\\ \|z\|_P < \eps}}&\sup_{\beta <\gamma}\frac{\|f(z)-\bar f(z)\|_\beta}{\|z\|_{P,\eps}^{(\eta-\beta)\wedge 0}}\\
&+ \sup_{\substack{(y,z)\in\K_P,\\ \|y-z\|_\s<\eps}}
\sup_{\beta <\gamma}\frac{\|f(z)-\Gamma^{\eps}_{zy} f(y)-\bar f(z)+\bar\Gamma_{zy}^\eps \bar f(y)\|_\beta}{\eps^{\gamma-\beta}\|y,z\|_{P,\eps}^{\eta-\gamma}},
\end{aligned}
\end{equation}
where $\|z\|_{P,\eps}=\|z\|_P\vee \eps$ and $\|y,z\|_{P,\eps}= \|y,z\|_P \vee\eps$.
In a similar manner, one may choose the corresponding norms in the continuous and in the semidiscrete case.
\end{remark}
We then have the following definition.
\begin{definition}
\label{def:weightedDgamma}
Fix a regularity structure $\TT$ and a discrete model $(\Pi^{\eps},\Gamma^{\eps})$ and let $\eta\in\R$. We define the space 
$\CD^{\gamma,\eta}_\eps$ as the space of all functions $f:\R^d\setminus P\to\CT_{<\gamma}$ such that $\$ f\$_{\gamma,\eta;\K}^{(\eps)}<\infty$, where $\$ f\$_{\gamma,\eta;\K}^{(\eps)}$ is defined by
\begin{equation}
\label{eq:weightedDgamma}
\sup_{\substack{z\in\K\setminus P\\ \|z\|_P\geq \eps}}\sup_{\beta <\gamma}\frac{\| f(z)\|_{\beta}}{\| z\|_{P}^{(\eta-\beta)\wedge 0}} + \sup_{\substack{(y,z)\in\K_P,\\ \eps\leq \|y-z\|_\s\leq 1}}\sup_{\beta <\gamma}
\frac{\|f(z)-\Gamma^{\eps}_{zy}f(y)\|_{\beta}}{\|y-z\|_\s^{\gamma-\beta}\|y,z\|_{P}^{\eta-\gamma}}
+\$f\$_{\gamma,\eta;\K;\eps}.
\end{equation}
Elements of $\CD_\eps^{\gamma,\eta}$ are called singular modelled distributions.
\end{definition}
\begin{remark}
\label{rem:comparisonweigthed}
Given two discrete models $(\Pi^\eps,\Gamma^\eps)$ and $(\bar\Pi^\eps,\bar\Gamma^\eps)$, and $f\in\CD^{\gamma,\eta}_\eps(\Gamma^\eps)$ as well as $\bar{f}\in\CD^{\gamma,\eta}_\eps(\bar\Gamma^\eps)$, we define
$\$f;\bar f\$_{\gamma,\eta;\K}^{(\eps)}$ via
\begin{equation}
\begin{aligned}
\$f;\bar f\$_{\gamma,\eta;\K}^{(\eps)} =& \sup_{\substack{z\in\K\setminus P\\ \|z\|_P\geq \eps}}\sup_{\beta <\gamma}\frac{\| f(z)-\bar f(z)\|_{\beta}}{\| z\|_{P}^{(\eta-\beta)\wedge 0}} + \$f;\bar f\$_{\gamma,\eta;\K;\eps}\\
&+ \sup_{\substack{(y,z)\in\K_P,\\ \eps\leq \|y-z\|_\s\leq 1}}\sup_{\beta <\gamma}
\frac{\|f(z)-\bar f(z) -\Gamma^{\eps}_{zy}f(y)+\bar\Gamma^{\eps}_{zy}\bar f(y)\|_{\beta}}{\|y-z\|_\s^{\gamma-\beta}\|y,z\|_{P}^{\eta-\gamma}}.
\end{aligned}
\end{equation}
\end{remark}

\begin{remark}
\label{rem:weightedDgammacont}
Given a continuous model $(\Pi,\Gamma)$, if we 
choose $\$\cdot\$_{\gamma,\eta;\K;\eps}$ as in the transparent case of Remark~\ref{rem:weightednorms}, then
Definition~\ref{def:weightedDgamma} coincides with \cite[Def.~6.2]{Regularity}. From now on we assume that in the transparent case $\$\cdot\$_{\gamma,\eta;\K;\eps}$ is given in this way.
\end{remark}
\begin{definition}
\label{def:weightedDgammacontdisc}
Fix a continuous model $(\Pi,\Gamma)$, a discrete model $(\Pi^\eps,\Gamma^\eps)$, and two functions $f\in \CD^{\gamma,\eta}(\Gamma)$ and $f^\eps\in\CD_\eps^{\gamma,\eta}(\Gamma^\eps)$. We define their distance $\$f;f^\eps\$_{\gamma,\eta;\K}$ as the sum 
\begin{equation}
\$f;f^\eps\$_{\gamma,\eta;\K;\geq\eps}+ \$f\$_{\gamma,\eta;\K;\eps} + \$f^\eps\$_{\gamma,\eta;\K;\eps},
\end{equation}
where
\begin{equation}
\label{eq:weightedDgammacontdiscI}
\begin{aligned}
\$f;f^\eps\$_{\gamma,\eta;\K;\geq\eps}= &\sup_{\substack{z\in\K\setminus P\\ \|z\|_P\geq \eps}}\sup_{\beta <\gamma}\frac{\| f(z)-f^\eps (z)\|_{\beta}}{\| z\|_{P}^{(\eta-\beta)\wedge 0}}\\
&+\sup_{\substack{(y,z)\in\K_P,\\ \eps\leq \|y-z\|_\s\leq 1}}\sup_{\beta <\gamma}
\frac{\|f(z)-\Gamma_{zy}f(y)-f^\eps(z)+\Gamma^\eps_{zy} f^\eps(y)\|_{\beta}}{\|y-z\|_\s^{\gamma-\beta}\|y,z\|_{P}^{\eta-\gamma}}.
\end{aligned}
\end{equation}
Here, we remind the reader of the convention made in Remark~\ref{rem:weightedDgammacont}.
\end{definition}

We make the following assumption.
\begin{assumption}
\label{ass:Rsmallscalesweighted}
Let $V$ be a sector of a regularity structure $\TT$ of regularity $\alpha$, let $(\Pi^\eps,\Gamma^\eps)$ and $(\bar\Pi^\eps,\bar\Gamma^\eps)$ be two discrete models with associated reconstruction operators $\CR^\eps$ and $\bar\CR^\eps$, and let $f\in\CD_\eps^{\gamma,\eta}(\Gamma^\eps)$ and $\bar f\in\CD_\eps^{\gamma,\eta}(\bar\Gamma^\eps)$. Let further $f$ and $\bar f$ be such that they take values in $V$. We assume that,
\begin{equation}\label{eq:Rsmallscalesweighted}
\begin{aligned}
\|\CR^\eps f\|_{\alpha\wedge\eta;\K_\eps;z;\eps}&\lesssim \$ f\$_{\gamma,\eta;\K_\eps}^{(\eps)}\;,\\
\|\CR^\eps f-\bar\CR^\eps \bar f\|_{\alpha\wedge\eta;\K_\eps;z;\eps} &\lesssim \|\bar\Pi^\eps\|_{\gamma;\bar \K_\eps}^{(\eps)}\$ f;\bar f\$_{\gamma,\eta;\K_\eps}^{(\eps)} + \|\Pi^\eps-\bar \Pi^\eps\|_{\gamma;\bar\K_\eps}^{(\eps)}\$ f\$_{\gamma,\eta;\K_\eps}^{(\eps)}
\end{aligned}
\end{equation}
locally uniformly over $z$ and over all compact sets $\K_\eps$ with $\mathrm{diam}(\K_\eps)\leq 2\eps$ containing $z$.
Fix a continuous model $(\Pi,\Gamma)$ with associated reconstruction operator $\CR$, further fix $f\in\CD^{\gamma,\eta}(\Gamma)$, and $f^\eps\in\CD_\eps^{\gamma,\eta}(\Gamma^\eps)$ both taking values in $V$. We assume that for every $\eta\in \Phi$, 
\begin{equation}
\begin{aligned}
\big|(\iota_\eps\CR^\eps f^\eps - &\CR f )(\eta_z^{\eps})\big |\\
&\lesssim \eps^{\alpha\wedge\eta}\big[\|\Pi^\eps\|_{\gamma;\overline{[\eta_z^{\eps}]}}^{(\eps)}\$f;f^\eps\$_{\gamma;[\eta_z^{\eps}]}
+ \|\Pi;\Pi^\eps\|_{\gamma;\overline{[\eta_z^{\eps}]}}\$f\$_{\gamma;[\eta_z^{\eps}]}\big],
\end{aligned}
\end{equation}
locally uniformly in $z\in\R^d$, and uniformly over all $\eps\in (0,1]$.
\end{assumption}
\begin{remark}
We shortly argue why Assumption~\ref{ass:Rsmallscalesweighted} is reasonable. Assume that we are in the purely discrete case and that $\CR^\eps$ is given as in Remark~\ref{rem:rec}. Then, for every $f\in\CD_\eps^{\gamma,\eta}(\Gamma^\eps)$, any compact set $\K_\eps$, and any $y\in\Lambda_\eps^d\cap\K_\eps$ we have
\begin{equation}
\begin{aligned}
|\CR^\eps f(y)|&=
|(\Pi_y^\eps f(y))(y)|
\leq \sum_{\ell <\gamma}|(\Pi_y^\eps \CQ_\ell f(y))(y)|
\lesssim \|\Pi^\eps\|_{\gamma;\K_\eps}^{(\eps)}
\sum_{\ell <\gamma}\|f(y)\|_\ell \eps^{\ell}.
\end{aligned}
\end{equation}	
To see that this implies the first bound in~\eqref{eq:Rsmallscalesweighted}, multiply and divide each summand above by $\|y\|_{P,\eps}^{(\eta-\ell)\wedge 0}$ and note that $\|y\|_{P,\eps}^{(\eta-\ell)\wedge 0}\eps^{\ell-\alpha\wedge \eta}\leq 1$. The remaining
two parts of the assumption can be shown in a similar way in this case.
\end{remark}
\begin{theorem}
\label{thm:recwithweights}
Fix a discrete model $(\Pi^\eps,\Gamma^\eps)$, let $f\in\CD_\eps^{\gamma,\eta}(\Gamma^\eps)$ for some $\gamma>0$, some $\eta\leq \gamma$, such that $f$ takes values in a sector $V$ of regularity $\alpha\leq 0$, and assume that Assumption~\ref{ass:weightedDgamma} holds. Further let $\K$ be a compact set, and let $\eta\in\Phi$.
Then, provided that $\alpha\wedge\eta>-\mathfrak{m}$, the reconstruction operator $\CR^{\eps}$ satisfies the following.
\begin{itemize}
\item[1.] One has the bound
\begin{equation}
\label{eq:recwithweights1}
|\iota_\eps(\CR^{\eps}f-\Pi_z^\eps f(z))(\eta_z^{\delta})|\lesssim d_\s(z,P)^{\eta-\gamma}\delta^{\gamma},
\end{equation}
for all $\delta\in[\eps,1]$,
and for all $z\in\K$ such that $d_\s(z,P)\geq \bfc\eps+2\delta$.
\item[2.]
If Assumption~\ref{ass:Rsmallscalesweighted} is satisfied, then
\begin{equation}
\label{eq:recwithweights3}
|\iota_\eps(\CR^{\eps}f)(\eta_z^{\delta})|\lesssim \delta^{\alpha\wedge\eta}
\end{equation}
\end{itemize}
for all $\delta\in[\eps,1]$, and for all $z\in\K$.
In particular, there is no requirement on the location of the support of $\eta_z^{\delta}$.
In both estimates the proportionality constant is a multiple of $\|\Pi^\eps\|_{\gamma;\bar\K}^{(\eps)}\$f\$_{\gamma,\eta;[\eta_z^{\delta}]}^{(\eps)}$.
\begin{itemize}
\item[3.] Let $(\bar\Pi^\eps,\bar\Gamma^\eps)$ be a second discrete model with associated reconstruction operator $\bar\CR^\eps$ such that Assumption~\ref{ass:Rsmallscalesweighted} is satisfied, then
\begin{equation}
\label{eq:recwithweights4}
\begin{aligned}
|\iota_\eps&(\cR^{\eps}f-\bar\cR^{\eps}\bar f-\Pi_z^{\eps}f(z)+\bar\Pi_z^{\eps}\bar f(z))(\eta_z^{\delta})|
\lesssim d_\s(z,P)^{\eta-\gamma}\delta^\gamma,
\end{aligned}
\end{equation}
for all $\delta\in[\eps,1]$  and for all $z\in\K$ such that $d_\s(z,P)\geq \bfc\eps+2\delta.$
Moreover, the estimate~\eqref{eq:recwithweights3} holds for $|\iota_\eps(\CR^{\eps} f-\bar\CR^\eps\bar f)(\eta_z^{\delta})|$ as well.
In both cases, the proportionality constant is a multiple of 
\begin{equation}
\label{eq:propconstweighted}
\|\bar\Pi^\eps\|_{\gamma;\bar\K}^{(\eps)}\$f;\bar f\$^{(\eps)}_{\gamma,\eta;[\eta_z^{\delta}]}
+\|\Pi^{\eps}-\bar\Pi^{\eps}\|^{(\eps)}_{\gamma;\bar\K}\$ f\$_{\gamma,\eta;[\eta_z^{\delta}]}^{(\eps)}.
\end{equation}
\item[4.] Let $(\Pi,\Gamma)$ be a continuous model such that Assumption~\ref{ass:Rsmallscalesweighted} holds, then Item 3. holds for it as well. More precisely, if one replaces $(\bar\Pi^\eps,\bar\Gamma^\eps)$ by $(\Pi,\Gamma)$, the discrete reconstruction operator $\bar \CR^\eps$ by $\CR$ and $\bar f$ by $f\in\CD^{\gamma,\eta}(\Gamma)$, the same bounds as in Item 3 hold, provided the proportionality constants are adapted accordingly. 
\end{itemize}
\end{theorem}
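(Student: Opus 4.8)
The plan is to mimic the proof of Theorem~\ref{thm:reconstruction}, but with the multiresolution sum cut off at a coarser scale adapted to the distance to $P$, and with the weighted small-scale Assumption~\ref{ass:weightedDgamma} and Assumption~\ref{ass:Rsmallscalesweighted} replacing the unweighted ones. Throughout, write $\rho = d_\s(z,P)$ and normalise $\|\Pi^\eps\|_{\gamma;\bar\K}^{(\eps)}\$f\$_{\gamma,\eta;[\eta_z^\delta]}^{(\eps)}=1$.

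\textbf{Proof of Item 1.} Fix $\delta\in[\eps,1]$ and $z$ with $\rho\geq \bfc\eps+2\delta$. Then $[\eta_z^\delta]$ is at distance $\gtrsim \rho$ from $P$ and has diameter $\lesssim\delta$, so on a ball $\K_0\supset[\eta_z^\delta]$ of the same order one has $d_\s(\K_0,P)\gtrsim\rho$ and, crucially, every point $y\in\K_0$ and every pair $(y,y')\in\K_0^2$ with $\|y-y'\|_\s\leq 1$ lies in the ``safe'' region $\K_P$ where the weighted and unweighted $\CD^\gamma$-norms are comparable: by Definition~\ref{def:weightedDgamma}, $\$f\$_{\gamma;\K_0}^{(\eps)}\lesssim \rho^{\eta-\gamma}\$f\$_{\gamma,\eta;\K_0}^{(\eps)}$, using also \eqref{eq:relationDgammanorms} from Assumption~\ref{ass:weightedDgamma} for the small-scale piece (the hypothesis $d_\s(\K_0,P)\geq\diam(\K_0)\in(\eps,1]$ holds because $\rho\geq\bfc\eps+2\delta$ and $\diam(\K_0)\lesssim\delta\geq\eps$; if $\diam(\K_0)\leq\eps$ one instead bounds directly). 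Now apply Theorem~\ref{thm:reconstruction}, estimate~\eqref{eq:reconstruction}, with $\K=\K_0$: it gives $|\iota_\eps(\CR^\eps f-\Pi_z^\eps f(z))(\eta_z^\delta)|\lesssim \delta^\gamma\|\Pi^\eps\|_{\gamma;\bar\K_0}^{(\eps)}\$f\$_{\gamma;[\eta_z^\delta]}^{(\eps)}\lesssim \delta^\gamma\rho^{\eta-\gamma}$, which is \eqref{eq:recwithweights1}. So Item 1 is essentially a localisation-plus-comparison corollary of the unweighted reconstruction theorem; no new telescoping is needed.

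\textbf{Proof of Item 2.} This is the genuinely new part. Fix $\delta\in[\eps,1]$, $z\in\K$, with no constraint on $\supp\eta_z^\delta$; in particular $\eta_z^\delta$ may straddle or sit close to $P$. Let $\rho=d_\s(z,P)$ and split into two regimes. If $\rho \geq K(\bfc\eps+2\delta)$ for a suitable fixed $K$, then Item~1 applies and gives $|\iota_\eps(\CR^\eps f-\Pi_z^\eps f(z))(\eta_z^\delta)|\lesssim \rho^{\eta-\gamma}\delta^\gamma\lesssim \delta^{\alpha\wedge\eta}$ (using $\rho\gtrsim\delta$, $\eta\leq\gamma$ and $\eta-\gamma\leq 0$), while $|\iota_\eps(\Pi_z^\eps f(z))(\eta_z^\delta)|\leq\sum_{\ell<\gamma}|\iota_\eps(\Pi_z^\eps\CQ_\ell f(z))(\eta_z^\delta)|\lesssim\sum_\ell\delta^\ell\|f(z)\|_\ell\lesssim\sum_\ell\delta^\ell\rho^{(\eta-\ell)\wedge0}$, and each term is $\lesssim\delta^{\alpha\wedge\eta}$ because on the support scale $\delta\lesssim\rho$ (for $\ell\geq\eta$ use $\delta^\ell\leq\delta^{\alpha\wedge\eta}$ since $\ell\geq\alpha$; for $\ell<\eta$ use $\delta^\ell\rho^{\eta-\ell}\lesssim\delta^\eta$). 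Adding gives \eqref{eq:recwithweights3}. In the near-diagonal regime $\rho < K(\bfc\eps+2\delta)$, i.e. $\delta\gtrsim\rho/(2K)-\bfc\eps/(2K)$ — more simply, when $\eta_z^\delta$ is supported within $O(\eps+\delta)$ of $P$ — we run the multiresolution decomposition of the proof of Theorem~\ref{thm:reconstruction} directly on $\iota_\eps(\CR^\eps f)(\eta_z^\delta)$: write it as the analogue of $\I+\II+\III$ but now with $\III$ replaced by the term $\sum_{z_{n_0}}\iota_\eps(\Pi_{z_{|n_0}}^\eps f(z_{|n_0}))(\tilde\Psi_{z,[z_{n_0}]}^{\delta,n_0})$ (there is no ``$-\Pi_z^\eps f(z)$'' to subtract). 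The telescopic terms $\I$ and $\II$ are estimated exactly as before, but using the weighted bound: for a pair $(z_{|k},z_{|k+1})$ at a dyadic scale $2^{-k}$ whose points sit at $P$-distance $\asymp\rho_k:=\max(2^{-k},\rho)$, the $\CD^{\gamma,\eta}$ definition gives $\|f(z_{|k+1})-\Gamma^\eps_{z_{|k+1}z_{|k}}f(z_{|k})\|_\ell\lesssim 2^{-k(\gamma-\ell)}\rho_k^{\eta-\gamma}$ when $2^{-k}\geq\eps$, and the Assumption~\ref{ass:weightedDgamma} small-scale bound when $2^{-k}<\eps$ (this is where $\$\cdot\$_{\gamma,\eta;\K_\eps;\eps}$ enters, plugged through Assumption~\ref{a:rec}/\ref{ass:Rsmallscalesweighted}); summing $\sum_{\ell<\gamma}2^{-k\ell}\cdot2^{-k(\gamma-\ell)}\rho_k^{\eta-\gamma}\cdot(\#z_k\text{'s})$ and then $\sum_k$ produces, because $\alpha\wedge\eta>-\mathfrak m$ controls the volume growth near $P$, a convergent series dominated by $\delta^{\alpha\wedge\eta}$. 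The boundary term ($\I$ at scale $\eps$) is controlled by the first bound of Assumption~\ref{ass:Rsmallscalesweighted}, $\|\CR^\eps f\|_{\alpha\wedge\eta;\K_\eps;z;\eps}\lesssim\$f\$_{\gamma,\eta;\K_\eps}^{(\eps)}$, via \eqref{eq:relationtoseminorm}, giving a contribution $\lesssim\eps^{\alpha\wedge\eta}(\eps/\delta)^{?}\lesssim\delta^{\alpha\wedge\eta}$ after summing over the $O((2^N\delta)^{|\s|})$ cells; the ``$\III$'' term involving $\Pi_{z_{|n_0}}^\eps f(z_{|n_0})$ at the coarsest scale is bounded by the same polynomial-in-$\delta$ argument as in the far regime. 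The main obstacle is precisely this bookkeeping of the $P$-weights through the telescoping sum when $2^{-k}$ crosses both $\eps$ and $\rho$: one must track the three scale regimes $2^{-k}\geq\rho$, $\eps\leq 2^{-k}<\rho$, $2^{-k}<\eps$ separately and check that the geometric series in each closes with the borderline exponent $\alpha\wedge\eta>-\mathfrak m$ — this is the weighted analogue of \cite[Thm~6.9 or 6.17]{Regularity} and follows the same template, but the presence of the genuinely separate sub-$\eps$ seminorm (rather than a continuation of the same modulus of continuity) means one cannot literally cite that proof and must redo the scale-$\eps$ matching by hand.

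\textbf{Proof of Items 3 and 4.} These are obtained by repeating the arguments for Items 1 and 2 with every object replaced by its ``difference'' version, exactly as \eqref{eq:differencelargescale} and \eqref{eq:reconstrcontdiscrete} were deduced from \eqref{eq:reconstruction} in the proof of Theorem~\ref{thm:reconstruction}: one uses the algebraic identity \eqref{eq:pidifference} to split each telescopic increment into a term acted on by $\bar\Pi^\eps$ (paired with $\$f;\bar f\$_{\gamma,\eta}$) and a term acted on by $\Pi^\eps-\bar\Pi^\eps$ (paired with $\$f\$_{\gamma,\eta}$), and one invokes the difference parts of Assumption~\ref{ass:Rsmallscalesweighted} (the second and third displayed bounds there) for the scale-$\eps$ boundary contribution. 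The weighted estimates \eqref{eq:relationDgammanorms} and Definition~\ref{rem:comparisonweigthed} (resp.\ Definition~\ref{def:weightedDgammacontdisc}) supply the comparison between weighted and unweighted difference norms needed to feed Theorem~\ref{thm:reconstruction} in the far regime. For Item~4 the only change is that the sub-$\eps$ comparison is between a continuous model tested against $\eta_z^\eps$ and a discrete one, which is exactly the content of the last display of Assumption~\ref{ass:Rsmallscalesweighted}; the near-$P$ telescoping is identical since continuous models satisfy \eqref{eq:Pi}--\eqref{eq:Gamma} for all $\lambda,\|z-z'\|_\s\in(0,1]$, so no new convergence issue arises. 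In all of Items 3 and 4, no new analytic difficulty appears beyond that already met in Item 2; the proportionality constants come out as stated in \eqref{eq:propconstweighted} and its obvious continuous-model analogue.
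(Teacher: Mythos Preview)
Your treatment of Item~1 is correct and matches the paper's: localise to a ball $\K_0\supset[\eta_z^\delta]$ at $P$-distance $\gtrsim\rho$, use Assumption~\ref{ass:weightedDgamma} to pass from the weighted to the unweighted $\CD^\gamma$-norm at cost $\rho^{\eta-\gamma}$, and feed Theorem~\ref{thm:reconstruction}. The far-regime half of Item~2 is likewise fine.

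The near-regime argument for Item~2, however, has a genuine gap. You propose to rerun the \emph{uniform} multiresolution telescoping of Theorem~\ref{thm:reconstruction} on $\iota_\eps(\CR^\eps f)(\eta_z^\delta)$, and you assert that a pair $(z_{|k},z_{|k+1})$ at dyadic scale $2^{-k}$ ``sits at $P$-distance $\asymp\rho_k=\max(2^{-k},\rho)$''. This is false: the $z_{|k}$'s are arbitrary points of $[\eta_z^\delta]$ near the lattice point $z_k\in\Lambda_k^\s$, and when $\eta_z^\delta$ straddles $P$ many of these cells have $P$-distance $\ll 2^{-k}$. For such pairs $(z_{|k},z_{|k+1})\notin\K_P$, the $\CD^{\gamma,\eta}$-norm gives \emph{no} control on $\|f(z_{|k+1})-\Gamma^\eps_{z_{|k+1}z_{|k}}f(z_{|k})\|_\ell$ (Definition~\ref{def:weightedDgamma} restricts the second supremum to $\K_P$), so the bound you write down for the telescopic increment $\II$ is unavailable. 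The three ``scale regimes'' you list, $2^{-k}\geq\rho$, $\eps\leq 2^{-k}<\rho$, $2^{-k}<\eps$, are indexed only by $k$ and do not distinguish cells near $P$ from cells away from $P$ at a fixed $k$; that is the distinction that actually matters.

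The paper's remedy is to decompose in the \emph{direction normal to $P$} rather than uniformly. One introduces a smoothed $P$-distance $N_P$ and a dyadic partition $\tilde\Psi_n(y)=\tilde\Psi(2^nN_P(y))$, writing $\eta_z^\delta=\I_z+\II_z$ with $\I_z=\sum_{n_0\leq n\leq N}\eta_z^\delta\tilde\Psi_n$ supported in the annuli $\{N_P\asymp 2^{-n}\}$ and $\II_z=\sum_{n>N}\eta_z^\delta\tilde\Psi_n$ supported at $P$-distance $\lesssim\eps$. Each annular piece in $\I_z$ is then further localised tangentially (over $\Xi_P^n$) into test functions $\chi_{n,zy}^j$ whose support has radius comparable to its $P$-distance, so the already-proved Item~1 applies to each; adding the contribution $|\iota_\eps(\Pi_{z_{y,j}}^\eps f(z_{y,j}))(\chi_{n,zy}^j)|\lesssim 2^{-n(\alpha\wedge\eta)}$ and summing over $n$ (this is where $\alpha\wedge\eta>-\mathfrak m$ ensures convergence, as in \cite[Prop.~6.9]{Regularity}) yields $\lesssim\delta^{\alpha\wedge\eta}$. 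For $\II_z$ one partitions tangentially at scale $2^{-N}\asymp\eps$ and applies Assumption~\ref{ass:Rsmallscalesweighted} directly to $\CR^\eps f$ (no $\Pi^\eps f$ subtraction). The point is that Item~1 is used as a black box on each annulus, so no telescoping ever crosses $P$. Items~3 and~4 inherit the same structure; your reduction of those to Item~2 via \eqref{eq:pidifference} and the difference bounds in Assumption~\ref{ass:Rsmallscalesweighted} is correct once Item~2 is fixed.
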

\begin{remark}
We assume for the rest of this article that Assumptions~\ref{ass:weightedDgamma} and~\ref{ass:Rsmallscalesweighted} hold.
\end{remark}
\begin{proof}
The bound~\eqref{eq:recwithweights1} can be derived in the same way as \cite[Eq.~6.6]{Regularity}
(note that instead of using~\cite[Lem.~6.7]{Regularity} one may directly apply Theorem~\ref{thm:reconstruction}).
Thus, we assume from now on that $d_\s([\eta_z^{\delta}],P)< \bfc\eps+ 2\delta$.
We start by choosing a smooth function $\tilde\Psi:\R_+\to[0,1]$ such that $\tilde\Psi(\rho)=0$ for $\rho\notin [1/2,2]$ and such that
\begin{equation}
\label{eq:partofunity}
\sum_{n\in\Z}\tilde\Psi(2^n \rho)=1,\quad\mbox{for all }\rho>0.
\end{equation}
Furthermore, we let $\Psi:\R\to [0,1]$ be as in~\eqref{eq:summingtoone}. The final ingredients are a smooth mapping $N_P:\R^d\setminus P\to \R_+$ that satisfies the scaling relation $N_P(\CS_\s^{\delta^{-1}}z)=\delta N_P(z)$, depends only on $(z_1,\ldots, z_{\bar{d}})$, is such that for some constant $c>0$, the relation $c^{-1}N_P(z) \leq d_\s(z,P)\leq cN_P(z)$ holds for any $z\in\R^d$, and the sets $\Xi_P^n$ defined by
\begin{equation}
\label{eq:Xi}
\Xi_P^n=\{z\in\R^d:\, z_i=0\mbox{ for }i\leq \bar{d}\mbox{ and }z_i\in 2^{-n\s_i}\Z\mbox{ for }i>\bar{d}\}.
\end{equation}
We then define 
\begin{equation}
\label{eq:varphin}
\tilde\Psi_n(y)= \tilde\Psi(2^nN_P(y)).
\end{equation}
Let $n_0\in\Z$ be the greatest integer with $[\tilde\Psi_{k}]\cap [\eta_z^{\delta}]= \emptyset$ for all $k<n_0$ and $N$ be the greatest integer such that $d_\s([\tilde\Psi_N],P)\geq (1+\bfc)\varepsilon$. Note that the former (together with our additional assumption on the support of $\eta_z^{\delta}$ at the beginning of the proof) implies that $2^{-n_0}$ is of the order $\delta$, whereas the latter implies that $2^{-N}$ is of the order $\eps$. It then follows from~\eqref{eq:partofunity} that
\begin{equation}
\eta_z^{\delta} = \sum_{n_0\leq n\leq N}\eta_z^{\delta}\tilde\Psi_n + \sum_{n>N}\eta_z^{\delta}\tilde\Psi_n,
\end{equation}
which we write as $\I_z+\II_z$. 
Note that
\begin{equation}
\label{eq:Irewrite}
\I_z= \sum_{n_0\leq n\leq N}\sum_{y\in\Xi_P^n}\eta_z^{\delta}\tilde\Psi_n\Psi(2^{n\s_{\bar{d}+1}}(\cdot - y_{\bar{d}+1}))\cdot\ldots\cdot
\Psi(2^{n\s_{d}}(\cdot - y_{d})).
\end{equation}
To proceed define $\chi_{n,zy}(x)=\delta^{|\s|}2^{n|\s|}\eta_z^{\delta}(x)\tilde\Psi_n(x)\Psi(2^{n\s_{\bar{d}+1}}(x_{\bar d+1} - y_{\bar{d}+1}))\cdot\ldots\cdot
\Psi(2^{n\s_{d}}(x_d - y_{d}))$ and note that each $\chi_{n,zy}$ defines a test function. Applying a suitable partition of the identity we may write 
\begin{equation}
\chi_{n,zy}= \sum_{j=1}^{M}\chi_{n,zy}^{j}
\end{equation}
for some fixed constant $M$ and each $\chi_{n,zy}^{j}$ is supported in some ball with center $z_{y,j}\in(\R^d\setminus P)\cap [\chi_{n,zy}]$ and whose radius $r$ satisfies $\bfc\varepsilon+ 2r\lesssim d_\s(z_{y,j},P).$ 
Thus, by~\ref{eq:recwithweights1}, for each $j\in\{1,\ldots, M\}$,
\begin{equation}
\Big|\iota_\eps(\CR^{\eps}f-\Pi^\eps_{z_{y,j}}f(z_{y,j}))(\chi_{n,zy}^j)\Big|
\lesssim d_\s(z_{y,j},P)^{\eta-\gamma}2^{-\gamma n}\lesssim 2^{-\eta n}.
\end{equation}
Since $|\iota_\eps(\Pi^\eps_{z_{y,j}} f(z_{y,j}))(\chi_{n,zy}^j)|\lesssim 2^{-n(\alpha\wedge \eta)}$ and the fact that $2^{-n_0}\approx \delta$, we may conclude as in the proof of~\cite[Prop.~6.9]{Regularity}. To deal with $\II_z$ we 
multiply it by a partition of unity like so:
\begin{equation}
\label{eq:IIrewrite}
\II_z = \sum_{y\in \Xi_P^N}\II_z \, \Psi(2^{N\s_{\bar{d}+1}}(\cdot-y_{\bar{d}+1}))\cdot\ldots\cdot \Psi(2^{N\s_d}(\cdot-y_{d})).
\end{equation}
Define $\hat{\chi}_{N,zy}(x)= \delta^{|\s|}2^{N|\s|}\II_z(x) \Psi(2^{N\s_{\bar{d}+1}}(x_{\bar d+1}-y_{\bar{d}+1}))\cdot\ldots\cdot \Psi(2^{N\s_d}(x_d-y_{d}))$ and note that by Assumption~\ref{ass:Rsmallscalesweighted} and Equation~\ref{eq:relationtoseminorm},
\begin{equation}
|\iota_\eps(\CR^{\eps}f)(\hat{\chi}_{N,zy})|\lesssim \eps^{\alpha\wedge \eta} \|\CR^{\eps}f\|_{\alpha\wedge \eta;[\hat{\chi}_{N,zy}];z;\eps}
\lesssim \eps^{\alpha\wedge \eta}\$f\$_{\gamma,\eta;[\hat{\chi}_{N,zy}]}^{(\eps)}. 
\end{equation}
We may now finish as above. We omit the details.
Items \rm{3.} and \rm{4.} may be shown in a similar manner.
\end{proof}

%


\section{Convolution operators}
\label{S4}
In this section we explain how to convolve a modelled distribution with a discrete kernel.
Here, one should think of the kernel given by the Green's function of the linear part of the equation at hand.
It will be a standing assumption from now on that our regularity structure contains the polynomial regularity 
structure, and we write $\bar \CT$ for the span of the symbols $X^k$ representing the usual Taylor monomials. 
Throughout all of this section, the following assumption is in force.

\begin{assumption}
\label{a:model}
The regularity structure $\TT$ contains the polynomial regularity structure
$\bar\TT = (\bar \CT, \bar \CG, \N)$ corresponding to the scaling $\s$. 
We also assume that we are given a family of discrete ``polynomial models'' $(\Pi^\eps,\Gamma^\eps)$ on
$\TT$ converging to the canonical continuous polynomial model. 
\end{assumption}

\begin{remark}
It follows from our assumption that, for $\eps$ small enough, the map $\Pi_z$ is injective on $\bar \CT$
for every $z$. We henceforth assume that this is the case for all $\eps$, which is of course not a real assumption
since we are mostly interested in the case of $\eps \to 0$.
\end{remark}

We now describe the assumptions we want the kernel at hand to satisfy. Unfortunately these have a quite abstract appearance, so to illustrate what these really mean in practice we provide concrete examples in Remark~\ref{rem:example} below.  
Let $N$ be the smallest integer such that $2^{-N}\leq\eps$. Throughout this section we fix $\beta>0$. For $z\in\R$, and $\zeta\in\R$ we let $\CX_{\eps,\zeta,z}$ be the set of all $F\in\CX_\eps$ such that
\begin{equ}
|\iota_\eps(F)(\varphi_z^n)|\lesssim 2^{-n\zeta},
\quad\mbox{and}\quad
\|F\|_{\zeta; \K_\eps;z;\eps}\lesssim 1,
\end{equ}
for all scaled test functions $\varphi_{z}^{n}$ with $n\leq N$, and all compact sets $\K_\eps$ of diameter at most $2\eps$ containing $z$, and we require the proportionality constants to be independent of $n\leq N$. Recall at this point that we use the notation $\varphi_z^n=\varphi_z^{2^{-n}}$. We let $\CX_{\eps,\zeta}=\cup_{z\in\R}\CX_{\eps,\zeta,z}$.
We then assume that
there is a family of linear operators $K_n^{\eps}$ on $\CX_\eps$, as well as a family of linear operators $T_{n,\zeta}^{\eps} \colon \CX_{\eps,\zeta} \to (\bar\CT_{<\zeta+\beta})^{\R^d}$. 
For $F\in\CX_{\eps,\zeta,z}$ we use the notation 
\begin{equation}
\label{eq:projectionpoly}
(T_{n,\zeta}^{\eps}F)(z)= \sum_{|k|_{\s}<\zeta+\beta} X^k\, \CQ_{k}((T_{n,\zeta}^{\eps}F)(z))\;,
\end{equation}
and we abbreviate $K^{\eps}=\sum_{n=0}^{N}K_n^{\eps}$, and $T_{\zeta}^{\eps}= \sum_{n=0}^{N}T_{n,\zeta}^{\eps}$.
We then require the following.
\begin{assumption}
\label{a:TandK}
There is $\beta>0$ such that for all $n\leq N$, $\zeta,\zeta'\in \R$ with $\zeta'\leq \zeta$, all $y,z\in\R^d$ such that $\eps\leq \|y-z\|_\s\leq 2^{-n}$, and all $F\in\CX_{\eps,\zeta,z}\cap\CX_{\eps,\zeta,y},$
\begin{claim}
\item[1.] the consistency relation $\CQ_k((T_{n,\zeta}^{\eps} F)(z))= \CQ_k((T_{n,\zeta'}^{\eps}F)(z))$ holds for all multiindices $|k|_{\s} <  \zeta'+\beta$, 
\item[2.] one has the estimates
\begin{equation}
\label{eq:Tcomponentest}
\|(T_{n,\zeta}^{\eps}F)(z)\|_k\lesssim
\begin{cases} 
2^{n(|k|_\s-\beta)}\sup_{\varphi \in \Phi}|(\iota_\eps F)(\varphi_z^n)|, &\mbox{if }n\leq N-1,\\
2^{N(|k|_\s-\beta-\zeta)}\|F\|_{\zeta;\K_\eps;z;\eps}, &\mbox{if }n=N,
\end{cases}
\end{equation}
where in the latter estimate $\K_\eps$ denotes a ball of radius $\eps$ centred at $z$,
\item[3.] one has the estimate
\begin{equation}
\label{eq:Tdifferentest}
\begin{aligned}
\big\|&(T_{n,\zeta}^{\eps} F)(z)-\Gamma_{zy}^{\eps}(T_{n,\zeta}^{\eps}F)(y)\big\|_k\\
&\lesssim 2^{n(\lceil \zeta+\beta \rceil -\beta)}\|y-z\|_{\s}^{\lceil \zeta+\beta \rceil-|k|_\s}
\sup_{\varphi \in \Phi}
|(\iota_{\eps}F)(\varphi_{z}^{n-1})|,
\end{aligned}
\end{equation}
which only needs to be satisfied for $n\leq N-1$,
\item[4.] one has the estimate
\begin{equation}
\label{eq:relationKandT}
\begin{aligned}
\sum_{n\leq N}\| K_n^{\eps}F -&\Pi_z^\eps (T_{n,\zeta}^{\eps}F)(z)\|_{\zeta+\beta;\K_\eps;z;\eps}\\
&\lesssim 
\sup_{n\leq N} 2^{n\zeta}\sup_{\substack{\varphi  \in \Phi\\ z'\in\K_\eps}}|\iota_\eps(F)(\varphi_{z'}^{n-1})|
+\sup_{h\in \cB_{\s}(z,2\eps)}\|F\|_{\zeta;\K_\eps;z+h;\eps},
\end{aligned}
\end{equation}
locally uniformly over all compact sets $\K_\eps$ of diameter bounded by $2\eps$,
\item[5.] uniformly over $\phi\in\Phi$ and $\delta\in (\eps,1]$ one has the estimates
\begin{equs}
|\iota_\eps(K_n^\eps F)(\phi_z^\delta)|&\lesssim |\langle \CQ_0((T_{n,0}^\eps F)(\cdot)),\phi_z^\delta\rangle|,\\
|\iota_\eps(K_n^\eps F)(\phi_z^\delta)|&\lesssim 2^{-n\beta}\sup_{\eta\in\Phi}|(\iota_\eps F)(\eta_z^{2\delta})|,
\end{equs}
where the second inequality above is only required for $2^{-n}\leq \delta$.
The proportionality constants above are uniform over $y,z$ and $n$.
\end{claim}
\end{assumption}
\begin{remark}\label{rem:fourthitem}
The fourth item above is only needed in the proof of Theorem~\ref{thm:Schauder} to show that the reconstruction operator and the convolution operator $\CK_\gamma^\eps$ (whose definition will be given in the sequel) do essentially commute (see Theorem~\ref{thm:Schauder} for a more precise statement).
It however follows from Remark~\ref{rem:A} and~\ref{rem:commute} that in many cases of interest this follows immediately from the respective definitions, in which case the fourth item above is unnecessary.
	\end{remark}
As in~\cite{Regularity} we impose that the kernels $K_n^{\eps}$ kill polynomials up to a sufficiently high degree. In our setting this may be formulated as follows.
\begin{assumption}
\label{a:killpoly}
There is a $\sigma>0$ such that for all $n\leq N$, all $z \in \R^d$, and all $|k|_\s\leq \sigma$, one has
$K_n^{\eps}\Pi_z^\eps X^k=0$ and $(T_{n,\zeta}^{\eps}\Pi_z^\eps X^k)(z) = 0$.
\end{assumption}
We remind the reader of the following definition from~\cite{Regularity}.
\begin{definition}
\label{def:I}
Given a sector $V$, a linear map $\CI:V\to \CT$ is an abstract integration map of order $\beta>0$, if the following properties are satisfied:
\begin{claim}
\item[1.] One has $\CI:V_\zeta\to \CT_{\zeta+\beta}$, and this mapping is continuous.
\item[2.] One has $\CI\tau=0$ for all $\tau\in V\cap \bar\CT$.
\item[3.] One has $\Gamma\CI-\CI\Gamma\in \bar\CT$ for every $\Gamma\in G$.
\end{claim}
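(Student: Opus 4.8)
The plan is to read Definition~\ref{def:I} as the abstract counterpart, in the discrete framework, of convolution against the kernel $K^\eps$ of Section~\ref{S4}; since the definition itself is imported verbatim from \cite[Sec.~5]{Regularity}, the content to be supplied here is that the concrete data $K^\eps$, $T^\eps$ of Assumptions~\ref{a:TandK}--\ref{a:killpoly}, together with the discrete model $(\Pi^\eps,\Gamma^\eps)$, do realise such a map and an associated extension of the model to the sector $\CI(V)$. I would construct $\CI$ exactly as in the continuous theory: freely adjoin to $\TT$ symbols $\CI\tau$ of homogeneity $|\tau|+\beta$ for $\tau\in V$, subject to $\CI\tau=0$ whenever $\tau\in V\cap\bar\CT$, extend the structure group by
\[
\Gamma^\eps_{xy}\,\CI\tau \;=\; \CI\bigl(\Gamma^\eps_{xy}\tau\bigr) + \cJ^\eps_{xy}\tau\;,
\]
where $\cJ^\eps_{xy}\tau\in\bar\CT_{<|\tau|+\beta}$ is the polynomial correction read off from the difference of the discrete Taylor jets, i.e.\ from $(T^\eps_{|\tau|}\Pi^\eps_y\tau)(y)$ and its transport by $\Gamma^\eps_{xy}$ --- the discrete analogue of the ``positive twist'' of \cite[Sec.~5]{Regularity} --- and extend the model by $\Pi^\eps_z\CI\tau = K^\eps\Pi^\eps_z\tau - \Pi^\eps_z\bigl((T^\eps_{|\tau|}\Pi^\eps_z\tau)(z)\bigr)$, which takes values in $\CX_\eps$ rather than in $\CD'(\R^d)$. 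With this choice item~2 of Definition~\ref{def:I} holds by fiat and item~3 holds by the very definition of the extended $\Gamma^\eps$; the work is in verifying item~1 and the model axioms.

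The steps, in order: \textbf{(i)} show that $\cJ^\eps_{xy}\tau$ has components only in homogeneities strictly below $|\tau|+\beta$ and depends continuously (hence, on the finitely many relevant levels, boundedly) on $\tau$ --- this uses the consistency relation in Assumption~\ref{a:TandK}(1) and the component bound~\eqref{eq:Tcomponentest} --- which yields item~1. \textbf{(ii)} Check that the prescription for $\Gamma^\eps$ on $\CI(V)$ is a genuine group action, $\Gamma^\eps_{xy}\Gamma^\eps_{yz}=\Gamma^\eps_{xz}$; this reduces to a cocycle identity for $\cJ^\eps$ which I would extract from Assumption~\ref{a:TandK}(1) and~(3), the latter being the transformation rule~\eqref{eq:Tdifferentest} of $T^\eps$ under $\Gamma^\eps$. \textbf{(iii)} Check that $\Pi^\eps_z=\Pi^\eps_y\Gamma^\eps_{yz}$ persists on $\CI(V)$, which is again Assumption~\ref{a:TandK}(1), and that Assumption~\ref{a:killpoly} (for $\sigma$ large enough) rules out spurious low-degree contributions on both the $\Pi^\eps$- and the $T^\eps$-side, which is also what makes item~2 consistent with the model. \textbf{(iv)} Verify the two analytic model bounds of Definition~\ref{def:model} for $\CI\tau$: the bound on $\bigl(\iota_\eps\Pi^\eps_z\CI\tau\bigr)(\CS_{\s,z}^{\lambda}\phi)$ at scales $\lambda\in(\eps,1]$ and the small-scale seminorm bound $\|\Pi^\eps_z\CI\tau\|_{|\tau|+\beta;\K_\eps;z;\eps}\lesssim1$ follow from Assumption~\ref{a:TandK}(5) and~\eqref{eq:relationKandT} together with~\eqref{eq:relationtoseminorm}, while the $\Gamma^\eps$-bounds of~\eqref{eq:Gamma} follow from~\eqref{eq:Tdifferentest} and step~(i).

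The main obstacle is step~(ii): proving that the extended $\Gamma^\eps$ is multiplicative. In the continuous setting this follows automatically from the co-associativity of the structure-group coproduct, but here $\cJ^\eps$ is only specified through the concrete operator $T^\eps$ on $\CX_\eps$, so the needed algebraic identity must be recovered from the analytic consistency conditions of Assumption~\ref{a:TandK}. Extra care is required because those conditions only constrain scales $\|y-z\|_\s\ge\eps$: the sub-$\eps$ behaviour has to be tracked separately through the $\CX_\eps$-seminorms and absorbed via~\eqref{eq:relationtoseminorm}, so that the cocycle identity is only needed modulo errors at scale $\eps$ --- which is precisely the sense in which Definition~\ref{def:model} asks for it.
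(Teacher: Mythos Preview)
The statement you are addressing is Definition~\ref{def:I}, which is a \emph{definition} recalled from \cite{Regularity}; it requires no proof. The three items listed are simply the axioms that a linear map $\CI$ must satisfy in order to be called an abstract integration map. The paper does not supply a proof here because none is needed.

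What you have actually sketched is a proof of Theorem~\ref{thm:extension}, the extension theorem guaranteeing that a model realising $K^\eps$ for $\CI$ on a subsector $W$ can be extended to a larger sector $V$. Your outline is broadly aligned with the paper's proof of that theorem: the paper defines $\hat\Pi^\eps_z\hat\CI(\tau)$ and $\hat\Gamma^\eps_{zy}\hat\CI(\tau)$ by the formulae~\eqref{eq:defPiandGamma}, defers the purely algebraic part (that $\hat\CI$ satisfies Definition~\ref{def:I}) to \cite[Thm~5.14]{Regularity}, and then verifies the analytic model bounds of Definition~\ref{def:model} via Assumption~\ref{a:TandK} and Lemma~\ref{lem:nonpolyextension}.

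Your identification of step~(ii), the cocycle identity $\hat\Gamma^\eps_{xy}\hat\Gamma^\eps_{yz}=\hat\Gamma^\eps_{xz}$, as the main obstacle is a misreading. In the paper this is \emph{not} recovered from the analytic conditions of Assumption~\ref{a:TandK}; it follows purely algebraically from the defining formula~\eqref{eq:defPiandGamma}, the cocycle identity already assumed on $V$, and the injectivity of $\Pi^\eps_z$ on $\bar\CT$ (guaranteed by Assumption~\ref{a:model}, and used in the same way as in Lemma~\ref{lem:commutationrelation}). The paper accordingly leaves it ``as an exercise''. Your worry that the identity ``is only needed modulo errors at scale $\eps$'' is therefore misplaced: the algebraic constraints in Definition~\ref{def:model} are exact identities, not approximate ones, and they hold exactly here. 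The genuinely delicate part of Theorem~\ref{thm:extension} is rather the large-scale analytic bound on $\hat\Pi^\eps_z\hat\CI(\tau)$ when $|\tau|+\beta>0$, which the paper handles by a multiscale decomposition mirroring the proof of Theorem~\ref{thm:reconstruction}.
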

\end{definition}  

\begin{definition}
\label{def:PirealisesK}
Fix a sector $V$ and an abstract integration map $\CI$ on it. We say that $\Pi^\eps$ realises $K^\eps$ for $\CI$, if for every $\zeta\in A$, every $\tau\in V_{\zeta}$, and every $z\in\R^d$,
\begin{equation}
\label{eq:PirealisesK}
\Pi_z^\eps \CI(\tau) = K^{\eps} \Pi_z^\eps \tau -  \Pi_z^\eps (T_{|\tau|}^{\eps} \Pi_z^\eps\tau)(z).
\end{equation} 
We furthermore require that $\Pi^\eps$ agrees with the discrete polynomial model from Assumption~\ref{a:model}
on $\bar \CT$.
\end{definition}

With all of these definitions at hand, we are now in the position to provide the
definition of the ``convolution map'' $\CK^{\eps}$ on modelled distributions announced at the beginning of this
section. As in~\cite{Regularity} it turns out that for different values of $\gamma$ one should use slightly
different definitions. Given $f\in\CD^{\gamma}_\eps$, we set
\begin{equation}
\label{eq:convolution}
\CK_{\gamma}^{\eps} f(z)= \CI f(z) +\sum_{\zeta\in A}(T_{\zeta}^{\eps}\Pi_z^\eps\CQ_{\zeta}f(z))(z)
+(T_{\gamma}^{\eps}(\cR^\eps f-\Pi_z^\eps f(z)))(z).
\end{equation}
Before we state one of the main results of this article we need one more assumption. 
\begin{assumption}
\label{a:Schauersmallscale}
Let $\TT=(A,\CT, G)$ be a regularity structure, let $V$ be a sector of $\CT$, let
$(\Pi^\eps,\Gamma^\eps)$ be a discrete model, and fix $\gamma,\beta> 0$. We assume that,
for $f\in\CD^{\gamma}_\eps(V,\Gamma^\eps)$
and any compact set $\K$,
\begin{equation}
\label{eq:Schaudersmallscale}
\$ \CK_{\gamma}^{\eps} f\$_{\gamma+\beta;\K;\eps}\lesssim \|\Pi^\eps\|_{\gamma;\dbar\K}^{(\eps)}\$ f\$_{\gamma;\dbar\K;\eps}.
\end{equation}
Let $(\bar\Pi^\eps,\bar\Gamma^\eps)$ be a second discrete model for $\TT$, denote by $\bar\CK_\gamma^\eps$ the associated convolution operator, and let $\bar f\in\CD_\eps^{\gamma}(V,\bar\Gamma^\eps)$. We assume that,
\begin{equation}
\label{eq:Schauderdifsmallscale}
\$ \CK_{\gamma}^{\eps} f;\bar\CK_\gamma^\eps\bar f\$_{\gamma+\beta;\K;\eps}\lesssim \|\Pi^\eps\|_{\gamma;\dbar\K}^{(\eps)}\$ f;\bar f\$_{\gamma;\dbar\K}^{(\eps)} + \|\Pi^\eps-\bar\Pi^\eps\|_{\gamma;\dbar\K}^{(\eps)}\$f\$_{\gamma;\dbar\K}^{(\eps)}.
\end{equation} 
In both estimates the proportionality constant is supposed to be uniform in $\eps >0$ and we remind the reader that $\dbar\K$ denotes the 2-fattening of $\K$.
\end{assumption}

\begin{theorem}
\label{thm:Schauder}
Let $\TT=(A,\CT,G)$ be a regularity structure and $(\Pi^\eps,\Gamma^{\eps})$ be a model for 
$\TT$ satisfying Assumption~\ref{a:model} and fix a compact set $\K$. Let $\beta>0$ and assume that there are operators $K_n^{\eps}$ and 
$T_{n,\zeta}^{\eps}$ satisfying Assumption~\ref{a:TandK}, let $\CI$ be an abstract integration map acting on some sector $V$
and let $Z^\eps=(\Pi^{\eps},\Gamma^{\eps})$ be a discrete model realising $K^{\eps}=\sum_{n=0}^{N}K_n^{\eps}$ for $\CI$. Let furthermore $\gamma>0$, assume that Assumption~\ref{a:killpoly} is satisfied for $\sigma=\gamma$ and define the operator
 $\CK_{\gamma}^{\eps}$ by~\eqref{eq:convolution}. Then provided that $\gamma+\beta\notin\N$, and that $\CK_{\gamma}^{\eps}$ satisfies Assumption~\ref{a:Schauersmallscale} we have that $\CK_{\gamma}^{\eps}$ maps $\CD^{\gamma}_\eps(V)$ into
$\CD^{\gamma+\beta}_\eps(V)$ and there is an operator $A^{\eps}:\CD^{\gamma}_\eps(V)\to \CX_\eps$ such that the identity
\begin{equation}
\label{eq:convolutionidentity}
\CR^{\eps}\CK_{\gamma}^{\eps}f= K^{\eps}\CR^{\eps}f+ A^{\eps}f
\end{equation}
holds for every $f\in\CD^{\gamma}_\eps(V)$. The operator $A^{\eps}$ satisfies the estimate
\begin{equation}
\label{eq:A}
\|A^{\eps}f\|_{\gamma+\beta;\K_\eps;z;\eps}\lesssim \$ f\$_{\gamma;\dbar\K_\eps}^{(\eps)}
\end{equation}
locally uniformly over compact sets $\K_\eps$ of diameter bounded by $2\eps$ and over $z\in\R^d$.
The proportionality constant depends on the norm of the model, but is independent of $\eps$ otherwise.
If $\bar Z^\eps=(\bar\Pi^{\eps},\bar\Gamma^{\eps})$ is a second model satisfying Assumption~\ref{a:model} and realising $K^{\eps}$ for $\CI$ such that Assumption~\ref{a:Schauersmallscale} is satisfied, then one has for every $f\in\CD^{\gamma}_\eps(V,\Gamma^{\eps})$ and $\bar f\in\CD^{\gamma}(V,\bar \Gamma^{\eps})$ the bound
\begin{equation}
\label{eq:convolutiondifference}
\$ \CK_{\gamma}^{\eps} f; \bar\CK_{\gamma}^{\eps}\bar f\$^{(\eps)}_{\gamma+\beta;\K}
\lesssim \|\bar\Pi^\eps\|_{\gamma;\dbar\K}^{(\eps)}\$f;\bar f\$^{(\eps)}_{\gamma;\dbar\K}+ \|\Pi^{\eps}-\bar\Pi^{\eps}\|^{(\eps)}_{\gamma;\dbar\K}\$f\$_{\gamma;\dbar\K}^{(\eps)},
\end{equation}
uniformly in $\eps$. 
\end{theorem}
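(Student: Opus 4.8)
\emph{Reduction to a large-scale bound.} The claim that $\CK_{\gamma}^{\eps}$ maps $\CD_\eps^{\gamma}(V)$ into $\CD_\eps^{\gamma+\beta}(V)$ amounts to controlling the two contributions to $\$\CK_{\gamma}^{\eps}f\$_{\gamma+\beta;\K}^{(\eps)}$ in~\eqref{eq:Dgamma}. The ``below-$\eps$'' term $\$\CK_{\gamma}^{\eps}f\$_{\gamma+\beta;\K;\eps}$ is exactly what Assumption~\ref{a:Schauersmallscale} provides, so the analytic work is the ``above-$\eps$'' term: to bound $\|\CK_{\gamma}^{\eps}f(z)-\Gamma_{zy}^{\eps}\CK_{\gamma}^{\eps}f(y)\|_m$ by a fixed multiple of $\|\Pi^\eps\|_{\gamma;\dbar\K}^{(\eps)}\$f\$_{\gamma;\dbar\K}^{(\eps)}\,\|y-z\|_\s^{\gamma+\beta-m}$ for every $m<\gamma+\beta$ and every $y,z\in\K$ with $\eps\le\|y-z\|_\s\le1$. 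Since $K^\eps=\sum_{n=0}^N K_n^\eps$ with $2^{-N}\asymp\eps$, all sums over scales $n$ are finite and bounded below by a scale $\asymp\eps$; this is precisely why only above-$\eps$ information enters here. Finiteness of the sums defining $\CK_{\gamma}^{\eps}f(z)$ is clear since $f(z)\in\CT_{<\gamma}$ and $T_\gamma^\eps=\sum_{n\le N}T_{n,\gamma}^\eps$, and $\CK_{\gamma}^{\eps}f(z)$ takes values in $\CI V+\bar\CT$, which we absorb into $V$ as in~\cite{Regularity}.

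\emph{Algebraic decomposition.} Write $\CK_{\gamma}^{\eps}f(z)=\CI f(z)+\CJ^\eps(z)f(z)+(\cN_\gamma^\eps f)(z)$, where $\CJ^\eps(z)\tau:=\sum_{\zeta\in A}(T_\zeta^\eps\Pi_z^\eps\CQ_\zeta\tau)(z)$ and $(\cN_\gamma^\eps f)(z):=(T_\gamma^\eps(\CR^\eps f-\Pi_z^\eps f(z)))(z)$, cf.~\eqref{eq:convolution}. For the $\CI$-part, $\CI f(z)-\Gamma_{zy}^\eps\CI f(y)=\CI(f(z)-\Gamma_{zy}^\eps f(y))+(\CI\Gamma_{zy}^\eps-\Gamma_{zy}^\eps\CI)f(y)$; the first summand is handled by $\CD_\eps^\gamma$-regularity of $f$ after applying $\CI$, which raises homogeneity by $\beta$ (Definition~\ref{def:I}(1)), while the second lies in $\bar\CT$ (Definition~\ref{def:I}(3)) and has to be matched against the $\CJ^\eps$-terms. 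Using $\Pi_z^\eps=\Pi_y^\eps\Gamma_{yz}^\eps$, the consistency relation of Assumption~\ref{a:TandK}(1), and Assumption~\ref{a:killpoly} with $\sigma=\gamma$ (so that the $T_n^\eps$ annihilate the images of the $X^k$, keeping everything well defined on the sector), one rewrites $\CK_{\gamma}^{\eps}f(z)-\Gamma_{zy}^\eps\CK_{\gamma}^{\eps}f(y)$ as $\CI(f(z)-\Gamma_{zy}^\eps f(y))$ plus a finite sum over $n\le N$ of terms of the form $(T_{n,\zeta}^\eps G)(z)-\Gamma_{zy}^\eps(T_{n,\zeta}^\eps G)(y)$ and $(T_{n,\zeta}^\eps G)(y)$, with $G$ equal to either $\Pi_z^\eps\CQ_\zeta f(z)$ (for $\zeta<\gamma$) or $\CR^\eps f-\Pi_z^\eps f(z)$. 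This is the discrete counterpart of the rearrangement in~\cite[Thm~5.12]{Regularity}.

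\emph{Scale splitting.} Fix $y,z$ with $r:=\|y-z\|_\s\in[\eps,1]$ and split the $n$-sum according to whether $2^{-n}\ge r$ or $2^{-n}<r$. On the coarse scales $2^{-n}\ge r$ we apply Assumption~\ref{a:TandK}(3), namely $\|(T_{n,\zeta}^\eps G)(z)-\Gamma_{zy}^\eps(T_{n,\zeta}^\eps G)(y)\|_m\lesssim 2^{n(\lceil\zeta+\beta\rceil-\beta)}r^{\lceil\zeta+\beta\rceil-m}\sup_\varphi|(\iota_\eps G)(\varphi_z^{n-1})|$, combined with $\sup_\varphi|(\iota_\eps G)(\varphi_z^n)|\lesssim 2^{-n\zeta}$, which holds from the model bound~\eqref{eq:Pi} when $G=\Pi_z^\eps\CQ_\zeta f(z)$ and from Theorem~\ref{thm:reconstruction} together with Assumption~\ref{a:rec} (at scale $\eps$) when $G=\CR^\eps f-\Pi_z^\eps f(z)$; the resulting geometric series in $n$ sums to $\lesssim r^{\gamma+\beta-m}$, where the hypothesis $\gamma+\beta\notin\N$ ensures no exponent is critical. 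On the fine scales $r>2^{-n}\ge2^{-N}\asymp\eps$ we instead use the pointwise bound Assumption~\ref{a:TandK}(2), $\|(T_{n,\zeta}^\eps G)(z)\|_m\lesssim 2^{n(m-\beta)}\sup_\varphi|(\iota_\eps G)(\varphi_z^n)|$ (and likewise for the $y$-term), and the finite geometric sum again produces $\lesssim r^{\gamma+\beta-m}$. Together with Assumption~\ref{a:Schauersmallscale} this gives $\CK_{\gamma}^{\eps}f\in\CD_\eps^{\gamma+\beta}(V)$.

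\emph{Reconstruction identity, difference bound, and the main obstacle.} Set $A^\eps f:=\CR^\eps\CK_{\gamma}^{\eps}f-K^\eps\CR^\eps f\in\CX_\eps$, which is manifestly $z$-independent. Expanding $\Pi_z^\eps\CI f(z)=\sum_\zeta\Pi_z^\eps\CI\CQ_\zeta f(z)$ via Definition~\ref{def:PirealisesK} (using Assumption~\ref{a:TandK}(1) to identify the subscript $|\tau|$ with $\zeta$), the $\CJ^\eps$-term cancels and one gets $\Pi_z^\eps\CK_{\gamma}^{\eps}f(z)=K^\eps\Pi_z^\eps f(z)+\Pi_z^\eps(T_\gamma^\eps G_z)(z)$ with $G_z:=\CR^\eps f-\Pi_z^\eps f(z)$, whence
\begin{equation*}
A^\eps f=\bigl(\CR^\eps\CK_{\gamma}^{\eps}f-\Pi_z^\eps\CK_{\gamma}^{\eps}f(z)\bigr)-\sum_{n\le N}\bigl(K_n^\eps G_z-\Pi_z^\eps(T_{n,\gamma}^\eps G_z)(z)\bigr).
\end{equation*}
In the seminorm $\|\cdot\|_{\gamma+\beta;\K_\eps;z;\eps}$, the first bracket is $\lesssim\|\Pi^\eps\|_{\gamma+\beta;\bar\K_\eps}^{(\eps)}\$\CK_{\gamma}^{\eps}f\$_{\gamma+\beta;\K_\eps;\eps}\lesssim\$f\$_{\gamma;\dbar\K_\eps}^{(\eps)}$ by Assumption~\ref{a:rec} applied to $\CK_{\gamma}^{\eps}f\in\CD_\eps^{\gamma+\beta}$ and Assumption~\ref{a:Schauersmallscale}, while the second bracket is exactly the left-hand side of Assumption~\ref{a:TandK}(4) with $\zeta=\gamma$ (note $G_z\in\CX_{\eps,\gamma,z}$ by Theorem~\ref{thm:reconstruction}), hence $\lesssim\sup_{n\le N}2^{n\gamma}\sup_{\varphi,z'}|(\iota_\eps G_z)(\varphi_{z'}^n)|\lesssim\$f\$_{\gamma;\dbar\K_\eps}^{(\eps)}$. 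This proves~\eqref{eq:convolutionidentity}--\eqref{eq:A}; the two-model estimate~\eqref{eq:convolutiondifference} follows by rerunning the same three computations on $\CK_{\gamma}^{\eps}f-\bar\CK_{\gamma}^{\eps}\bar f$, invoking the ``comparison'' halves of Assumptions~\ref{a:rec}/\ref{a:reccomparedisc}, \ref{a:TandK} and~\ref{a:Schauersmallscale} and splitting $\Pi_z^\eps G-\bar\Pi_z^\eps\bar G=\bar\Pi_z^\eps(G-\bar G)+(\Pi_z^\eps-\bar\Pi_z^\eps)G$ at each product of a model with a modelled distribution; this is mechanical and we would only sketch it. The delicate point of the whole argument is the interplay of the second and third steps: tracking how the $\bar\CT$-valued defect of $\CI$ from Definition~\ref{def:I}(3) is exactly absorbed by the $T_n^\eps$-corrections built into~\eqref{eq:convolution}, and then checking that the finite geometric sums over $n\in\{0,\dots,N\}$, $N\asymp|\log\eps|$, converge \emph{uniformly in $\eps$} — this is where $\gamma+\beta\notin\N$ and the precise exponents $\lceil\zeta+\beta\rceil$ in~\eqref{eq:Tdifferentest} enter, and where one checks that truncating at $n=N$ costs nothing on scales $\ge\eps$.
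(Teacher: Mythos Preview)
Your proposal is correct and follows essentially the same route as the paper: reduce to large scales via Assumption~\ref{a:Schauersmallscale}, split the $n$-sum at $2^{-n}\sim\|y-z\|_\s$ using items~(2) and~(3) of Assumption~\ref{a:TandK}, and derive~\eqref{eq:A} from Assumption~\ref{a:rec} plus item~(4). The one place where the paper is more explicit is your ``matching'' of $(\CI\Gamma_{zy}^\eps-\Gamma_{zy}^\eps\CI)f(y)$ against the $\CJ^\eps$-terms: the paper isolates this as a separate commutation lemma, $\Gamma_{zy}^\eps(\CI a+\sum_n(T_{n,\zeta}^\eps\Pi_y^\eps a)(y))=\CI\Gamma_{zy}^\eps a+\sum_{n,\bar\zeta}(T_{n,\bar\zeta}^\eps\Pi_z^\eps\CQ_{\bar\zeta}\Gamma_{zy}^\eps a)(z)$, and proves it not from consistency and kill-poly alone but by applying $\Pi_z^\eps$ to both sides, using that $\Pi^\eps$ realises $K^\eps$ for $\CI$ together with the injectivity of $\Pi_z^\eps$ on $\bar\CT$ (Assumption~\ref{a:model}); your sketch should invoke these two ingredients at that step.
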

\begin{remark}\label{rem:smallscale}
Assumption~\ref{a:Schauersmallscale} will never be explicitely used in the proof of Theorem~\ref{thm:Schauder}. Therefore, the above theorem essentially states that given $f\in \CD_\eps^\gamma$, then $\CK_\gamma^\eps f$ satisfies the large scale estimate in the Definition~\ref{def:Dgamma}.	
	\end{remark}
\begin{remark}
	\label{rem:example}
	Examples of kernels satisfying all assumptions from this section are:
	\begin{claim}
		\item[\rm{1.}] the usual heat kernel, i.e., the fundamental solution to $\partial_t K(t,x)= \Delta K(t,x)$;
		\item[\rm{2.}] the usual discrete heat kernel, i.e., the fundamental solution to $\partial_t K(t,x)= \Delta^d K(t,x)$, where $\Delta^d$ is the discrete Laplacian acting on functions defined on $\Z^d$;
		\item[\rm{3.}] the fundamental solution to $\partial_t K^\eps (t,x) =(\Delta -\eps^2 \Delta ^2)K^{\eps}(t,x)$.
	\end{claim}
We will only give the arguments for the kernel $K^\eps$ in the third item. For the heat kernel (discrete heat kernel) the arguments then follow in the same way (but using the estimates in~\cite[Section 5]{MatetskiDiscrete} for the discrete heat kernel).
We assume that we are in the continuous case, and that the scaling is given by $\s=(2,1,\ldots,1)$. 
We claim that for $\|z\|_\s \ge \eps$ this kernel satisfies the bounds
\begin{equation}\label{e:boundKeps}
|D_x^\ell D_t^m K^\eps(z)|\lesssim \|z\|_\s^{-|\s|+2-|\ell| - 2|m|}\;.
\end{equation}
To show \eqref{e:boundKeps}, setting $z = (t,x)$ as usual, we distinguish between the 
case $t \le \eps^2$ and the case $t \ge \eps^2$ and we exploit the explicit form of $K^\eps$:
\begin{equ}
D_x^\ell D_t^m K^\eps = \widehat{f^\eps_{\ell,m}(\cdot,t)}\;,\quad
f^\eps_{\ell,m}(k,t) = (-1)^m i^\ell k^\ell (|k|^2 + \eps^2 |k|^4)^m e^{-(|k|^2 + \eps^2 |k|^4)t}\;.
\end{equ}
For $t \le \eps^2$, we exploit the identity
\begin{equ}
{1\over t^{d/2}} f^\eps_{\ell,m}(k/\sqrt t,t) = t^{-{|\ell|+2|m|+d \over 2}} g_{\ell,m}(\eps^2/t,k)\;,
\end{equ}
where we set 
\begin{equ}
g_{\ell,m}(c,k) = (-1)^m i^\ell k^\ell (|k|^2 + c |k|^4)^m e^{-(|k|^2 + c |k|^4)}\;.
\end{equ}
Since $g_{\ell,m}(c,\cdot)$ is a Schwartz function for every $c$ with all its seminorms bounded uniformly over 
$c \in [0,1]$, we deduce that there exists a uniformly bounded collection of Schwartz functions $G(c,\cdot)$ 
such that
\begin{equ}
D_x^\ell D_t^m K^\eps (t,x) = t^{-{|\ell|+2|m|+d \over 2}} G\Big({\eps^2\over t}, {x \over \sqrt t}\Big)\;.
\end{equ}
This immediately implies that
\begin{equ}
|D_x^\ell D_t^m K^\eps (t,x)| \lesssim {t^{-{|\ell|+2|m|+d \over 2}} \over 1 + |t^{-1/2} x|^{|\ell| + 2|m| + d}}
\lesssim \|z\|_\s^{-d-|\ell| - 2|m|}\;,
\end{equ}
as claimed.
For $t \le \eps^2$ we similarly note that 
\begin{equ}
{1\over (\eps^2t)^{d/4}} f^\eps_{\ell,m}(k/(\eps^2t)^{d/4},t) = \eps^{-{d+|\ell| \over 2}} t^{-{|\ell|+4|m|+d \over 4}} h_{\ell,m}(t/\eps^2,k)\;,
\end{equ}
for some collection $h_{\ell,m}(c,\cdot)$ of Schwartz functions, uniformly bounded over 
$c \in [0,1]$, so that for $t \le \eps^2$ one can write
\begin{equ}[e:sharpBound]
D_x^\ell D_t^m K^\eps (t,x) = \eps^{-{d+|\ell| \over 2}} t^{-{|\ell|+4|m|+d \over 4}} H\Big({t\over \eps^2}, {x \over (\eps^2 t)^{1/4}}\Big)\;,
\end{equ}
with $H$ having the same properties as $G$. This yields as before
\begin{equ}[e:sharpHomogeneousBound]
|D_x^\ell D_t^m K^\eps (t,x)| \lesssim {\eps^{2|m|} \over |\eps^2 t|^{|\ell|+4|m| + d\over 4} + |x|^{|\ell|+4|m|+d}}\;,
\end{equ}
which yields the required bound when combining it with $|t| \le \eps^2$ and $|x| \ge \eps$.
We moreover note that as a consequence of~\eqref{e:sharpBound} and~\eqref{e:sharpHomogeneousBound} the bound~\eqref{e:boundKeps} holds even for $\|z\|_\s\leq \eps$ provided that $m=\ell=0$.

With this estimate at hand we can then use the same techniques as in~\cite[Sections 5 and 7]{Regularity} to decompose the kernel $K^\eps$ as
\begin{equation}
K^\eps =R^\eps + K_N^\eps + \sum_{n = 1}^{N-1} K_n^\eps,
\end{equation}
where $R^\eps$ is a smooth (uniformly in $\eps$) 
remainder and each $K^\eps_n$ is supported in the set $\{z:\, \|z\|_\s\leq 2^{-n}\}$, and satisfies the estimate
\begin{equation}
\sup_z |D_x^\ell D_t^m K_n^\eps (t,x)|\lesssim 2^{n(|\s|-2 +|\ell| +2|m|)},
\end{equation}
but only for $\ell = m = 0$ when $n= N$.
We then define for any $\zeta\in\R$, and any $n\leq N$,
\begin{equation}
(T_{n,\zeta}^\eps F)(z)=
\begin{cases}
 \sum_{|k|_\s < \zeta+2} \frac{X^k}{k!}\int D^k K_n^\eps(z-y) F(y)\, dy, &\mbox{if }n\leq N-1,\\
  \delta_{\zeta+2 >0}\one\int K_N^\eps(z-y) F(y)\, dy, &\mbox{if }n=N,
\end{cases}
\end{equation}
and now we can finally show the validity of all the assumptions in this section. To that end we assume that $\zeta$ is such that $\zeta+2 >0$, since otherwise there is nothing to be shown.
The first item follows directly from the construction of $T_{n,\zeta}^\eps$. To see that the second item is satisfied note that for all $n\leq N-1$ and all multiindices $k$ the function
$z\mapsto 2^{n(2-|k|_\s)}D^k K_n^\eps(z)$ defines a scaled test function, so that we can write
\begin{equation}
\begin{aligned}
k!\|(T_{n,\zeta}^\eps F)(z)\|_k
&= \Big|\int D^k K_n^\eps(z-y) F(y)\, dy\Big|\\
&=2^{n(|k|_\s-2)}\Big|\int 2^{n(2-|k|_s)}D^k K_n^\eps(z-y) F(y)\, dy\Big|,
\end{aligned}
\end{equation}
which implies the first part in the estimate~\eqref{eq:Tcomponentest} (with $\beta=2$). 
For the small scale estimate, in the same way we see that
\begin{equ}
\|(T_{N,\zeta}^\eps F)(z)\|_0
=\Big|\int K_N^\eps(z-y) F(y)\, dy\Big|
\lesssim 2^{-N(2+\zeta)}\|F\|_{\zeta;\K_\eps;z;\eps}, 
\end{equ}
where we used the definition \eqref{e:defNormcont} of $\|\cdot\|_{\zeta;\K_\eps;z;\eps}$ to gain the additional factor
$2^{-\zeta N}$. Since $\|(T_{N,\zeta}^\eps F)(z)\|_k=0$ by definition for all multi-indices $k$ with at least one positive coordinate, 
we conclude that the second item in Assumption~\ref{a:TandK} is satisfied.
We turn to the third item in Assumption~\ref{a:TandK}. To that end we note that for $n\leq N-1$,
\begin{equation}
\begin{aligned}
k!\|&(T_{n,\zeta}^\eps F)(z)- \Gamma_{zy}^\eps (T_{n,\zeta}^\eps F)(y)\|_k\\
&=\Big|\int [D^k K_n^\eps(z-x)-\sum_{|\alpha+k|_\s< \zeta +2}\frac{1}{\alpha!}(z-y)^\alpha D^{k+\alpha}K_n^\eps(y-x)]F(x)\, dx\Big|,
\end{aligned}
\end{equation}
so that the proof of~\cite[Lemma 5.18]{Regularity} yields the desired estimate. Indeed, one may simply copy its proof (which is an application of Taylor's formula), but in \cite[Equ.~5.29]{Regularity} one uses the fact that $2^{n(2-|k+\ell|_\s)}D^{k+\ell} K_n^\eps(y+h-z)$ defines a test function with a support of radius $2^{n-1}$ centred around $y$.
Since we are in the continuous case, according to Remarks~\ref{rem:A} and~\ref{rem:commute} the fourth item is in practice not necessary, so that we omit its proof.
		The first part in the fifth item of Assumption~\ref{a:TandK} is a direct consequence of the construction of $T_{n,\zeta}^\eps$. Regarding the second part we note that for $n\leq N-1$, such that $2^{-n}\leq \delta$,
		\begin{equation}
		\iota_\eps (K_n^\eps F)(\varphi_z^\delta)= 
		\int \varphi_z^\delta (y) K_n^\eps (y-x) F(x)\, dx,
		\end{equation}
		and that $2^{2n}K_n^\eps(y-x)\lesssim 2^{n|s|}$, so that 
		\begin{equation}
		\int 2^{2n}K_n^\eps(y-x)\varphi_z^\delta (y)\, dy
		\end{equation}
		defines a test function with a support of radius $2\delta$ centred around $z$. The estimate in the case $n\leq N-1$ is then immediate. 
		To see that we also have the desired estimate for $n=N$ we simply use the fact that $\int K_N^\eps \lesssim 2^{-2N}$.
		It moreover follows from the proof of~\cite[Lemma 5.5]{Regularity} that one can modify the $K_n^\eps$'s such that additionally Assumption~\ref{a:killpoly} is satisfied. Finally, Assumption~\ref{a:Schauersmallscale} is actually a consequence of Theorem~\ref{thm:Schauder}. Indeed, according to Remark~\ref{rem:smallscale} Assumption~\ref{a:Schauersmallscale} is never explicitly used in its proof. Since in the continuous case the $\CD_\eps^\gamma$-norm on small scales involves division by powers of $\eps$ (and not the true distance between any chosen points $y$ and $z$), and since moreover in the the large scale part of the definition of the $\CD_\eps^\gamma$-spaces one is allowed to choose $y$ and $z$ such that their distance is of order $\eps$, a triangle inequality argument shows the validity of Assumption~\ref{a:Schauersmallscale}.
	\end{remark}
\begin{remark}
In the above example for the kernel $K^\eps$ we assumed that we are in the continuous case. This would be for example a good choice in the case of the $\Phi_4^3$-equation, since one can make sense of $K^\eps* \xi$, where $\xi$ denotes space-time white noise, ``by hand'' as a continuous function. One might however imagine other examples where the continuous case is not the right choice. If this happens then it seems to be the case that one may also choose some sort of inhomogeneous transparent case, where one keeps the definitions of the usual transparent case, but at scales smaller than $\eps$ test functions are scaled differently, i.e., they are scaled in the following way:
\begin{equation*}
\phi_{(t,x)}^\delta (s,y)=\delta^{-d-4}\eps ^2 \phi(\delta^{-4}\eps^2 (s-t),\delta^{-1}(y-x)).
\end{equation*}
This scaling is such that, at scale $\eps$, it coincides with the usual parabolic scaling, but at scales below $\eps$ it respects the fact that as a consequence of~\eqref{e:sharpBound} and~\eqref{e:sharpHomogeneousBound} one has for all multi-indices $\ell$ and $m$ the estimate
\begin{equation*}
|D_x^\ell D_t^m K^\eps(t,x)|\lesssim \eps^{2|m|}\|z\|_\eps^{-(|\s|-2)-|\ell|-4|m|},
\end{equation*}
for $\|z\|_\s\leq \eps$, where $\|z\|_\eps=((\eps^2 t)^{1/4}+|x|)$.
	\end{remark}
\begin{remark}
\label{rem:A}
As a consequence of Theorem~\ref{thm:Schauder} one only has an identity of the form 
$\CR^{\eps}\CK_{\gamma}^{\eps}=K^{\eps}\CR^{\eps} + \mathrm{small\, error}$. Since the kind of approximation 
schemes we are interested in usually have a divergent part, one 
might get worried that one is not able to control the error term. However, it turns out that in many examples of interest 
one is able to enforce the identity $\CR^{\eps}\CK_{\gamma}^{\eps}=K^{\eps}\CR^{\eps}$ by modifying $\CK_{\gamma}^{\eps}$. 
Indeed, consider the common situation in which elements of $\CX_\eps$ can be identified with functions
on $\R^d$ (or some subset thereof) and where the reconstruction operator is given 
by $(\CR^\eps f)(z)=(\Pi_z^{\eps}f(z))(z)$. One then has
\begin{equation}
\label{eq:exampleK}
(\CR^\eps\CK_{\gamma}^{\eps} f)(z)= (K^\eps\Pi_z^\eps f(z))(z)+ \Pi_z^\eps(T_{\gamma}^\eps(\CR^\eps f-\Pi_z^\eps f(z))(z))(z).
\end{equation}
The example to have in mind to think about~\eqref{eq:exampleK} is that the first summand on the right hand side is 
given by
\begin{equation}
\label{eq:firstterm}
\int  K^\eps(z,y)(\Pi_z^\eps f(z))(y)\, dy,
\end{equation}
(for a suitable, possibly discrete, reference measure $dz$),
whereas the second term on the right hand side is given by
\begin{equation}
\label{eq:Texample}
\sum_{|k|_\s<\gamma+\beta} \frac{(\Pi_z^\eps X^k)(z)}{k!}\int D_1^k K^\eps(z,y)(\CR^\eps f-\Pi_z^\eps f(z))(y)\, dy.
\end{equation}
Assume that $\Pi_z^\eps \one=1$ (here, we set $\one = X^0$). The definition of a model then yields $(\Pi_z^\eps X^k)(z) \approx \eps^{|k|_\s}$ (it does however \emph{not} yield $(\Pi_z^\eps X^k)(z)=0$). In this setting one then has
\begin{equation}
\label{eq:CAeps}
\begin{aligned}
(\CR^\eps\CK_{\gamma}^{\eps} f)&(z)- (K^\eps\CR^\eps f)(z)\\
&=\sum_{n}\sum_{0<|k|_\s<\gamma+\beta}\frac{(\Pi_z^\eps X^k)(z)}{k!}\int D_1^k K_n^\eps(z,y)(\CR^\eps f-\Pi_z^\eps f(z))(y)\, dy.
\end{aligned}
\end{equation} 
Define now an operator $\CA^\eps$ via
\begin{equation}
\label{eq:CAdef}
(\CA^\eps f)(z)=\bigg[\sum_{n}\sum_{0<|k|_\s<\gamma+\beta}\frac{(\Pi_z^\eps X^k)(z)}{k!}\int D_1^k K_n^\eps(z,y)(\CR^\eps f-\Pi_z^\eps f(z))(y)\, dy\bigg]\one.
\end{equation}
Then, the operator $\bar{\CK}_\gamma^\eps=\CK_{\gamma}^{\eps} -\CA^\eps$ satisfies $\CR^\eps\bar{\CK}_\gamma^\eps= K^\eps\CR^\eps$ as desired.
\begin{remark}\label{rem:commute}
The problem with the above construction is that there seems to be no reason in general for $\CA^\eps$ to map $\CD_\eps^\gamma$ into $\CD_\eps^{\gamma+\beta}$ so that it may be necessary to introduce higher order corrections to $\CA^\eps$.
However, consider the discrete, semidiscrete or continuous case introduced in Section~\ref{S2}. It is usually possible to impose $(\Pi_z^\eps X^k)(z)=0$, for all $z$ in the support of the reference measure (this was for instance imposed in~\cite{MatetskiDiscrete}), in which case \eref{eq:CAeps} shows that the choice $(\CR^\eps f)(z)= (\Pi_z^\eps f(z))(z)$, automatically yields $\CA^\eps \equiv 0$.
Finally, let us mention at this point that it was shown in~\cite{Regularity} that the identity $\CA^\eps\equiv 0$ also
holds in the transparent case. This illustrates that for most cases of interest no further modification of 
$\CK_\gamma^\eps$ is needed.
\end{remark}
\end{remark}
\begin{proof}
Fix $y,z\in\K$ such that $\eps\leq \|y-z\|_\s \leq 1$. We first estimate the non-polynomial part.
Making use of the first and third property in Definition~\ref{def:I} we see that for any $\ell\notin \N$,
\begin{equs}
\label{eq:nonpolypart}
\|&\CK_{\gamma}^{\eps} f(z)-\Gamma^{\eps}_{zy}\CK_{\gamma}^{\eps} f(y)\|_\ell\\
&=\|\CI(f(z)-\Gamma^{\eps}_{zy}f(y))\|_\ell
\lesssim \|f(z)-\Gamma^{\eps}_{zy}f(y)\|_{\ell-\beta}
\leq \|y-z\|_\s^{\gamma+\beta-\ell}\$ f\$^{(\eps)}_{\gamma;\dbar\K}.
\end{equs}
In a similar way we see that 
\begin{equation}
\label{eq:nonpolypartdif}
\|\CK_{\gamma}^{\eps} f(z)-\Gamma^{\eps}_{zy}\CK_{\gamma}^{\eps} f(y)-\bar\CK_{\gamma}^{\eps}\bar f(z)+\bar\Gamma^{\eps}_{zy}\bar f(y)\|_\ell
\lesssim \|y-z\|_\s^{\gamma+\beta-\ell}\$f;\bar f\$_{\gamma;\dbar\K}^{(\eps)}.
\end{equation}
Thus, \eqref{eq:nonpolypart}--\eqref{eq:nonpolypartdif} show that the  non-polynomial components satisfy the desired estimates.
To deal with the polynomial components of $\CK_{\gamma}^{\eps}$ we make use of the following lemma that is established after this proof.
\begin{lemma}
\label{lem:commutationrelation}
Under the assumptions of Theorem~\ref{thm:Schauder}, for any $\zeta\in A$ and any $a\in V$ with homogeneity $\zeta$, one has the identity
\begin{equation}
\label{eq:commutationrelation}
\Gamma^{\eps}_{zy}\Big(\CI a+\sum_{n}(T_{n,\zeta}^{\eps}\Pi_y^\eps a)(y)\Big)
= \CI \Gamma^{\eps}_{zy} a+\sum_{n,\bar\zeta\in A}(T_{n,\bar\zeta }^{\eps}\Pi_z^\eps\CQ_{\bar\zeta}\Gamma^{\eps}_{zy}a)(z)
\end{equation}
for any choice of $y,z\in\R^d$.
\end{lemma}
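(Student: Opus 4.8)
The plan is to deduce~\eqref{eq:commutationrelation} from two observations: that its two sides differ only by an element of the polynomial sector $\bar\CT$, and that the model map $\Pi^\eps_z$ sends both sides to the same element of $\CX_\eps$. Since $\Pi^\eps_z$ is injective on $\bar\CT$ (the content of the remark following Assumption~\ref{a:model}), these two facts together force the two sides to agree. Throughout, write $L$ and $R$ for the left- and right-hand sides of~\eqref{eq:commutationrelation}, and recall the abbreviation $T^\eps_\zeta=\sum_{n=0}^N T^\eps_{n,\zeta}$.

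The first step is to check that $L-R\in\bar\CT$. By the third property in Definition~\ref{def:I} one has $\Gamma^\eps_{zy}\CI a-\CI\Gamma^\eps_{zy}a\in\bar\CT$, while every term of the form $(T^\eps_{n,\zeta}\Pi^\eps_y a)(y)$ or $(T^\eps_{n,\bar\zeta}\Pi^\eps_z\CQ_{\bar\zeta}\Gamma^\eps_{zy}a)(z)$ belongs to $\bar\CT$ by~\eqref{eq:projectionpoly}; since $\bar\CT$ is invariant under the structure group $G$, applying $\Gamma^\eps_{zy}$ to the first such term keeps it in $\bar\CT$. Hence $L-R\in\bar\CT$.

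The second step is to compute $\Pi^\eps_z L$ and $\Pi^\eps_z R$ and to verify that they agree. From $\Gamma^\eps_{zz}=\id$, $\Gamma^\eps_{xy}\Gamma^\eps_{yz}=\Gamma^\eps_{xz}$ and $\Pi^\eps_z=\Pi^\eps_y\Gamma^\eps_{yz}$ (Definition~\ref{def:model}) one obtains $\Pi^\eps_z\Gamma^\eps_{zy}=\Pi^\eps_y$, so that $\Pi^\eps_z L=\Pi^\eps_y\CI a+\Pi^\eps_y(T^\eps_\zeta\Pi^\eps_y a)(y)$. Since $a$ is homogeneous of degree $\zeta$, the realisation identity~\eqref{eq:PirealisesK} (used at base point $y$) gives $\Pi^\eps_y\CI a=K^\eps\Pi^\eps_y a-\Pi^\eps_y(T^\eps_\zeta\Pi^\eps_y a)(y)$, and the two $T^\eps$-terms cancel, leaving $\Pi^\eps_z L=K^\eps\Pi^\eps_y a$. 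For $R$, decompose $\Gamma^\eps_{zy}a=\sum_{\bar\zeta\in A}\CQ_{\bar\zeta}\Gamma^\eps_{zy}a$ into homogeneous components, each of which again lies in $V$ since $\Gamma^\eps_{zy}$ preserves the sector, and apply~\eqref{eq:PirealisesK} to each summand $\CQ_{\bar\zeta}\Gamma^\eps_{zy}a\in V_{\bar\zeta}$ (note $\Pi^\eps_z\CQ_{\bar\zeta}\Gamma^\eps_{zy}a\in\CX_{\eps,\bar\zeta}$ by the model bound~\eqref{eq:Pi}, so $T^\eps_{n,\bar\zeta}$ applies). This expresses $\Pi^\eps_z\CI\Gamma^\eps_{zy}a$ as $\sum_{\bar\zeta}K^\eps\Pi^\eps_z\CQ_{\bar\zeta}\Gamma^\eps_{zy}a$ minus precisely the double sum $\sum_{n,\bar\zeta}\Pi^\eps_z(T^\eps_{n,\bar\zeta}\Pi^\eps_z\CQ_{\bar\zeta}\Gamma^\eps_{zy}a)(z)$ occurring in $\Pi^\eps_z R$; these cancel, and by linearity of $K^\eps$ and $\Pi^\eps_z$ one is left with $\Pi^\eps_z R=K^\eps\Pi^\eps_z\Gamma^\eps_{zy}a=K^\eps\Pi^\eps_y a$. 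Thus $\Pi^\eps_z L=\Pi^\eps_z R$, and together with $L-R\in\bar\CT$ and the injectivity of $\Pi^\eps_z$ on $\bar\CT$ this yields $L=R$.

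I do not expect a serious obstacle: this is essentially the discrete transcription of the commutation lemma underlying the Schauder estimate in~\cite{Regularity}, and the only delicate points are bookkeeping ones — keeping track of which $T^\eps_{\bar\zeta}$ enters the realisation identity for each homogeneous piece $\CQ_{\bar\zeta}\Gamma^\eps_{zy}a$, and checking that $\Pi^\eps_z\CQ_{\bar\zeta}\Gamma^\eps_{zy}a$ lies in the domain $\CX_{\eps,\bar\zeta}$ of $T^\eps_{n,\bar\zeta}$, which is immediate from~\eqref{eq:Pi}. The structural ingredients doing the real work are the realisation identity~\eqref{eq:PirealisesK}, the $\bar\CT$-valuedness of the $T^\eps_{n,\zeta}$, property~3 of abstract integration maps, and the injectivity of $\Pi^\eps_z$ on the polynomial sector.
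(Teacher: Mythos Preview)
Your proof is correct and follows exactly the same approach as the paper: reduce to showing $\Pi^\eps_z L=\Pi^\eps_z R$ using that $L-R\in\bar\CT$ and injectivity of $\Pi^\eps_z$ on $\bar\CT$, then verify the latter identity via the realisation relation~\eqref{eq:PirealisesK}. The paper's proof is simply terser, stating only that the identity ``follows from the fact that $\Pi^\eps$ realises $K^\eps$ for $\CI$'' without spelling out the cancellations you carefully detail.
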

As a consequence of Lemma~\ref{lem:commutationrelation} we see that, 
\begin{equation}
\label{eq:applycommutation}
\begin{aligned}
\Gamma^{\eps}_{zy}\CI f(y)&+\Gamma^{\eps}_{zy}\sum_{n,\zeta}(T_{n,\zeta}^{\eps}\Pi_y^\eps\CQ_{\zeta} f(y))(y)\\
&= \CI\Gamma^{\eps}_{zy}f(y) +\sum_{n,\bar\zeta}(T_{n,\bar\zeta}^{\eps}\Pi_z^\eps\CQ_{\bar\zeta}\Gamma^{\eps}_{zy} f(y))(z).
\end{aligned}
\end{equation}
Note that $\CI$ does not produce any polynomial component. Thus, with~\eqref{eq:applycommutation} at hand, we see that for any multiindex $k$, $(\Gamma^{\eps}_{zy}\CK_{\gamma}^{\eps} f(y))_k-(\CK_{\gamma}^{\eps} f(z))_k= \sum_n[\I_n+\II_n - \III_n]$, where
\begin{equs}
\I_n&= \textstyle{\sum_{\bar\zeta}}\CQ_k(T_{n,\bar\zeta}^{\eps}(\Pi_z^\eps\CQ_{\bar\zeta}[\Gamma^{\eps}_{zy} f(y)-f(z)])(z)),\\
\II_n&= \CQ_k(\Gamma^{\eps}_{zy}(T_{n,\gamma}^\eps(\CR^{\eps}f-\Pi_y^\eps f(y))(y)),\label{eq:polypart}\\
\III_n&=\CQ_k(T_{n,\gamma}^\eps(\CR^{\eps}f-\Pi_z^{\eps}f(z))(z)).
\end{equs}
We first assume that $2^{-n}\leq \|y-z\|_\s$. In this case we bound $\I_n$, $\II_n$ and $\III_n$ seperately.
We deduce from Assumption~\ref{a:TandK} that for all $\bar\zeta$ and all $n$,
\begin{equation}
\label{eq:firstpolysmallscale}
\big\|T_{n,\bar\zeta}^{\eps}(\Pi_z^\eps\CQ_{\bar\zeta}[\Gamma^{\eps}_{zy}f(y)-f(z)])(z)\big\|_{|k|_\s}
\lesssim 2^{n(|k|_\s-\beta-\bar\zeta)}\|y-z\|_\s^{\gamma-\bar\zeta}\|\Pi^{\eps}\|_{\gamma;\dbar\K}^{(\eps)}\$f\$_{\gamma;\dbar\K}^{(\eps)}.
\end{equation}
We deduce from the representation in~\eqref{eq:polypart} that only those values $\bar\zeta$ contribute to the first sum in~\eqref{eq:polypart} for which $\bar\zeta > |k|_\s-\beta$. Thus, summing the right hand side of~\eqref{eq:firstpolysmallscale} over $n$ such that $2^{-n}\leq \|y-z\|_\s$ we obtain an upper bound that is a multiple of $\|y-z\|_\s^{\gamma+\beta-|k|_\s}$.
In a similar way we can deal with $\III_n$, but making also use of the Reconstruction Theorem~\ref{thm:reconstruction}.
To estimate $\II_n$ we first note that as a consequence of Assumption~\ref{a:model},
\begin{equation}
\label{eq:smallscaleIIrepresentation}
\begin{aligned}
\big\|\Gamma_{zy}^\eps T_{n,\gamma}^\eps&(\CR^{\eps}f-\Pi_y^{\eps}f(y))(y)\big\|_{|k|_\s}\\
&\lesssim \sum_{\ell: |k+\ell|_\s<\gamma+\beta}\binom{k+\ell}{\ell} \|z-y\|_\s^\ell
\big|\CQ_{\ell+k}(T_{n,\gamma}^\eps(\CR^{\eps} f-\Pi_y^\eps f(y))(y))\big|\\
&\lesssim \sum_{\ell:|k+\ell|_\s<\gamma+\beta} 2^{n(|\ell+k|_\s-\beta-\gamma)}\|z-y\|_\s^{|\ell|_\s}\$f\$_{\gamma,\dbar\K}^{(\eps)}\|\Pi^{\eps}\|_{\gamma;\dbar\K}^{(\eps)}.
\end{aligned}
\end{equation}
Summing each summand in~\eqref{eq:smallscaleIIrepresentation} first over $n$ such that $2^{-n}\leq \|y-z\|_\s$, we obtain a bound that is a multiple of $\|y-z\|^{\gamma+\beta-|k|_\s}\$f\$_{\gamma,\dbar\K}^{(\eps)}\|\Pi^{\eps}\|_{\gamma;\dbar\K}^{(\eps)}$ as desired.
The corresponding bounds on the difference $\CK_{\gamma}^{\eps}-\bar\CK_{\gamma}^{\eps}$ are obtained in a similar fashion.
We now seek for bounds on large scales, i.e., for $\|y-z\|_\s< 2^{-n}$. Recall the consistency relation in Assumption~\ref{a:TandK}. Thus, for any $\bar\zeta \in (|k|_\s-\beta , \gamma)$
\begin{equation}
\label{eq:consistency}
\CQ_k(T_{n,\bar\zeta}^\eps(\Pi_z^{\eps}\CQ_{\bar\zeta}
[\Gamma_{zy}^{\eps}f(y)-f(z)])(z))
= \CQ_k(T_{n,\gamma}^\eps (\Pi_z^{\eps}\CQ_{\bar\zeta}
[\Gamma_{zy}^{\eps}f(y)-f(z)])(z)).
\end{equation}
Consequently, adding and subtracting $\CQ_k(T_{n,\gamma}^\eps \Pi_z^\eps[\Gamma_{zy}^\eps f(y)-f(z)](z))$ to $\I_n+\II_n-\III_n$, we see that we can write $\I_n+\II_n-\III_n$ as $-\I'_n+\II'_n-\III'_n$, where
\begin{equation}
\label{eq:123rewrite}
\begin{aligned}
\I'_n&=\sum_{\bar\zeta\leq |k|_\s-\beta}\CQ_k(T_{n,\gamma}^\eps(\Pi_z^{\eps}\CQ_{\bar\zeta}
[\Gamma_{zy}^{\eps}f(y)-f(z)])(z)),\\
\II'_n&= \CQ_k(T_{n,\gamma}^\eps(\Pi_y^{\eps}f(y)-\CR^{\eps}f)(z)),\\
\III'_n&= \CQ_k(\Gamma_{zy}^{\eps}(T_{n,\gamma}^{\eps}(\Pi_y^{\eps}f(y)-\CR^{\eps}f))(y))
\end{aligned}
\end{equation}
To bound $\I'_n$ first note that for any $\bar\zeta\leq |k|_\s-\beta$,
\begin{equation}
\label{eq:boundonIprime}
\I_n'\lesssim 2^{n(|k|_\s-\beta-\bar\zeta)}\|\Pi^{\eps}\|_{\gamma;\dbar\K}^{(\eps)}\|y-z\|_\s^{\gamma-\bar\zeta}
\$f\$_{\gamma;\dbar\K}^{(\eps)}.
\end{equation}
Summing this over $n$, such that $2^n < \|y-z\|_\s^{-1}$, leads to a bound that is a multiple of $\|y-z\|_\s^{\gamma+\beta-|k|_\s}$. It remains to bound the difference $\II'_n-\III'_n$. We note that as a consequence of the third item in Assumption~\ref{a:TandK},
\begin{equation}
\label{eq:23prime}
|\II'_n-\III'_n|\lesssim
2^{n(\lceil \gamma+\beta\rceil-\beta)}\|y-z\|_s^{\lceil \gamma+\beta\rceil-|k|_\s}
\sup_{\substack{\varphi \in \Phi}}
\big|\iota_\eps(\Pi_y^{\eps}f(y)-\CR^{\eps}f)(\varphi_{z}^{n-1})\big|.
\end{equation}
Writing 
\begin{equation}
\label{eq:piminusRrewrite}
\Pi_y^{\eps}f(y)-\CR^{\eps}f= (\Pi_{z}^{\eps}f(z)-\CR^{\eps}f)
+\Pi_{z}^{\eps}(\Gamma_{zy}^{\eps}f(y)-f(z)),
\end{equation}
we may estimate using the Reconstruction Theorem~\ref{thm:reconstruction},
\begin{equation}
\label{eq:firsttermin23dif}
\sup_{\varphi \in \Phi}
\big|\iota_\eps(\Pi_{z}^{\eps}f(z)-\CR^{\eps}f)(\varphi_{z}^{n})\big|
\lesssim 2^{-n\gamma}\|\Pi^{\eps}\|_{\gamma;\dbar\K}^{(\eps)}\$f\$_{\gamma;\dbar\K}^{(\eps)}.
\end{equation}
Regarding the second term on the right hand side of~\eqref{eq:piminusRrewrite}, we can estimate
\begin{equation}
\label{eq:secondterm23dif}
\begin{aligned}
\sup_{\varphi \in \Phi}
&\big|\iota_\eps(\Pi_{z}^{\eps}(\Gamma_{zy}^{\eps}f(y)-f(z)))(\varphi_{z}^{n-1})\big|
&\lesssim\sum_{\zeta<\gamma}\|\Pi^{\eps}\|_{\gamma;\dbar\K}^{(\eps)}\|y-z\|^{\gamma-\zeta}\$f\$_{\gamma;\dbar\K}^{(\eps)}2^{-n\zeta}.
\end{aligned}
\end{equation}
Plugging the right hand sides of \eqref{eq:firsttermin23dif} and \eqref{eq:secondterm23dif} into~\eqref{eq:23prime}, and summing over $n$ such that $2^{n}\leq \|y-z\|_\s^{-1}$ yields the bound of the required order.
The corresponding bounds on the difference $\CK_{\gamma}^{\eps}-\bar\CK_{\gamma}^{\eps}$ can be obtained in a similar way. We omit  the details.

We now turn to the proof of~\eqref{eq:convolutionidentity}.
As a consequence of~\eref{eq:rec} and Assumption~\ref{a:Schauersmallscale} we have that for every compact set $\K_\eps$ of diameter bounded by $2\eps$, and every $z\in\R^d$,
\begin{equs}
\label{eq:reconstructionconsequence}
\|\CR^\eps\CK_{\gamma}^{\eps} f-\Pi_z^{\eps}\CK_{\gamma}^{\eps} f(z)\|_{\gamma+\beta;\K_\eps;z;\eps}
&\lesssim\|\Pi^{\eps}\|_{\gamma+\beta;\bar\K_\eps}^{(\eps)}\$\CK_{\gamma}^{\eps} f\$_{\gamma+\beta;\K_\eps;\eps} \\
&\lesssim (\|\Pi^{\eps}\|_{\gamma+\beta;\dbar\K_\eps}^{(\eps)})^2\$ f\$_{\gamma;\dbar\K_\eps;\eps} .
\end{equs}
Setting $A^{\eps}f=\CR^{\eps}\CK_{\gamma}^{\eps} f-K^{\eps}\CR^{\eps}f$, it remains to show that
\begin{equation}
\label{eq:toestablish}
\|\Pi_z^{\eps}\CK_{\gamma}^{\eps} f(z)-K^{\eps}\CR^{\eps}f\|_{\gamma+\beta;\K_\eps;z;\eps}\lesssim\|\Pi^{\eps}\|_{\gamma+\beta;\bar\K_\eps}^{(\eps)}\$f\$_{\gamma;\dbar\K_\eps}^{(\eps)}.
\end{equation}
Since the model realises $K^{\eps}$ for $\CI$, one has
\begin{equation}
\label{eq:PiKf}
\Pi_z^{\eps}\CK_{\gamma}^{\eps} f(z)
=\sum_{n}\Big(K_n^{\eps}\Pi_z^{\eps}f(z)+\Pi_z^{\eps}(T_{n,\gamma}^\eps[\CR^{\eps}f-\Pi_z^{\eps}f(z)])(z)\Big).
\end{equation}
Consequently, the left hand side in~\eqref{eq:toestablish} can be written as 
\begin{equation}
\label{eq:toestablish2}
\sum_{n}\Big(K_n^{\eps}(\Pi_z^{\eps}f(z)-\CR^{\eps}f)- \Pi_z^\eps (T_{n,\gamma}^{\eps}[\Pi_z^{\eps}f(z)-\CR^{\eps}f](z))\Big)
\end{equation}
and we may deduce~\eqref{eq:toestablish} using the fourth item in Assumption~\ref{a:TandK}.
\end{proof}
We now provide the proof of Lemma~\ref{lem:commutationrelation}.
\begin{proof}
First note that $\Gamma_{zy}^{\eps}\CI a-\CI\Gamma_{zy}^{\eps}a\in\bar\CT$. Since we assumed that $\Pi_z^{\eps}$
is injective on $\bar\CT$, it is enough to show that
\begin{equation}
\label{eq:usingPiinjective}
\Pi_z^\eps\Big(\Gamma_{zy}^{\eps}\CI a+\sum_{n}\Gamma_{zy}^{\eps}(T_{n,\zeta}^{\eps}\Pi_y^{\eps} a)(y)\Big)
=\Pi_z^\eps\Big(\CI\Gamma_{zy}^{\eps} a+\sum_{n,\bar\zeta}(T_{n,\bar\zeta}^{\eps}\Pi_z^{\eps} \CQ_{\bar\zeta}\Gamma_{zy}^{\eps}a)(z)\Big).
\end{equation}
This identity however follows from the fact that $\Pi^{\eps}$ realises $K^{\eps}$ for $\CI$.
\end{proof}

\begin{remark}\label{rem:lsm}
At first sight it is not clear that a map $\CI$ with the properties as stated in this section exists. However, the result below shows that one is always able to extend the regularity structure such that it accommodates a map $\CI$ as in Definition~\ref{def:I}. It also shows that one is able to extend the model to a pair $(\hat\Pi^\eps,\hat\Gamma^\eps)$ 
such that $\hat\Pi^\eps$ realises $K^\eps$ for $\CI$. The pair $(\hat\Pi^\eps,\hat\Gamma^\eps)$ turns out to satisfy all defining properties of a model except the second inequality in~\eqref{eq:Gamma} (which however still holds on the original regularity structure). We call such a pair a \textit{large scale model} for the extended regularity structure.
\end{remark}
\begin{remark}
The proof of Theorem~\ref{thm:extension} below shows that in the setup of the discrete, semidiscrete or continuous case, assuming that on the right hand side of~\eqref{eq:Tdifferentest} the factor $\|y-z\|_\s^{\lceil \zeta+\beta\rceil-|k|_\s}$ can be replaced by $\eps^{\lceil \zeta+\beta\rceil-|k|_\s}$ whenever $\|y-z\|_\s\leq \eps$, the large scale model $(\hat\Pi^\eps,\hat\Gamma^\eps)$ turns out to be a (proper) model. In the transparent setting it is sufficient that~\eqref{eq:Tdifferentest} also holds for all $\|y-z\|_\s\leq \eps$.
\end{remark}
\begin{theorem}
\label{thm:extension}
Let $\TT=(A,\CT,G)$ be a regularity structure and $(\Pi^\eps,\Gamma^\eps)$ be a model for $\TT$ satisfying Assumption~\ref{a:model}. Let $\beta>0$ and assume that there are operators $K_n^\eps$ and $T_{n,\zeta}^\eps$ satisfying Assumption~\ref{a:TandK}. Let $V$ be a sector with the property that for every $\alpha\notin\N$ with $V_\alpha\neq 0$, one has $\alpha+\beta\notin\N$. Let furthermore $W$ be a subsector of $V$ and let $\CI:W\to\CT$ be an abstract integration map of order $\beta$ such that the model $(\Pi^\eps,\Gamma^\eps)$ realises $K^\eps$ for $\CI$. Then, there exists a regularity structure $\hat\TT$ containing $\TT$ and a large scale  model $(\hat\Pi^\eps,\hat\Gamma^\eps)$ 
(see Remark~\ref{rem:lsm}) for $\hat\TT$ extending $(\Pi^\eps,\Gamma^\eps)$ and an abstract integration map $\hat\CI$ of order $\beta$ defined on $V$ such that $\hat\Pi^\eps$ realises $K^\eps$ for $\hat\CI$ and such that $\hat\CI\tau=\CI\tau$ for all $\tau\in W$. Moreover, this extension is continuous.
\end{theorem}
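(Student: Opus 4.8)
The plan is to transcribe the proof of the continuum extension theorem \cite[Thm.~5.14]{Regularity}, with the convolution against $K$ and the Taylor expansion of $K$ replaced by the black-box operators $K^{\eps}=\sum_{n\le N}K_n^{\eps}$ and $T_{n,\zeta}^{\eps}$, and with every scale decomposition truncated at scale $\eps$ (i.e. at $n=N$), which is precisely what forces us to settle for a large scale model. First I would choose a graded linear complement $U$ of $W$ inside $V$ spanned by homogeneous basis vectors, adjoin to $\CT$ one new formal symbol, written $\CI\tau$, of homogeneity $|\tau|+\beta$ for each basis vector $\tau\in U\setminus\bar\CT$ (by the hypothesis on $V$ this homogeneity is non-integer whenever $|\tau|$ is, so the new symbols do not clash with the polynomial sector), and grade the resulting space $\hat\CT$ by the locally finite set $\hat A=A\cup\{|\tau|+\beta:\tau\in U\setminus\bar\CT\}$. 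I would then let $\hat\CI\colon V\to\hat\CT$ be the linear map equal to $\CI$ on $W$, sending each such $\tau$ to the corresponding new symbol and killing $U\cap\bar\CT$, and extend the structure group as in \cite[Sec.~5]{Regularity}, a new group element acting on $\CI\tau$ by $\CI\Gamma\tau$ plus a polynomial correction whose form is dictated by the model extension below.

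For the model, set $\mathcal J^{\eps}(x)\tau=\sum_{\zeta\in A}\sum_{n\le N}(T_{n,\zeta}^{\eps}\Pi_x^{\eps}\CQ_{\zeta}\tau)(x)\in\bar\CT$, so that $\mathcal J^{\eps}(x)\tau=(T_{|\tau|}^{\eps}\Pi_x^{\eps}\tau)(x)$ for homogeneous $\tau$, and define, linearly in $\tau\in V$,
\begin{equation}
\hat\Pi_x^{\eps}\CI\tau=K^{\eps}\Pi_x^{\eps}\tau-\Pi_x^{\eps}\mathcal J^{\eps}(x)\tau\;,\qquad
\hat\Gamma_{xy}^{\eps}\CI\tau=\CI\Gamma_{xy}^{\eps}\tau+\mathcal J^{\eps}(x)\Gamma_{xy}^{\eps}\tau-\Gamma_{xy}^{\eps}\mathcal J^{\eps}(y)\tau\;,
\end{equation}
keeping $\hat\Pi^{\eps}=\Pi^{\eps}$, $\hat\Gamma^{\eps}=\Gamma^{\eps}$ on $\CT$. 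The first thing to check is that these formulas are \emph{consistent} with the given data on $W$: since $\Pi_x^{\eps}$ is injective on $\bar\CT$ (see the remark after Assumption~\ref{a:model}) and $\Gamma_{xy}^{\eps}\CI\tau-\CI\Gamma_{xy}^{\eps}\tau\in\bar\CT$ for $\tau\in W$ by Definition~\ref{def:I}, the agreement of the two definitions of $\hat\Gamma^{\eps}$ (and of $\hat\Pi_x^{\eps}\CI\tau$ with $\Pi_x^{\eps}\CI\tau$) on $W$ reduces, after applying $\Pi_x^{\eps}$, to the identity \eqref{eq:PirealisesK} and to the computation already performed in Lemma~\ref{lem:commutationrelation}. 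Granting this, the remaining algebraic axioms follow by direct manipulation using $\Gamma_{xy}^{\eps}\Gamma_{yz}^{\eps}=\Gamma_{xz}^{\eps}$, $\Pi_y^{\eps}\Gamma_{yx}^{\eps}=\Pi_x^{\eps}$ and linearity of $\hat\CI$, $\mathcal J^{\eps}(x)$: one gets $\hat\Gamma_{xx}^{\eps}\CI\tau=\CI\tau$ (the two correction terms cancel), the cocycle identity $\hat\Gamma_{xy}^{\eps}\hat\Gamma_{yz}^{\eps}=\hat\Gamma_{xz}^{\eps}$, and $\hat\Pi_x^{\eps}=\hat\Pi_y^{\eps}\hat\Gamma_{yx}^{\eps}$ (expand $\hat\Pi_y^{\eps}\hat\Gamma_{yx}^{\eps}\CI\tau$, use $\hat\Pi_y^{\eps}\CI\rho=K^{\eps}\Pi_y^{\eps}\rho-\Pi_y^{\eps}\mathcal J^{\eps}(y)\rho$ with $\rho=\Gamma_{yx}^{\eps}\tau$, cancel the $\mathcal J^{\eps}(y)$ terms); moreover $\hat\Pi^{\eps}$ realises $K^{\eps}$ for $\hat\CI$ by construction, $\hat\Gamma^{\eps}\hat\CI-\hat\CI\hat\Gamma^{\eps}\in\bar\CT$, and $\hat\CI$ maps $V_\zeta$ into $\hat\CT_{\zeta+\beta}$ and annihilates $V\cap\bar\CT$, so it is an abstract integration map of order $\beta$ on $V$ extending $\CI$.

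The analytic part is the real work: I would show that $(\hat\Pi^{\eps},\hat\Gamma^{\eps})$ is a large scale model, i.e. that for homogeneous $\tau$ with $\rho:=\CI\tau$ of homogeneity $|\rho|=|\tau|+\beta$, locally uniformly and uniformly in $\eps$, one has $|\iota_\eps(\hat\Pi_x^{\eps}\rho)(\CS_{\s,x}^{\lambda}\phi)|\lesssim\lambda^{|\rho|}$ for $\phi\in\Phi$ and $\lambda\in(\eps,1]$, $\|\hat\Pi_x^{\eps}\rho\|_{|\rho|;\K_\eps;x;\eps}\lesssim1$ for $\diam\K_\eps\le2\eps$, and $\|\hat\Gamma_{xy}^{\eps}\rho\|_m\lesssim\|x-y\|_\s^{|\rho|-m}$ for $m<|\rho|$ and $\|x-y\|_\s\in(\eps,1]$ — but \emph{not} the small-scale $\$\cdot\$_{|\rho|;\K;\eps}$-bound for the new symbols, whence ``large scale''. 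The $\K_\eps$-seminorm bound drops out of the fourth item of Assumption~\ref{a:TandK} (namely \eqref{eq:relationKandT}) applied to $F=\Pi_x^{\eps}\tau$ together with \eqref{eq:Pi}. For the $\Gamma$-bound I would separate the non-polynomial components of $\hat\Gamma_{xy}^{\eps}\rho$, which come only from $\CI\Gamma_{xy}^{\eps}\tau=\CI\tau+\sum_{\bar\zeta<|\tau|}\CI\CQ_{\bar\zeta}\Gamma_{xy}^{\eps}\tau$ and are bounded by the first estimate in \eqref{eq:Gamma} for $(\Pi^{\eps},\Gamma^{\eps})$, from its polynomial component $\mathcal J^{\eps}(x)\Gamma_{xy}^{\eps}\tau-\Gamma_{xy}^{\eps}\mathcal J^{\eps}(y)\tau$, which I would estimate by splitting the sum over $n\le N$ at $2^{-n}\sim\|x-y\|_\s$, using the consistency relation (item~1 of Assumption~\ref{a:TandK}) to pass to a common index, the component bound \eqref{eq:Tcomponentest} for $2^{-n}\lesssim\|x-y\|_\s$ and the difference bound \eqref{eq:Tdifferentest} for $2^{-n}\gtrsim\|x-y\|_\s$, exactly as in \cite[Thm.~5.14]{Regularity}. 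For $\hat\Pi_x^{\eps}\rho=\sum_{n\le N}\bigl(K_n^{\eps}\Pi_x^{\eps}\tau-\Pi_x^{\eps}(T_{n,|\tau|}^{\eps}\Pi_x^{\eps}\tau)(x)\bigr)$ tested against $\CS_{\s,x}^{\lambda}\phi$, again split at $2^{-n}\sim\lambda$: for $2^{-n}\lesssim\lambda$ the second bound in the fifth item of Assumption~\ref{a:TandK} gives a gain $2^{-n\beta}$ and, since $\sup_{\eta\in\Phi}|\iota_\eps\Pi_x^{\eps}\tau(\eta_x^{\lambda})|\lesssim\lambda^{|\tau|}$ by \eqref{eq:Pi}, this part sums to $\lambda^{|\tau|+\beta}$; for $2^{-n}\gtrsim\lambda$ use the first bound in that item together with the subtracted Taylor term and \eqref{eq:Tcomponentest} (and that $\Pi^{\eps}$ agrees with the polynomial model on $\bar\CT$), as in the continuum. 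The point to watch throughout is that every $n$-summation stops at $N$, with $2^{-N}\sim\eps$, which is exactly the range in which the fifth item of Assumption~\ref{a:TandK} and the bounds \eqref{eq:Pi}--\eqref{eq:Gamma} apply, and which dictates the restrictions $\lambda,\|x-y\|_\s\in(\eps,1]$.

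Finally, continuity of the extension comes for free: $(\hat\Pi^{\eps},\hat\Gamma^{\eps})$ depends on $(\Pi^{\eps},\Gamma^{\eps})$ (and the fixed $K_n^{\eps}$, $T_{n,\zeta}^{\eps}$) only through linear and polynomial operations, so rerunning the estimates of the previous paragraph with the difference quantities $\|\Pi^{\eps}-\bar\Pi^{\eps}\|$, $\|\Gamma^{\eps}-\bar\Gamma^{\eps}\|$ in place of the norms yields the corresponding Lipschitz bounds. I expect the main obstacle to be precisely the analytic estimate on $\hat\Pi_x^{\eps}\CI\tau$ and on the polynomial part of $\hat\Gamma^{\eps}$: this is a long transcription of \cite[Thm.~5.14]{Regularity} whose only genuinely new feature is the need to keep all scale decompositions truncated at $n=N$ and to verify that the black-box seminorms enter correctly via the fourth and fifth items of Assumption~\ref{a:TandK}.
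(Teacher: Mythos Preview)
Your algebraic construction and your treatment of the $\hat\Gamma^\eps$-bounds are essentially what the paper does: the splitting at $2^{-n}\sim\|x-y\|_\s$ with items~1--3 of Assumption~\ref{a:TandK} is precisely the content of Lemma~\ref{lem:nonpolyextension}, and the small-scale seminorm bound on $\hat\Pi^\eps\hat\CI\tau$ via item~4 is also exactly how the paper proceeds.

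The difference lies in the large-scale bound $|\iota_\eps(\hat\Pi_z^\eps\hat\CI\tau)(\phi_z^\lambda)|\lesssim\lambda^{|\tau|+\beta}$ when $|\tau|+\beta>0$. Your plan is to split the sum over $n$ at $2^{-n}\sim\lambda$ and, for $2^{-n}\gtrsim\lambda$, to combine the first bound in item~5 with the subtracted Taylor jet ``as in the continuum''. This does not go through in the black-box setting: the first estimate in item~5 is only an \emph{inequality}, so $\iota_\eps(K_n^\eps\Pi_z^\eps\tau)(\phi_z^\lambda)$ cannot be rewritten as an integral of $\CQ_0((T_{n,|\tau|}^\eps\Pi_z^\eps\tau)(\cdot))$ against $\phi_z^\lambda$ and then cancelled against $\iota_\eps\Pi_z^\eps(T_{n,|\tau|}^\eps\Pi_z^\eps\tau)(z)(\phi_z^\lambda)$. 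In the continuum one has direct access to the kernel $K_n(x,y)$ and performs a genuine Taylor remainder estimate; here Assumption~\ref{a:TandK} gives no bound on the difference $K_n^\eps F-\Pi_z^\eps(T_{n,\zeta}^\eps F)(z)$ tested at an \emph{intermediate} scale $\lambda\in(\eps,2^{-n})$ --- item~4 only controls it in the $\|\cdot\|_{\zeta+\beta;\K_\eps;z;\eps}$-seminorm, i.e.\ at scale $\eps$.

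The paper circumvents this by a different mechanism. It first treats the case $|\tau|+\beta<0$ (where there is no Taylor subtraction) directly via item~5, essentially as you describe. For $|\tau|+\beta>0$ it abandons the scale splitting in $n$ and instead runs a reconstruction-theorem-style argument: introduce a spatial partition of unity at dyadic scales $2^{-k}$, $k\in[n_0,N]$, telescoping $\hat\Pi_z^\eps\hat\CI\tau$ through $\hat\Pi_{z_{|k}}^\eps\hat\CI\tau$ at nearby points $z_{|k}$. The finest-scale term is controlled by the already-established $\|\cdot\|_{|\tau|+\beta;\K_\eps;z;\eps}$-bound, and the telescoping increments $\hat\Pi_{z_{|k}}^\eps\hat\CI\tau-\hat\Pi_{z_{|k+1}}^\eps\hat\CI\tau=\hat\Pi_{z_{|k}}^\eps(\hat\CI\tau-\hat\Gamma_{z_{|k}z_{|k+1}}^\eps\hat\CI\tau)$ are controlled using the $\hat\Gamma^\eps$-bounds from Lemma~\ref{lem:nonpolyextension} together with the \emph{recursive} hypothesis that $\hat\Pi^\eps$ already satisfies the required estimates at homogeneities below $|\tau|+\beta$. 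This recursive structure (extend one homogeneity level at a time, bootstrapping the $\hat\Pi^\eps$-bound from the $\hat\Gamma^\eps$-bound and the scale-$\eps$ bound) is what replaces the kernel Taylor remainder, and it is the main new analytic ingredient compared to \cite[Thm.~5.14]{Regularity}.
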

\begin{remark}
We refer the reader to~\cite[Thm~5.14]{Regularity} for a formulation of a more quantitative 
version of the continuity statement, which also holds in our case. 
\end{remark}
\begin{proof}
As in the proof of~\cite[Thm~5.14]{Regularity} we may restrict ourselves to the situation where the sector $V$ is given by
\begin{equation}
V=V_{\alpha_1}\oplus V_{\alpha_2}\oplus\ldots\oplus V_{\alpha_n},
\end{equation}
the $\alpha_i$ are an increasing sequence of elements in $A$, and $W_{\alpha_k}=V_{\alpha_k}$ for all $k<n$.
The algebraic part of the proof of~\cite[Thm~5.14]{Regularity} shows that it is possible to define $\hat\CI$ on $V$ such that it satisfies the properties stated in Definition~\ref{def:I}.
It remains to extend the model $(\Pi^\eps,\Gamma^\eps)$. To that end we define for $\tau\in V_{\alpha_n}$,
\begin{equation}
\label{eq:defPiandGamma}
\begin{aligned}
\hat\Pi_z^\eps\hat\CI(\tau) &\eqdef K^\eps\hat\Pi_z^\eps \tau-\hat\Pi_z^\eps (T_{|\tau|}^\eps \hat\Pi_z^\eps \tau)(z),\\
\hat\Gamma^\eps_{zy}\hat\CI(\tau) &\eqdef \hat\CI\hat\Gamma_{zy}^\eps\tau
+\sum_{\zeta\in A}(T_{\zeta}^\eps \hat\Pi_z^\eps \CQ_{\zeta}\hat\Gamma_{zy}^\eps \tau)(z)
-\hat\Gamma_{zy}^\eps(T_{|\tau|}^\eps \hat\Pi_y^\eps \tau)(y).
\end{aligned}
\end{equation}
Note that both quantities are well defined, since on the corresponding right hand sides $\hat\Pi^\eps$ and $\hat\Gamma^\eps$ are either applied to elements of homogeneity smaller or equal to $|\tau|$ or to polynomials. We leave it as an exercise to verify that $\hat\Pi^\eps$ and $\hat\Gamma^\eps$ defined in this way indeed satisfy the required algebraic constraints.
We now show that $\hat\Pi_z^\eps\hat\CI(\tau)$ satisfies the analytical estimates stated in Definition~\ref{def:model}.
First of all, note that by the fourth item in Assumption~\ref{a:TandK},
\begin{equation}
\label{eq:smallscaleextension}
\|\hat\Pi_z^\eps \hat\CI(\tau)\|_{|\tau|+\beta;\K_\eps;z;\eps}
\leq \sum_{n\leq N}\|K_n^\eps\hat\Pi_z^\eps \tau-\hat\Pi_z^\eps(T_{n,|\tau|}^\eps \hat\Pi_z^\eps\tau)(z)\|_{|\tau|+\beta;\K_\eps;z;\eps}
\lesssim 1,
\end{equation}
where we recall that $N$ is the smallest integer such that $2^{-N}\leq \eps$. We now turn to the required estimates on scales larger than $\eps$. 
We first treat the case $|\tau|+\beta <0$. Let $\delta\in(\eps,1]$, fix $\eta\in\Phi$, and $z\in\R^d$.
We first note that $\hat\Pi_z^\eps\hat\CI(\tau)=K^\eps\hat\Pi_z^\eps\tau$. Thus, for $n$ such that $2^{-n}>\delta$ we can estimate using the fifth and second item in Assumption~\ref{a:TandK},
\begin{equs}
\label{eq:smallnextension}
|\iota_\eps(K_n^\eps\hat\Pi_z^\eps\tau)(\eta_z^\delta)|
&\lesssim \Big|\int \CQ_0((T_{n,0}^\eps\hat\Pi_z^\eps\tau)(x))\eta_z^\delta(x)\, dx\Big|\\
&\lesssim 2^{-n\beta}\int\sup_{\phi\in\Phi}|(\iota_\eps\hat\Pi_z^\eps\tau)(\phi_x^{\delta})||\eta_z^\delta(x)|\, dx.
\end{equs}
One may now show that uniformly over all $x$ in the support of $\eta_z^\delta$
\begin{equs}
\label{eq:piestimateextension}
|(\iota_\eps\hat\Pi_z^\eps\tau)(\phi_x^n)|\lesssim \sum_{\alpha\leq |\tau|}2^{-n\alpha}\delta^{|\tau|-\alpha}.
\end{equs} Thus, plugging this estimate in into the right hand side of~\eqref{eq:smallnextension} and summing over $2^{-n}>\delta$ yields the correct bound. The bound in the case $2^{-n}\leq\delta$ may be obtained in a similar manner, but making use of the second estimate of the fifth item in Assumption~\ref{a:TandK}.
We now deal with the case $|\tau|+\beta >0$, and we note that the case $|\tau|+\beta=0$ is excluded by assumption. Define as in the proof of Theorem~\ref{thm:reconstruction} functions $\tilde{\Psi}_{z,[z_k]}^{\delta,k}$ and $\tilde{\Psi}_{z,[z_k,z_{k+1}]}^{\delta,k}$ with $z_k\in\Lambda_k^{\s}$ and $z_{k+1}\in\Lambda_{k+1}^{\s}$. Moreover, let $n_0$ be the smallest integer such that $2^{-n_0}\leq \delta$ and let $z_{|k}$ be as in the aforementioned proof. We can write
$\iota_\eps(\hat\Pi_z^\eps\hat\CI(\tau))(\eta_z^{\delta})=\I+\II+\III$, where 
\begin{equs}
\I&=\sum_{z_N\in\Lambda_N^{\s}}\iota_\eps(\hat\Pi_{z_{|N}}^\eps\hat\CI(\tau))(\tilde{\Psi}_{z,[z_N]}^{\delta,k}),\\
\II&= \sum_{k=n_0}^{N-1}\sum_{z_k\in\Lambda_k^{\s}, z_{k+1}\in\Lambda_{k+1}^{\s}}
\iota_\eps(\hat\Pi_{z_{|k}}^\eps \hat\CI(\tau)-\hat\Pi_{z_{|k+1}}^\eps \hat\CI(\tau))(\tilde{\Psi}_{z,[z_k,z_{k+1}]}^{\delta,k}),\\
\III&= \sum_{z_{n_0}\in\Lambda_{n_0}^{\s}}\iota_\eps(\hat\Pi_z^\eps\hat\CI(\tau) -\hat\Pi_{z_{|n_0}}^\eps\hat\CI(\tau))(\tilde{\Psi}_{z,[z_{n_0}]}^{\delta,n_0}).
\end{equs}
As in~\eqref{eq:Iest} we can estimate
\begin{equation}
|\iota_\eps(\hat\Pi_{z_{|N}}^\eps\hat\CI(\tau))(\tilde{\Psi}_{z,[z_N]}^{\delta,k})|
\lesssim (\delta 2^N)^{-|\s|}2^{-N(|\tau|+\beta)}
\|\hat\Pi_{z_{|N}}^{\eps}\hat\CI(\tau)\|_{|\tau|+\beta;[\tilde{\Psi}_{z,[z_N]}^{\delta,N}];z_{|N};\eps}.
\end{equation}
We now obtain the desired estimate on $\I$ making use of~\eqref{eq:smallscaleextension} and arguing in the same way as in the proof of Theorem~\ref{thm:reconstruction}.
To estimate $\II$, note that
\begin{equation}
\hat\Pi_{z_{|k}}^{\eps}\hat\CI(\tau)-\hat\Pi_{z_{|k+1}}^{\eps}\hat\CI(\tau)
= \hat\Pi_{z_{|k}}^{\eps}(\hat\CI(\tau)-\hat\Gamma_{z_{|k}z_{|k+1}}^\eps\hat\CI(\tau)).
\end{equation}
By the defining equation of $\hat\Gamma^\eps$ in~\eqref{eq:defPiandGamma}, the term inside the brackets may be written as
\begin{equation}
\label{eq:polynonpoly}
\hat\CI(\tau-\hat\Gamma_{z_{|k}z_{|k+1}}^{\eps}\tau)
-\sum_{\zeta\in A}(T_{\zeta}^{\eps}\hat\Pi_{z_{|k}}^\eps \CQ_{\zeta}\hat\Gamma_{z_{|k}z_{|k+1}}^\eps\tau)(z_{|k})
+\hat\Gamma_{z_{|k}z_{|k+1}}^\eps (T_{|\tau|}^\eps \hat\Pi_{z_{|k+1}}^\eps\tau)(z_{|k+1}).
\end{equation}
We estimate the polynomial and non-polynomial parts separately. Note that $\tau-\hat\Gamma_{z_{|k}z_{|k+1}}^\eps\tau$ is an element of $\CT_{<|\tau|}$. Thus, by our recursive construction
\begin{equation}
\begin{aligned}
|\iota_\eps(\hat\Pi_{z_{|k}}^\eps &\hat\CI(\tau-\hat\Gamma_{z_{|k}z_{|k+1}}^\eps\tau))(\tilde{\Psi}_{z,[z_k,z_{k+1}]}^{\delta,k})|\\
&\lesssim (\delta 2^k)^{-|\s|}\sum_{\zeta<|\tau|+\beta}2^{-k\zeta}\|\hat\CI(\tau-\hat\Gamma_{z_{|k}z_{|k+1}}^\eps\tau)\|_{\zeta}\\
&\lesssim (\delta 2^k)^{-|\s|}\sum_{\zeta<|\tau|+\beta}2^{-k\zeta}\|\tau-\hat\Gamma_{z_{|k}z_{|k+1}}^\eps\tau\|_{\zeta-\beta}\\
&\lesssim 2^{-k(|\tau|+\beta)}(\delta 2^k)^{-|\s|}.
\end{aligned}
\end{equation}
Here we used the continuity of $\hat\CI$ in the penultimate inequality. To obtain the last inequality we used the fact that by our recursive construction $\hat\Gamma^\eps$ satisfies the required analytical estimates on large scales when applied to elements of homogeneity smaller than or equal to $|\tau|$. Summing the above over the domain of summation of $\II$ yields the desired bound.
To bound the non-polynomial part we make use of the following lemma whose proof is deferred to the end of this section.
\begin{lemma}
\label{lem:nonpolyextension}
Fix $\alpha<|\tau|+\beta$, then uniformly in $k\in[n_0,\ldots, N-1]$,
\begin{equation}
\label{eq:polycomponentsextension}
\begin{aligned}
\Big\|\sum_{\zeta\in A}&(T_{\zeta}^{\eps}\hat\Pi_{z_{|k}}^\eps \CQ_{\zeta}\hat\Gamma_{z_{|k}z_{|k+1}}^\eps\tau)(z_{|k})
-\hat\Gamma_{z_{|k}z_{|k+1}}^\eps (T_{|\tau|}^\eps \hat\Pi_{z_{|k+1}}^\eps\tau)(z_{|k+1})\Big\|_{\alpha}\\
&\lesssim \|z_{|k}-z_{|k+1}\|_\s^{|\tau|+\beta-\alpha}.
\end{aligned}
\end{equation}
\end{lemma}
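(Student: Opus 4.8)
The plan is to run a dyadic decomposition of the operators $T_{\zeta}^{\eps}=\sum_{n\le N}T_{n,\zeta}^{\eps}$ and $T_{|\tau|}^{\eps}=\sum_{n\le N}T_{n,|\tau|}^{\eps}$ and to split the resulting sum over $n$ according to whether $2^{-n}$ lies below or above $\|z_{|k}-z_{|k+1}\|_\s$. Write $z=z_{|k}$ and $y=z_{|k+1}$; by construction $\|z-y\|_\s\lesssim 2^{-k}$ with $2^{-k}\gtrsim\eps$, so all the bounds of Assumptions~\ref{a:model} and~\ref{a:TandK} are available at the relevant scales. It then suffices to bound, for every homogeneity $\alpha<|\tau|+\beta$ and every $n\le N$, the $\CT_\alpha$-component $D_{n,\alpha}$ of
\begin{equation*}
\sum_{\zeta\in A}(T_{n,\zeta}^{\eps}\hat\Pi_z^\eps\CQ_\zeta\hat\Gamma_{zy}^\eps\tau)(z)-\hat\Gamma_{zy}^\eps(T_{n,|\tau|}^{\eps}\hat\Pi_y^\eps\tau)(y),
\end{equation*}
and to show $\sum_{n\le N}\|D_{n,\alpha}\|\lesssim\|z-y\|_\s^{|\tau|+\beta-\alpha}$.

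On the \emph{small scales} $2^{-n}\le\|z-y\|_\s$ the two groups of terms are estimated separately. For the first group, item~2 of Assumption~\ref{a:TandK} together with~\eqref{eq:Pi}--\eqref{eq:Gamma} (and~\eqref{eq:relationtoseminorm} when $2^{-n}\le\eps$) gives $\|(T_{n,\zeta}^{\eps}\hat\Pi_z^\eps\CQ_\zeta\hat\Gamma_{zy}^\eps\tau)(z)\|_\alpha\lesssim 2^{n(\alpha-\beta-\zeta)}\|z-y\|_\s^{|\tau|-\zeta}$; since this component vanishes unless $\zeta>\alpha-\beta$, the exponent of $2^{n}$ is negative, so summing over $2^{-n}\le\|z-y\|_\s$ produces $\|z-y\|_\s^{|\tau|+\beta-\alpha}$. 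For the second group one applies item~2 to $\hat\Pi_y^\eps\tau$, uses the polynomial bound $\|\hat\Gamma_{zy}^\eps X^{k'}\|_\alpha\lesssim\|z-y\|_\s^{|k'|_\s-\alpha}$ (Assumption~\ref{a:model}), and sums the geometric series in $n$ in the same way.

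The core of the argument is the \emph{large scales} $2^{-n}>\|z-y\|_\s$, where term-by-term bounds fail and one must extract the cancellation between the two groups. Here I would first invoke the consistency relation (item~1 of Assumption~\ref{a:TandK}): for any admissible $G$, the component $\CQ_\alpha(T_{n,\zeta}^\eps G)(z)$ equals $\CQ_\alpha(T_{n,|\tau|}^\eps G)(z)$ when $\alpha<\zeta+\beta$ and vanishes otherwise, whence
\begin{equation*}
\sum_{\zeta\in A}\CQ_\alpha(T_{n,\zeta}^\eps\hat\Pi_z^\eps\CQ_\zeta\hat\Gamma_{zy}^\eps\tau)(z)=\CQ_\alpha(T_{n,|\tau|}^\eps\hat\Pi_z^\eps\hat\Gamma_{zy}^\eps\tau)(z)-\sum_{\zeta\le\alpha-\beta}\CQ_\alpha(T_{n,|\tau|}^\eps\hat\Pi_z^\eps\CQ_\zeta\hat\Gamma_{zy}^\eps\tau)(z).
\end{equation*}
The subtracted terms are controlled by item~2 just as on small scales; the exponent $\alpha-\beta-\zeta$ of $2^n$ is now nonnegative, but since the geometric sum runs over $2^{-n}>\|z-y\|_\s$ it is dominated by its top term and again yields $\|z-y\|_\s^{|\tau|+\beta-\alpha}$. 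The first term equals $\CQ_\alpha(T_{n,|\tau|}^\eps\hat\Pi_y^\eps\tau)(z)$ because $\hat\Pi_z^\eps\hat\Gamma_{zy}^\eps\tau=\hat\Pi_y^\eps\tau$ by the model axioms. Thus, up to acceptable errors, $D_{n,\alpha}$ reduces to the $\CT_\alpha$-component of $(T_{n,|\tau|}^\eps\hat\Pi_y^\eps\tau)(z)-\hat\Gamma_{zy}^\eps(T_{n,|\tau|}^\eps\hat\Pi_y^\eps\tau)(y)$, to which item~3 of Assumption~\ref{a:TandK} applies with $F=\hat\Pi_y^\eps\tau$ and $\zeta=|\tau|$. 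The factor $\sup_{\varphi}|(\iota_\eps\hat\Pi_y^\eps\tau)(\varphi_z^{n-1})|$ it involves is $\lesssim 2^{-n|\tau|}$ by the change of base point $\hat\Pi_y^\eps=\hat\Pi_z^\eps\hat\Gamma_{zy}^\eps$, the bound $\|\hat\Gamma_{zy}^\eps\tau\|_\zeta\lesssim\|z-y\|_\s^{|\tau|-\zeta}$, and $2^{-n}>\|z-y\|_\s$, so item~3 gives a contribution $\lesssim 2^{n(\lceil|\tau|+\beta\rceil-\beta-|\tau|)}\|z-y\|_\s^{\lceil|\tau|+\beta\rceil-\alpha}$. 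Since $|\tau|+\beta\notin\N$, the exponent of $2^n$ lies in $(0,1)$, and the geometric sum over $2^{-n}>\|z-y\|_\s$ once more produces $\|z-y\|_\s^{|\tau|+\beta-\alpha}$. (In the discrete, semidiscrete and continuous cases the degenerate range $\|z-y\|_\s\le\eps$ is treated in the same way, with $\|z-y\|_\s$ replaced by $\eps$ in item~3, cf.\ the remark following Theorem~\ref{thm:extension}.) Collecting all contributions proves the lemma.

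The step I expect to be the main obstacle is precisely this large-scale cancellation: one must pinpoint which part of $\sum_\zeta T_{n,\zeta}^\eps\hat\Pi_z^\eps\CQ_\zeta\hat\Gamma_{zy}^\eps\tau$ is annihilated by $\hat\Gamma_{zy}^\eps T_{n,|\tau|}^\eps\hat\Pi_y^\eps\tau$ (via the consistency relation and the identity $\hat\Pi_z^\eps\hat\Gamma_{zy}^\eps\tau=\hat\Pi_y^\eps\tau$), and then check that every surviving error term carries enough homogeneity---for the $T^\eps$-components---or enough regularity---for the $\hat\Pi^\eps$-factor---to be summable against the correct power of $\|z-y\|_\s$. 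The bookkeeping is delicate because the relevant exponents of $2^n$ switch sign between the two scale regimes, so summability comes from the small-$n$ cutoff in one case and from the large-$n$ cutoff in the other.
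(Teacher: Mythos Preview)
Your proposal is correct and follows essentially the same approach as the paper: a dyadic decomposition of $T_\zeta^\eps$ and $T_{|\tau|}^\eps$, a split into the regimes $2^{-n}\le\|z_{|k}-z_{|k+1}\|_\s$ and $2^{-n}>\|z_{|k}-z_{|k+1}\|_\s$, termwise bounds via item~2 of Assumption~\ref{a:TandK} on small scales, and on large scales the use of the consistency relation together with $\hat\Pi_z^\eps\hat\Gamma_{zy}^\eps=\hat\Pi_y^\eps$ to reduce to the difference $(T_{n,|\tau|}^\eps\hat\Pi_y^\eps\tau)(z)-\hat\Gamma_{zy}^\eps(T_{n,|\tau|}^\eps\hat\Pi_y^\eps\tau)(y)$ handled by item~3. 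The only cosmetic difference is that the paper applies the consistency relation once at the outset (so both regimes are written with the common index $|\tau|$), whereas you invoke it only in the large-scale regime; the resulting bounds are identical.
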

With Lemma~\ref{lem:nonpolyextension} at hand the desired estimate on $\II$ readily follows. The estimate on $\III$ works along the same lines, we omit the details. This shows that $\hat\Pi^\eps$ satisfies the required analytical estimates.
We turn to the analytical estimates of $\hat\Gamma^\eps$. To that end recall~\eqref{eq:defPiandGamma}. We first deal with the non-polynomial part. In that case we have for any $\alpha<|\tau|+\beta$,
\begin{equation}
\|\hat\CI(\hat\Gamma_{zy}^\eps\tau-\tau)\|_{\alpha}
\lesssim \|\hat\Gamma_{zy}^\eps\tau-\tau\|_{\alpha-\beta}.
\end{equation}
Thus, the desired estimate follows from our recursive construction.
The desired estimate of the polynomial part is a consequence of Lemma~\ref{lem:nonpolyextension}.
\end{proof}
We finish this section with the proof of Lemma~\ref{lem:nonpolyextension}.
\begin{proof}
We write $T_{\zeta}^\eps=\sum_{n}T_{n,\zeta}^\eps$ and analogously for $T_{|\tau|}^\eps$. Then, for each $n$, the component of order $\alpha$ of the term on the left hand side of~\eqref{eq:polycomponentsextension} can be written as
\begin{equation}
\label{eq:rewriteextension}
\sum_{\zeta>\alpha-\beta}\CQ_\alpha((T_{n,|\tau|}^{\eps}\hat\Pi_{z_{|k}}^\eps \CQ_{\zeta}\hat\Gamma_{z_{|k}z_{|k+1}}^\eps\tau)(z_{|k}))-\CQ_\alpha(\hat\Gamma_{z_{|k}z_{|k+1}}^\eps (T_{n,|\tau|}^\eps \hat\Pi_{z_{|k+1}}^\eps\tau)(z_{|k+1})).
\end{equation} 
We first treat the case $2^{-n}\leq\|z_{|k}-z_{|k+1}\|_\s$. We bound the terms above seperately.
The second term in~\eqref{eq:rewriteextension} is bounded from above by
\begin{equs}
\sum_{\zeta< |\tau|+\beta}&\|\hat\Gamma_{z_{|k}z_{|k+1}}^\eps \CQ_\zeta(T_{n,|\tau|}^\eps\hat\Pi_{z_{|k+1}}^\eps\tau)(z_{|k+1}))\|_{\alpha}\\
&\lesssim \sum_{\zeta< |\tau|+\beta}\|z_{|k}-z_{|k+1}\|_\s^{\zeta-\alpha}2^{n(\zeta-\beta-|\tau|)},
\end{equs}
where we made use of the second item in Assumption~\ref{a:TandK}. Summing the above over $n$ such that $2^{-n}\leq \|z_{|k}-z_{|k+1}\|_\s$ yields the desired bound. Making again use of the second item in Assumption~\ref{a:TandK} and our recursive construction we see that the first term in~\eqref{eq:rewriteextension} is bounded from above by
\begin{equation}
\sum_{\zeta>\alpha-\beta}2^{n(\alpha-\beta-\zeta)}\|\hat\Gamma_{z_{|k}z_{|k+1}}^\eps\tau\|_{\zeta}
\lesssim \sum_{\zeta>\alpha-\beta}2^{n(\alpha-\beta-\zeta)}\|z_{|k}-z_{|k+1}\|_\s^{|\tau|-\zeta}.
\end{equation}
Summing over $n$ such that $2^{-n}\leq \|z_{|k}-z_{|k+1}\|_\s$ yields the the correct bound.
We now treat the case $\|z_{|k}-z_{|k+1}\|_\s< 2^{-n}$. To that end we rewrite~\eqref{eq:rewriteextension} as
$\I_{z_{|k}z_{|k+1}}^\alpha-\II_{z_{|k}z_{|k+1}}^\alpha$, where
\begin{equs}
\I_{z_{|k}z_{|k+1}}^\alpha&=\CQ_\alpha((T_{n,|\tau|}^\eps\hat\Pi_{z_{|k+1}}^\eps\tau)(z_{|k}))
-\CQ_\alpha(\hat\Gamma_{z_{|k}z_{|k+1}}^\eps(T_{n,|\tau|}^\eps\hat\Pi_{z_{|k+1}}^\eps\tau)(z_{|k+1})),\\
\II_{z_{|k}z_{|k+1}}^\alpha&= \sum_{\zeta\leq \alpha-\beta}\CQ_\alpha((T_{n,|\tau|}^\eps\hat\Pi_{z_{|k}}^\eps\CQ_\zeta\hat\Gamma_{z_{|k}z_{|k+1}}^\eps\tau)(z_{|k})).
\end{equs}
The third item in Assumption~\ref{a:TandK} shows that $\I_{z_{|k}z_{|k+1}}^\alpha$ is bounded from above by
\begin{equation}
\label{eq:Iextension}
2^{n(\lceil |\tau|+\beta\rceil-\beta)}\|z_{|k}-z_{|k+1}\|_\s^{\lceil |\tau|+\beta\rceil-\alpha}2^{-n|\tau|}.
\end{equation}
We turn to the estimate of $\II_{z_{|k}z_{|k+1}}^\alpha$. A similar reasoning as above yields
\begin{equation}
\label{eq:IIextension}
\II_{z_{|k}z_{|k+1}}^\alpha
\lesssim \sum_{\zeta\leq \alpha-\beta}2^{n(\alpha-\beta-\zeta)}\|z_{|k}-z_{|k+1}\|_\s^{|\tau|-\zeta}.
\end{equation}
Thus, summing~\eqref{eq:Iextension} and~\eqref{eq:IIextension} over $2^n\leq \|z_{|k}-z_{|k+1}\|_\s^{-1}$ once again yields the correct bound.
\end{proof}
\subsection{Comparison between continuous and discrete convolution operators}
\label{S4.1}
In this section we show how one can compare the convolution operators built from a continuous model $(\Pi,\Gamma)$ and from a discrete model $(\Pi^\eps,\Gamma^\eps)$. Before we dive into the details we shortly explain the notion of continuous convolution operators. This notion was introduced in \cite[Section 5]{Regularity} and we explain here how it fits into the framework of the current article.
Assume that there is a kernel $K:\R^d\times\R^d\to\R$ that can be decomposed as
\begin{equation}
K(y,z)= \sum_{n\geq 0} K_n(y,z),
\end{equation}
where the $K_n$'s are smooth functions supported in the set $\{(y,z):\,\|y-z\|_\s\leq 2^{-n}\}$.
Given a distribution $\xi\in\CD'(\R^d)$ such that for some $\zeta\in\R$,
\begin{equation}
\label{eq:goodxi}
|\xi(\varphi_z^n)|\lesssim 2^{-n\zeta},
\end{equation} 
for all scaled test functions $\varphi_z^n$, all $z\in\R^d$ and all $n\leq N$, we define for any multiindex $k$,
\begin{equation}
\label{eq:contT}
\CQ_k((T_{n,\zeta}\xi)(z))=
\begin{cases}
{1\over k!}\int D^k K_n(z,y)\xi(dy),\quad &\mbox{if }n<N,\\
{1\over k!}\sum_{n\geq N}\int D^k K_n(z,y)\xi(dy),\quad &\mbox{if }n=N.
\end{cases}
\end{equation}
We can now define $(T_{n,\zeta}\xi)(z)$ in the same way as $(T_{n,\zeta}^{\eps}F)(z)$ in Equation~\ref{eq:projectionpoly}. 

We note that the first item in Assumption~\ref{a:TandK} is satisfied for $T_{n,\zeta}$ by construction and we
assume that there is $\beta>0$ such that also the remaining four items are satisfied for it. We moreover assume that 
the value of $\beta$ coincides with the value of $\beta$ for the discrete convolution operator. We now define a 
continuous convolution operator $\CK_\gamma$ in the same way as $\CK_\gamma^\eps$ in~\eqref{eq:convolution} with 
the only difference that each $T_{\zeta}^{(\eps)}$ is replaced by $T_{\zeta}$, and likewise 
$T_{\gamma}^{(\eps)}$ is replaced by $T_{\gamma}$.
We make the following assumption.
\begin{remark}
We note that Assumption~\ref{a:TandK} was shown to hold in the transparent case. Indeed, as mentioned above, the first item is a direct consequence of the construction of the $T_{n,\zeta}$'s, the second item is a consequence of~\cite[Equ.~5.4]{Regularity}, the third item is a consequence of~\cite[Equs~5.28 \&\ 5.31]{Regularity} and the fourth item can be deduced from the third by making use of choice of the family of seminorms on $\CX_\eps=\CD'(\R^d)$ in the transparent case.
\end{remark}
\begin{assumption}
\label{a:TandKcontdisc}
There is a constant $C(\eps) >0$ such that for all $n\leq N$, all $\zeta\in A$, all $y,z\in\R^d$ such that $\eps\leq \|y-z\|_\s\leq 2^{-n}$, 
and $F^\eps\in\CX_{\eps,\zeta,z}\cap\CX_{\eps,\zeta,y}$ and $F\in\CD'(\R^d)$ satisfying~\eqref{eq:goodxi},
\begin{enumerate}
\item[1.] one has the estimate,
\begin{equation}
\begin{aligned}
\big\|&(T_{n,\zeta}^\eps F^\eps)(z) - (T_{n,\zeta}F)(z)\big\|_{|k|_\s}\\
&\lesssim 2^{n(|k|_\s-\beta)}\Big(\sup_{\varphi \in \Phi}\big|(\iota_\eps F^\eps)(\varphi_z^n)- F(\varphi_z^n)\big|
+ C(\eps)\sup_{\varphi \in \Phi}\big|F(\varphi_{z}^{n})\big|\Big),
\end{aligned}
\end{equation}
\item[2.] one has the estimate,
\begin{equs}
\big\|(T_{n,\zeta}^\eps F^\eps)&(z) - (T_{n,\zeta}F)(z) - \Gamma_{zy}^\eps(T_{n,\zeta}^\eps F^\eps)(y) + \Gamma_{zy}(T_{n,\zeta}F)(y)\big\|_{|k|_\s}\\
&\lesssim 2^{n(\lceil \zeta+\beta\rceil-\beta)}\|y-z\|_\s^{\lceil\zeta+\beta\rceil-|k|_\s}\\
&\times\Big(\sup_{\varphi \in \Phi}\big|(\iota_\eps F^\eps)(\varphi_{z}^{n-1})- F(\varphi_{z}^{n-1})\big| + 
C(\eps)\sup_{\varphi \in \Phi}\big|F(\varphi_{z}^{n-1})\big|\Big).
\end{equs}
\end{enumerate}
The proportionality constants above are assumed to be independent of all parameters involved.
\end{assumption}

\begin{remark}
Although we did not state it like this, this assumption only seems to be useful when
$\lim_{\eps \to 0} C(\eps) = 0$.
\end{remark}

With Assumption~\ref{a:TandKcontdisc} at hand, we have the following comparison theorem.
\begin{theorem}
\label{thm:Schaudercontdisc}
Let $(A,\TT, G)$ be a regularity structure and let $(\Pi^\eps,\Gamma^\eps)$ and $(\Pi,\Gamma)$ be a discrete and a continuous model for it. Let the assumptions of Theorem~\ref{thm:Schauder} be satisfied and let Assumption~\ref{a:TandKcontdisc} be satisfied. Then, for any $f^\eps\in\CD_\eps^{\gamma}$ and $f\in\CD^{\gamma}$, one has the bound
\begin{equation}
\label{eq:Schaudercontdisc}
\begin{aligned}
\$\CK_\gamma &f;\CK_{\gamma}^{\eps}f^\eps\$_{\gamma+\beta;\K}
\lesssim \|\Pi;\Pi^\eps\|_{\gamma;\dbar\K}\$f\$_{\gamma;\dbar\K} + \|\Pi^{(\eps)}\|_{\gamma;\dbar\K}^{(\eps)}\$f;f^\eps\$_{\gamma;\dbar\K} \\&+ C(\eps)\|\Pi\|_{\gamma;\dbar\K}\$f\$_{\gamma;\dbar\K} 
+\$\CK_\gamma^\eps f^\eps\$_{\gamma+\beta;\K,\eps}+ \sup_{\substack{(y,z)\in\K\\ \|y-z\|_\s < \eps}}\sup_{\beta <\gamma}\frac{\|\CK_\gamma f(z)-\Gamma_{zy}\CK_\gamma f(y)\|_\beta}{\|y-z\|_\s^{\gamma-\beta}},
\end{aligned}
\end{equation}
where the proportionality constant is independent of all parameters.
\end{theorem}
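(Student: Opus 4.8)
The plan is to run a continuous–discrete comparison version of the proof of Theorem~\ref{thm:Schauder}, substituting Assumption~\ref{a:TandKcontdisc} and the continuous–discrete reconstruction bound~\eqref{eq:reconstrcontdiscrete} at exactly the points where the single-model argument used Assumption~\ref{a:TandK} and the Reconstruction Theorem~\ref{thm:reconstruction}. (That $\CK_\gamma^\eps f^\eps\in\CD_\eps^{\gamma+\beta}$ is the content of Theorem~\ref{thm:Schauder} and $\CK_\gamma f\in\CD^{\gamma+\beta}$ is the continuous Schauder estimate, so the left-hand side of~\eqref{eq:Schaudercontdisc} makes sense.) First I would unwind Definition~\ref{def:Dgammacontdisc} at level $\gamma+\beta$: the quantity $\$\CK_\gamma f;\CK_\gamma^\eps f^\eps\$_{\gamma+\beta;\K}$ is the sum of a small-scale ($\|y-z\|_\s<\eps$) term involving only $\CK_\gamma f$, the discrete small-scale seminorm $\$\CK_\gamma^\eps f^\eps\$_{\gamma+\beta;\K;\eps}$, and a large-scale ($\eps\le\|y-z\|_\s\le1$) comparison term. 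The first two already appear on the right-hand side of~\eqref{eq:Schaudercontdisc} (as its last two summands), so all the work is to bound, for fixed $y,z\in\K$ with $\eps\le\|y-z\|_\s\le1$ and fixed $\ell<\gamma+\beta$, the quantity $\|\CK_\gamma f(z)-\CK_\gamma^\eps f^\eps(z)-\Gamma_{zy}\CK_\gamma f(y)+\Gamma_{zy}^\eps\CK_\gamma^\eps f^\eps(y)\|_\ell$ by $\|y-z\|_\s^{\gamma+\beta-\ell}$ times the first three summands on the right of~\eqref{eq:Schaudercontdisc}.

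As in Theorem~\ref{thm:Schauder} I would split this expression into its non-polynomial and polynomial components. For the non-polynomial part, properties~1 and~3 of Definition~\ref{def:I} reduce it, exactly as in~\eqref{eq:nonpolypart}, to $\CI\big(f(z)-\Gamma_{zy}f(y)-f^\eps(z)+\Gamma_{zy}^\eps f^\eps(y)\big)$, whose $\ell$-norm is $\lesssim\|f(z)-\Gamma_{zy}f(y)-f^\eps(z)+\Gamma_{zy}^\eps f^\eps(y)\|_{\ell-\beta}$ by continuity of $\CI$, and this is $\lesssim\|y-z\|_\s^{\gamma+\beta-\ell}\$f;f^\eps\$_{\gamma;\dbar\K}$ by the large-scale piece of the distance in Definition~\ref{def:Dgammacontdisc}. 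This is of the required form.

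For the polynomial part I would invoke the commutation identity of Lemma~\ref{lem:commutationrelation}, valid for an arbitrary model, for both $\CK_\gamma^\eps f^\eps$ and $\CK_\gamma f$, obtaining the decompositions $(\Gamma_{zy}^\eps\CK_\gamma^\eps f^\eps(y)-\CK_\gamma^\eps f^\eps(z))_k=\sum_n[\I_n+\II_n-\III_n]$ and $(\Gamma_{zy}\CK_\gamma f(y)-\CK_\gamma f(z))_k=\sum_n[\tilde\I_n+\tilde\II_n-\tilde\III_n]$ as in~\eqref{eq:polypart}, so that it remains to estimate $\sum_n\big[(\I_n-\tilde\I_n)+(\II_n-\tilde\II_n)-(\III_n-\tilde\III_n)\big]$. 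I would then split the $n$-sum into the two regimes $2^{-n}\le\|y-z\|_\s$ and $\|y-z\|_\s<2^{-n}$, exactly as in Theorem~\ref{thm:Schauder}. In the first regime, each difference is re-expanded by inserting the hybrid term built from the \emph{discrete} operator $T^\eps$ and \emph{discrete} model $\Pi^\eps$ applied to the \emph{continuous} increment $\Gamma_{zy}f(y)-f(z)$: the part comparing $\Pi^\eps$ acting on $\big(\Gamma_{zy}^\eps f^\eps(y)-f^\eps(z)\big)-\big(\Gamma_{zy}f(y)-f(z)\big)$ is handled by Assumption~\ref{a:TandK}(2)--(3) and contributes $\|\Pi^\eps\|_{\gamma;\dbar\K}^{(\eps)}\$f;f^\eps\$_{\gamma;\dbar\K}$ (here the power of $\|y-z\|_\s$ produced by summing $T$ over $n$ combines with that from the $\bar\zeta$-component of this increment to give precisely $\|y-z\|_\s^{\gamma+\beta-|k|_\s}$), while the part comparing $T^\eps(\Pi^\eps\,\cdot)$ with $T(\Pi\,\cdot)$ on the fixed continuous increment is handled by Assumption~\ref{a:TandKcontdisc}(1), whose two summands contribute respectively $\|\Pi;\Pi^\eps\|_{\gamma;\dbar\K}\$f\$_{\gamma;\dbar\K}$ and $C(\eps)\|\Pi\|_{\gamma;\dbar\K}\$f\$_{\gamma;\dbar\K}$. (The single borderline value $n=N$, at which $2^{-n}$ may drop below $\eps$ so that the $\geq\eps$-type norms no longer apply directly, is absorbed into the discrete small-scale seminorms; only finitely many such terms occur.) In the second regime I would first use the consistency relation Assumption~\ref{a:TandK}(1) to rewrite $\I_n+\II_n-\III_n$ as $-\I'_n+\II'_n-\III'_n$, as in~\eqref{eq:123rewrite}, and likewise for the continuous version, then bound $\I'_n-\tilde\I'_n$ again by the same hybrid-term split (Assumption~\ref{a:TandK}(2) together with Assumption~\ref{a:TandKcontdisc}(1)) and bound $(\II'_n-\III'_n)-(\tilde\II'_n-\tilde\III'_n)$ by the two-point comparison Assumption~\ref{a:TandKcontdisc}(2) combined with the splitting~\eqref{eq:piminusRrewrite} and the continuous–discrete reconstruction bound~\eqref{eq:reconstrcontdiscrete}; here, just as in Theorem~\ref{thm:Schauder}, the hypothesis $\gamma+\beta\notin\N$ is used so that the resulting sum over the finitely many $n$ with $2^{-n}>\|y-z\|_\s$ resums to a term of order $\|y-z\|_\s^{\gamma+\beta-|k|_\s}$ with no logarithmic loss. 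Summing the two regimes over $n\le N$ yields the bound on the large-scale comparison term, and hence~\eqref{eq:Schaudercontdisc}.

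I expect the main obstacle to be organisational rather than conceptual: keeping track of which of the three buckets on the right-hand side --- $\|\Pi;\Pi^\eps\|_{\gamma;\dbar\K}\$f\$_{\gamma;\dbar\K}$, $\|\Pi^\eps\|_{\gamma;\dbar\K}^{(\eps)}\$f;f^\eps\$_{\gamma;\dbar\K}$, $C(\eps)\|\Pi\|_{\gamma;\dbar\K}\$f\$_{\gamma;\dbar\K}$ --- each of the many mixed terms generated by the hybrid-term insertions and by the telescoping lands in, and verifying that nothing has been overlooked. The only genuinely new analytic inputs beyond the proof of Theorem~\ref{thm:Schauder} are Assumption~\ref{a:TandKcontdisc} and the comparison bound~\eqref{eq:reconstrcontdiscrete}; everything else is a careful re-run of that proof with ``bound one quantity'' replaced throughout by ``bound the difference of a discrete and a continuous quantity''.
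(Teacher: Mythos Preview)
Your proposal is correct and follows essentially the same approach as the paper's own proof, which is extremely brief: the paper simply remarks that the argument ``works along similar lines as the proof of Theorem~\ref{thm:Schauder}'', observes that the last two terms of~\eqref{eq:Schaudercontdisc} come directly from Definition~\ref{def:Dgammacontdisc}, and then illustrates how Assumption~\ref{a:TandKcontdisc}(1) applied to the analogue of~\eqref{eq:firstpolysmallscale} produces the $C(\eps)\|\Pi\|_{\gamma;\dbar\K}\$f\$_{\gamma;\dbar\K}$ contribution. Your write-up in fact supplies more of the organisational detail (the hybrid-term insertions, the two $n$-regimes, the use of~\eqref{eq:reconstrcontdiscrete}) than the paper does.
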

\begin{proof}
Since the proof works along similar lines as the proof of Theorem~\ref{thm:Schauder}, we refrain from providing
all the details. We however illustrate the appearance of the terms in the second line of \eref{eq:Schaudercontdisc}. First of all, the last two terms are a direct consequence of Definition~\ref{def:Dgammacontdisc} of $\$\cdot,\cdot\$_{\gamma;\K}$. To see how the first term appears, note that the term in~\eqref{eq:firstpolysmallscale} equals in the current context
\begin{equation}
\CQ_k(T_{n,\bar\zeta}(\Pi_z^\eps\CQ_{\bar\zeta}[\Gamma_{zy}^\eps f^\eps(y)-f^\eps(z)])(z))
- \CQ_k(T_{n,\bar\zeta}(\Pi_z\CQ_{\bar\zeta}[\Gamma_{zy}f(y)-f(z)])(z)).
\end{equation}
According to Assumption~\ref{a:TandKcontdisc} it may be bounded by
\begin{equation}
\label{eq:Qkeps0}
\begin{aligned}
2^{n(|k|_\s-\beta)}\Big(\sup_{\varphi_z^n}\big|\iota_\eps(\Pi_z^\eps& \CQ_{\bar\zeta}[\Gamma_{zy}^{\eps}f^{\eps}(y)-f^{\eps}(z)])(\varphi_z^n)- \Pi_z \CQ_{\bar\zeta}[\Gamma_{zy}f(y)-f(z)](\varphi_z^n)\big|\\
&+ C(\eps)\sup_{\varphi_z^n}\big|\Pi_z \CQ_{\bar\zeta}[\Gamma_{zy}f(y)-f(z)](\varphi_z^n)\big|\Big),
\end{aligned}
\end{equation}
and one see that the latter term in~\eqref{eq:Qkeps0} is responsible for the appearance of the term
$C(\eps)\|\Pi\|_{\gamma;\bar\K}\$f\$_{\gamma;\bar\K}.$  
\end{proof}

\subsection{Convolution operators in weigthed $\CD^\gamma_\eps$-spaces}
\label{S4.2}
In this section we extend the result from the previous section to spaces of singular modelled distributions. 
Before we state the main result of this subsection we need one more assumption.
\begin{assumption}
\label{ass:Schauderweightedsmall}
Let $\TT=(A,\CT, G)$ be a regularity structure and let $V$ be a sector of regularity $\alpha\in A$. Fix $\gamma,\beta >0$, $\eta\in \R$ and a compact set $\K$. We assume that
\begin{equation}
\label{eq:Schauderweightedsmall}
\$\CK_{\gamma}^{\eps} f\$_{\bar\Gamma,\bar\eta;\K;\eps}\lesssim \|\Pi^\eps\|_{\gamma;\dbar\K}^{(\eps)}\$ f\$_{\gamma,\eta;\dbar\K;\eps},
\end{equation}
where $\bar\Gamma=\gamma+\beta$ and $\bar\eta=(\alpha\wedge\eta)+\beta$ and $f\in\CD_\eps^{\gamma,\eta}(V,\Gamma^\eps)$.
Given a second discrete model $(\bar\Pi^\eps,\bar\Gamma^\eps)$ with associated convolution operator $\bar\CK_\gamma^\eps$, and $\bar f\in\CD_\eps^{\gamma,\eta}(V,\bar\Gamma^\eps)$ taking values in $V$, we assume that
\begin{equation}
\label{eq:Schauderweightedsmalldifference}
\$\CK_{\gamma}^{\eps} f;\bar \CK_\gamma^\eps\bar f\$_{\bar\Gamma,\bar\eta;\K;\eps}\lesssim \|\Pi^\eps\|_{\gamma;\dbar\K}^{(\eps)}\$ f;\bar f\$_{\gamma,\eta;\dbar\K}^{(\eps)} + \|\Pi^\eps-\bar\Pi^\eps\|_{\gamma;\dbar\K}^{(\eps)}\$f\$_{\gamma,\eta;\dbar\K}^{(\eps)}.
\end{equation}
Here, the proportionality constant is independent of $\eps$.
\end{assumption}

\begin{theorem}
\label{thm:Schauderweighted}
Under the same assumption as in Theorem~\ref{thm:Schauder} let $\alpha$ be the regularity of the sector $V$, let $\eta<\gamma$, assume that $\alpha\wedge\eta >-\mathfrak{m}$ and that the operators $\CK_{\gamma}^{\eps}$ and $\bar\CK_\gamma^\eps$ satisfy Assumption~\ref{ass:Schauderweightedsmall}. If we also assume that the reconstruction operator satisfies Assumption~\ref{ass:Rsmallscalesweighted}, then, provided that $\bar\Gamma=\gamma+\beta\notin\N$ and 
$\bar\eta=(\alpha\wedge\eta)+\beta\notin\N$ one has $\CK_{\gamma}^{\eps} f\in \CD^{\bar\Gamma,\bar\eta}_\eps(V)$.
Furthermore, one has the bound
\begin{equation}
\label{eq:Kdifferenceboundweighted}
\$\CK_{\gamma}^{\eps} f;\bar\CK_{\gamma}^{\eps}\bar f\$_{\bar\Gamma,\bar\eta;\K}^{(\eps)}\lesssim
\$f;\bar f\$_{\gamma,\eta;\dbar\K}^{(\eps)}+ \|\Pi^{\eps}-\bar\Pi^{\eps}\|_{\gamma;\dbar\K}^{(\eps)}.
\end{equation}
Let $(\Pi,\Gamma)$ be a continuous model with associated convolution operator $\CK_\gamma$. If additionally Assumption~\ref{a:TandKcontdisc} is satisfied, then for any $f\in\CD^{\gamma,\eta}(\Gamma)$ and any $f^\eps\in\CD_{\eps}^{\gamma,\eta}(\Gamma^\eps)$ we have the estimate
\begin{equation}
\label{eq:Kdiffweightedcontdisc}
\begin{aligned}
&\$\CK_\gamma f;\CK_{\gamma}^{\eps}f^\eps\$_{\bar\Gamma,\bar\eta;\K}
\lesssim \|\Pi;\Pi^\eps\|_{\gamma;\dbar\K}\$f\$_{\gamma,\eta;\dbar\K} + \|\Pi^{(\eps)}\|_{\gamma;\dbar\K}^{(\eps)}\$f;f^\eps\$_{\gamma,\eta;\dbar\K}\\
&+ C(\eps)\|\Pi\|_{\gamma;\dbar\K}\$f\$_{\gamma,\eta;\dbar\K} 
+\$\CK_\gamma^\eps f^\eps\$_{\gamma,\eta;\K;\eps}+ \sup_{\substack{(y,z)\in\K_P\\ \|y-z\|_\s < \eps}}\sup_{\beta <\gamma}\frac{\|\CK_\gamma f(z)-\Gamma_{zy}\CK_\gamma f(y)\|_\beta}{\|y-z\|_\s^{\gamma-\beta}\|y,z\|_P^{\eta-\gamma}}.
\end{aligned}
\end{equation}
\end{theorem}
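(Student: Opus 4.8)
The plan is to follow the same pattern as the proofs of Theorem~\ref{thm:Schauder} (for the large-scale $\CD^\gamma_\eps$-estimates and the convolution identity), Theorem~\ref{thm:recwithweights} (for passing from uniform to weighted near the hyperplane $P$) and Theorem~\ref{thm:Schaudercontdisc} (for the continuous/discrete comparison). Concretely, I would split the target seminorm $\$\CK_{\gamma}^{\eps} f\$^{(\eps)}_{\bar\Gamma,\bar\eta;\K}$ of Definition~\ref{def:weightedDgamma} into three pieces: (i) the pointwise bound $\|\CK_\gamma^\eps f(z)\|_\beta \lesssim \|z\|_P^{(\bar\eta-\beta)\wedge 0}$; (ii) the translation bound on the region $\K_P$ with $\eps\leq\|y-z\|_\s\leq 1$; and (iii) the small-scale term $\$\CK_\gamma^\eps f\$_{\bar\Gamma,\bar\eta;\K;\eps}$, which is handed to us by Assumption~\ref{ass:Schauderweightedsmall}. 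For (i) and (ii) I would reuse the decomposition of $(\Gamma^\eps_{zy}\CK_\gamma^\eps f(y))_k-(\CK_\gamma^\eps f(z))_k = \sum_n[\I_n+\II_n-\III_n]$ from \eqref{eq:polypart} together with Lemma~\ref{lem:commutationrelation}, but now using the \emph{weighted} hypothesis \eqref{eq:relationDgammanorms} from Assumption~\ref{ass:weightedDgamma} to convert each occurrence of $\$f\$_{\gamma;\dbar\K;\eps}$ into $d_\s(z,P)^{\eta-\gamma}\$f\$_{\gamma,\eta;\dbar\K;\eps}$, exactly as in \cite[Prop.~6.16]{Regularity}. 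The analogue of the Reconstruction Theorem input is now the weighted reconstruction bound \eqref{eq:recwithweights1} (and \eqref{eq:recwithweights3} near $P$), which supplies the $d_\s(z,P)^{\eta-\gamma}\delta^\gamma$ and $\delta^{\alpha\wedge\eta}$ control of $\CR^\eps f-\Pi^\eps_z f(z)$ needed to bound the $\III_n$-terms and, after integrating against $D^k_1 K_n^\eps$, the defining expression \eqref{eq:convolution} of $\CK^\eps_\gamma f(z)$ itself.

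For the pointwise bound (i) near $P$ one has to be slightly careful: when $\|z\|_P<\eps$ this is part of the small-scale seminorm and comes for free from Assumption~\ref{ass:Schauderweightedsmall}; when $\|z\|_P\geq\eps$ one writes $\CK_\gamma^\eps f(z)=\CI f(z)+\sum_\zeta(T_\zeta^\eps\Pi_z^\eps\CQ_\zeta f(z))(z)+(T_\gamma^\eps(\CR^\eps f-\Pi_z^\eps f(z)))(z)$, bounds $\|\CI f(z)\|_\beta\lesssim\|f(z)\|_{\beta-\beta}$ by Definition~\ref{def:I}, uses \eqref{eq:Tcomponentest} together with \eqref{eq:Pi} for the $T_\zeta^\eps\Pi_z^\eps$-terms, and applies \eqref{eq:recwithweights1} at scale $\delta\approx\|z\|_P$ (or \eqref{eq:recwithweights3} if $\|z\|_P$ is comparable to $\eps$) for the last term; summing the geometric series over the relevant dyadic scales $2^{-n}$ between $\eps$ and $\|z\|_P$ produces the weight $\|z\|_P^{(\bar\eta-\beta)\wedge 0}$, using here that $\bar\eta=(\alpha\wedge\eta)+\beta>0$ is guaranteed by $\alpha\wedge\eta>-\mathfrak m$. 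The convolution identity \eqref{eq:convolutionidentity} itself, and the difference bounds \eqref{eq:Kdifferenceboundweighted} and \eqref{eq:Kdiffweightedcontdisc}, are then obtained exactly as in the proofs of Theorems~\ref{thm:Schauder} and~\ref{thm:Schaudercontdisc}: one uses the bilinear identities \eqref{eq:pidifference} and \eqref{eq:piminusRrewrite}, the comparison Assumption~\ref{a:TandKcontdisc} (which is where the $C(\eps)\|\Pi\|_{\gamma;\dbar\K}\$f\$_{\gamma,\eta;\dbar\K}$ term originates, precisely as in \eqref{eq:Qkeps0}), and the weighted reconstruction estimates from Theorem~\ref{thm:recwithweights}, Items~3.\ and~4. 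The last supremum term in \eqref{eq:Kdiffweightedcontdisc} over $\|y-z\|_\s<\eps$ is, as in Theorem~\ref{thm:Schaudercontdisc}, simply the sub-$\eps$ part of the continuous-side distance $\$\cdot;\cdot\$_{\gamma,\eta;\K}$ and is added by hand through Definition~\ref{def:weightedDgammacontdisc}.

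I expect the main obstacle to be the bookkeeping near the hyperplane $P$ when the dyadic scale $2^{-n}$ crosses the thresholds $\eps$ and $d_\s(z,P)$ simultaneously. Away from $P$ everything is a weighted repetition of the $\CD^\gamma_\eps$ arguments, but close to $P$ one must interleave three regimes — $2^{-n}>d_\s(z,P)$, $\eps<2^{-n}\leq d_\s(z,P)$, and $2^{-n}\leq\eps$ — and in each regime the correct small-scale norm ($\$\cdot\$_{\gamma,\eta;\cdot;\eps}$ versus $\$\cdot\$_{\gamma,\eta;\cdot}^{(\eps)}$) and the correct reconstruction estimate (\eqref{eq:recwithweights1} versus \eqref{eq:recwithweights3}) have to be used, while checking that the constraint $d_\s(z,P)\geq\bfc\eps+2\delta$ appearing in Theorem~\ref{thm:recwithweights}, Item~1.\ is met (and falling back on Item~2.\ when it is not). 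This is essentially the discrete-scale analogue of the argument in \cite[Sec.~6.3]{Regularity}, and the conditions $\gamma+\beta\notin\N$, $(\alpha\wedge\eta)+\beta\notin\N$ enter exactly as there, to ensure the relevant sums of the form $\sum_n 2^{n(\text{integer}-\bar\Gamma)}$ and $\sum_n 2^{n(\text{integer}-\bar\eta)}$ converge at the right endpoint. I would therefore present the away-from-$P$ estimates in full detail (mirroring the proof of Theorem~\ref{thm:Schauder} verbatim with the weight inserted) and then indicate the three-regime splitting near $P$, referring to the proof of Theorem~\ref{thm:recwithweights} and to \cite[Prop.~6.16]{Regularity} for the remaining routine summations, and omitting the parallel difference estimates as "obtained in the same way".
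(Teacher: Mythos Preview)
Your proposal is correct and follows essentially the same approach as the paper: proceed as in the proof of Theorem~\ref{thm:Schauder} for the non-integer components and for the dyadic scales $2^{-n}\le\|y,z\|_P$, invoke the weighted reconstruction bounds of Theorem~\ref{thm:recwithweights} in place of Theorem~\ref{thm:reconstruction}, give the detailed analysis only in the regime $2^{-n}>\|y,z\|_P$, and refer to \cite[Prop.~6.16]{Regularity} for the pointwise bound and the parallel difference estimates. One small correction: the hypothesis $\alpha\wedge\eta>-\mathfrak m$ does \emph{not} guarantee $\bar\eta>0$ (that would require $\beta\ge\mathfrak m$, which is not assumed); its role is rather to make the reconstruction estimate \eqref{eq:recwithweights3} available, and the pointwise bound $\|\CK_\gamma^\eps f(z)\|_\beta\lesssim\|z\|_P^{(\bar\eta-\beta)\wedge 0}$ holds regardless of the sign of $\bar\eta$.
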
 
\begin{proof}
To show that $\CK_{\gamma}^{\eps} f\in \CD^{\bar\Gamma,\bar\eta}_\eps(V)$ we proceed as in the proof of Theorem~\ref{thm:Schauder}.
We first note that for non-integer values $k$ we can bound the corresponding terms exactly as in the proof of Theorem~\ref{thm:Schauder}.
To bound the differences $\|\CK_{\gamma}^{\eps} f(z)-\Gamma_{zy}^\eps\CK_{\gamma}^{\eps} f(y)\|_k$ for integer values $k$, we distinguish the cases $2^{-n} \leq \|y-z\|_\s$, $2^{-n}\in (\|y-z\|_\s,\frac12\|y,z\|_P]$ and $2^{-n} >\|y-z\|_\s\vee \frac12\|y,z\|_P$. In all these cases it is assumed that $\eps\leq \|y-z\|_\s$ and $\|y-z\|_\s\leq \|y,z\|_P$. The first two cases can be dealt with in a very similar way as in the proof of Theorem~\ref{thm:Schauder}.
We will now provide the details for the third case for the expression $\|\CK_{\gamma}^{\eps} f(z)-\Gamma_{zy}^\eps\CK_{\gamma}^{\eps} f(y)\|_k$ and we refer to~\cite[Prop.~6.16]{Regularity} for a proof in a similar setup.
Following the proof of Theorem~\ref{thm:Schauder} we see that we need to estimate $[-\I'_n+\II'_n-\III'_n]$ in~\eqref{eq:123rewrite}. The first term can again be dealt with as in the proof of Theorem~\ref{thm:Schauder}.
Regarding the difference of the other two terms we may invoke the third item in Assumption~\ref{a:TandK} to estimate
\begin{equation}
\label{eq:IIminusIIIweighted}
|\II'_n-\III'_n|\leq 2^{n(\lceil\gamma+\beta\rceil-\beta)}\|y-z\|_\s^{\lceil\gamma+\beta\rceil-|k|_\s}
\sup_{\varphi \in \Phi}\big|\iota_\eps(\Pi_y^\eps f(y)-\CR^\eps f)(\varphi_{z}^{n-1})\big|.
\end{equation}
To estimate the right hand side of~\eqref{eq:IIminusIIIweighted} we apply the triangle inequality to the last term. We further estimate, making use of Theorem~\ref{thm:recwithweights},
\begin{equation}
\label{eq:estofRweighted}
\big|\iota_\eps(\CR^\eps f)(\varphi_{z}^{n-1})\big|\lesssim 2^{-n(\alpha\wedge\eta)}.
\end{equation}
Now note that the range of values of $n$ we are considering implies in particular that $2^n\lesssim \|y,z\|_P^{-1}$.
Plugging~\eqref{eq:estofRweighted} into~\eqref{eq:IIminusIIIweighted} and summing over those $n$ yields a bound that is a multiple of $\|y,z\|_P^{(\alpha\wedge\eta)+\beta-\lceil\gamma+\beta\rceil}\|y-z\|_\s^{\lceil\gamma+\beta\rceil-|k|_\s}$.
Note that in any case $\|y-z\|_\s\leq \|y,z\|_P$, so that
\begin{equation}
\|y,z\|_P^{(\alpha\wedge\eta)+\beta-\lceil\gamma+\beta\rceil}
\leq \|y,z\|_P^{(\alpha\wedge\eta)-\gamma}\|y-z\|_\s^{\gamma+\beta-\lceil\gamma+\beta\rceil},
\end{equation}
yielding the desired estimate.
It remains to estimate the term involving $\Pi_y^\eps f(y)$ in~\eqref{eq:IIminusIIIweighted}.
Shifting the model, we have
\begin{equs}
\big|\iota_\eps(\Pi_{z}^\eps \Gamma_{zy}^\eps f(y))(\varphi_{z}^{n-1})\big|
&\leq \sum_{\bar\zeta< \gamma, \zeta\leq \bar\zeta}
\big|\iota_\eps(\Pi_{z}^\eps\CQ_\zeta (\Gamma_{zy}^\eps \CQ_{\bar\zeta}f(y)))(\varphi_{z}^{n-1})\big| \\
\label{eq:sumoverzetabarzeta}
&\lesssim \sum_{\bar\zeta<\gamma, \zeta\leq \bar\zeta}
2^{-n\zeta}\|y-z\|_\s^{\bar\zeta-\zeta}\|y\|_P^{(\eta-\bar\zeta)\wedge 0}.
\end{equs}
To estimate this, we first note that we have the bound $\|y\|_P^{(\eta-\bar\zeta)\wedge 0}\leq \|y,z\|_P^{(\eta-\bar\zeta)\wedge 0}$ and $\|y-z\|_\s^{\bar\zeta-\zeta}\leq 2^{-n(\bar\zeta-\zeta)}$. We distinguish two cases. First, if $\eta-\bar\zeta \geq 0$, 
the corresponding terms in~\eqref{eq:sumoverzetabarzeta} are bounded by
$2^{-n\bar\zeta}\leq 2^{-n(\alpha\wedge\eta)}$. This estimate is of the same form as in~\eqref{eq:estofRweighted}, 
so we may conclude as above.
If on the other hand $\eta-\bar\zeta<0$, the corresponding terms in~\eqref{eq:sumoverzetabarzeta} are bounded by
$2^{-n\bar\zeta}\|y,z\|_P^{\eta-\bar\zeta}$.
Taking the prefactor $2^{n(\lceil\gamma+\beta\rceil-\beta)}\|y-z\|_\s^{\lceil\gamma+\beta\rceil-|k|_\s}$ coming from~\eqref{eq:IIminusIIIweighted} into account, and summing over the range of values of $n$ under consideration yields the bound
\begin{equation}
\|y,z\|_P^{\beta-\lceil\gamma+\beta\rceil+\eta}\|y-z\|_\s^{\lceil\gamma+\beta\rceil-|k|_\s}.
\end{equation}
Using that $\|y-z\|_\s\leq \|y,z\|_P$ we see that this is indeed bounded by
\begin{equation}
\|y-z\|_\s^{\gamma+\beta-|k|_\s}\|y,z\|_P^{\eta-\gamma},
\end{equation}
as desired. 
The bound
\begin{equation}
\|\CK_\gamma^{\eps} f(z)\|_{\beta}\lesssim \|z\|_P^{(\bar\eta-\beta)\wedge 0}, 
\end{equation}
may be obtained in a similar way as in~\cite[Prop.~6.16]{Regularity}.
The expression $\|\CK_{\gamma}^{\eps} f(z)-\Gamma_{zy}^\eps f(y)-\bar\CK_{\gamma}^{\eps}\bar f(z)+\bar\Gamma_{zy}^\eps\bar\CK_{\gamma}^{\eps}\bar f(y)\|_k$ can also be dealt with in a similar way and the proof of~\eqref{eq:Kdiffweightedcontdisc} works along the same lines. 
\end{proof}
\section{Local operations}
\label{S5}
\subsection{Multiplication}

One of the surprising results in~\cite{Regularity} is that the multiplication between two (singular) modelled distributions behaves very much like the multiplication of two continuous functions.
We show that under a suitable assumption the same holds true in the setting of the current article.
Before we dive into the details we shortly remind the reader of~\cite[Def.~4.1]{Regularity}, which defines a product to be a continuous bilinear map $(a,b)\mapsto a\star b$ such that
\begin{claim}
\item For every $a\in \CT_\alpha,$ and $b\in \CT_\beta$, one has $a\star b\in \CT_{\alpha+\beta}$.
\item There exists a unit vector $\one \in \CT_0$ such that $\one\star a=a\star \one$ for every $a\in \CT$.
\end{claim}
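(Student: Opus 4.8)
The statement recalls the defining properties of a \emph{product} on a regularity structure in the sense of \cite[Def.~4.1]{Regularity}: a continuous bilinear map $(a,b)\mapsto a\star b$ on $\CT$ for which homogeneities add and which possesses a unit $\one\in\CT_0$. Since this is the definition of the structure that the rest of Section~\ref{S5} operates on, what has to be checked is that such a $\star$ is genuinely available in our setting and that the two displayed properties hold for it. By Assumption~\ref{a:model} the ambient structure $\TT$ always contains the polynomial structure $\bar\TT$, and in every concrete example the full $\CT$ is spanned by decorated symbols carrying a canonical product; the plan is therefore to exhibit $\star$ on a homogeneous basis, verify additivity of homogeneity and the existence of a unit on basis elements, and then extend by bilinearity.

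First I would fix $\star$ on the polynomial sector by $X^k\star X^\ell := X^{k+\ell}$ for $k,\ell\in\N^d$, extended bilinearly on $\bar\CT$, and on the remaining symbols by the product prescribed by the concrete construction (for the structures of \cite{Regularity,BHZalg} this is juxtaposition of decorated trees, with polynomial decorations added at the root). For property~(1), the point is that in each such construction the homogeneity of a basis symbol is the sum of fixed weights attached to its constituent decorations (the scaled degree $|k|_\s$ of a polynomial factor, together with the weights of the other generators), and $\star$ is defined so that the constituents of $a\star b$ are exactly those of $a$ together with those of $b$; hence $|a\star b|=|a|+|b|$ on basis symbols, in particular $|k+\ell|_\s=|k|_\s+|\ell|_\s$ on $\bar\CT$. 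Writing a general $a\in\CT_\alpha$, $b\in\CT_\beta$ as finite combinations of such basis symbols and using bilinearity then gives $a\star b\in\CT_{\alpha+\beta}$. For property~(2) I would take $\one:=X^0$: its homogeneity is $|0|_\s=0$, so $\one\in\CT_0$, and on basis symbols $X^0\star X^k=X^k$ while adjoining the empty decorated tree leaves any other symbol unchanged, so $\one\star a=a\star\one=a$ for all $a\in\CT$ after bilinear extension. Continuity of $\star$ is then automatic, since each $\CT_\alpha$ is finite-dimensional, $A$ is locally finite, and elements of $\CT=\bigoplus_{\alpha\in A}\CT_\alpha$ have only finitely many nonzero components, so a bilinear map valued in these graded spaces is continuous for the natural topology.

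The one point requiring care is well-definedness: one must know that $\CT$ itself — rather than some larger free span of symbols — is closed under $\star$. In the polynomial sector this is the triviality $\N^d+\N^d\subseteq\N^d$; for the regularity structures used in applications it is built into the definition of the symbol set, which is designed to be closed under the tree product, and for the abstract framework of the present article ``a product'' simply means any such map supplied together with $\TT$. So I do not expect a serious obstacle here — the content of the statement is the bookkeeping that homogeneities add on generators and that $X^0$ acts as an identity, the remainder being a routine bilinear extension.
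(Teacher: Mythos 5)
There is nothing to prove here, and the paper accordingly offers no proof: the two displayed items are the \emph{defining} properties of a product in the sense of \cite[Def.~4.1]{Regularity}, which the paper is merely recalling before stating Assumption~\ref{ass:multiplication}. A product is part of the data one supplies together with the regularity structure; the statement is an axiom list, not a proposition about an already-given object. You recognise this in your opening sentence but then pivot to proving something different, namely that a map $\star$ with these properties \emph{exists} on the structures arising in applications. That existence claim is true and your verification of it is essentially correct (homogeneities of decorated trees add under juxtaposition, $X^0$ acts as the unit, bilinear extension and continuity are routine on a graded space with finite-dimensional homogeneous components), but it is not the content of the statement and is not something the paper asserts or needs: in the abstract framework of Section~\ref{S5} the product is simply assumed given, and the only substantive hypotheses placed on it are $\gamma$-regularity of the pair of sectors and the analytic bounds of Assumption~\ref{ass:multiplication}.

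One small caution on your reading of item (2): as written it only asserts $\one\star a=a\star\one$, i.e.\ that $\one$ commutes with every element; the intended meaning (as in \cite[Def.~4.1]{Regularity}) is of course $\one\star a=a\star\one=a$, which is what you verify. Since you were asked to prove a definition, the mismatch with the paper is one of framing rather than of mathematics, but be aware that your argument establishes existence of \emph{a} product on particular structures, whereas the paper's subsequent results hold for \emph{any} product satisfying these axioms together with Assumption~\ref{ass:multiplication}.
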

Given a regularity structure $\TT$ and a pair of sectors $(V,W)$, we say that $(V,W)$ is $\gamma$-regular if $\Gamma(a\star b)=(\Gamma a)\star (\Gamma b)$ for every $\Gamma\in G$, for every $a\in V_\alpha$ and $b\in W_\beta$ such that $\alpha+\beta <\gamma$.
\begin{assumption}
\label{ass:multiplication}
Let $(V,W)$ be a pair of sectors of regularity $\alpha_1$ and $\alpha_2$, respectively. Let $Z^\eps=(\Pi^\eps,\Gamma^\eps)$ and $\bar Z^\eps=(\bar\Pi^\eps,\bar\Gamma^\eps)$ be two discrete models. Let $f_1\in \CD^{\gamma_1}_\eps(V,\Gamma^\eps)$, $g_1\in \CD^{\gamma_1}_\eps(V,\bar \Gamma^\eps)$, $f_2\in \CD^{\gamma_2}_\eps(W,\Gamma^\eps)$, and $g_2\in \CD^{\gamma_2}_\eps(W,\bar \Gamma^\eps)$ and let $\gamma= (\gamma_1+\alpha_2)\wedge (\gamma_2+\alpha_1)$. Then, provided that $(V,W)$ is $\gamma$-regular, we assume that for every compact set $\K$
\begin{equs}
\label{eq:multiplication}
\$f_1\star f_2\$_{\gamma;\K;\eps}&\lesssim \$f_1\$_{\gamma_1;\K;\eps} \$f_2\$_{\gamma_2;\K;\eps},\\
\$f_1\star f_2;g_1\star g_2\$_{\gamma;\K;\eps}&\lesssim \$f_1;g_1\$_{\gamma_1;\K,\eps} + \$f_2;g_2\$_{\gamma_2;\K;\eps}+
\|\Gamma^\eps;\bar \Gamma^\eps\|_{\gamma_1+\gamma_2;\K}^{(\eps)}.
\end{equs}
If $f_1\in \CD^{\gamma_1,\eta_1}_\eps(V,\Gamma^\eps)$, $g_1\in \CD^{\gamma_1,\eta_1}_\eps(V,\bar \Gamma^\eps)$, $f_2\in \CD^{\gamma_2,\eta_2}_\eps(W,\Gamma^\eps)$, and $g_2\in \CD^{\gamma_2,\eta_2}_\eps(W,\bar \Gamma^\eps)$ we further assume that
\begin{equs}
\label{eq:multiplicationweighted}
\$f_1\star f_2\$_{\gamma,\eta;\K;\eps}&\lesssim \$f_1\$_{\gamma_1,\eta_1;\K;\eps} \$f_2\$_{\gamma_2,\eta_2;\K;\eps},\\
\$f_1\star f_2;g_1\star g_2\$_{\gamma,\eta;\K;\eps}&\lesssim \$f_1;g_1\$_{\gamma_1,\eta_1;\K,\eps} + \$f_2;g_2\$_{\gamma_2,\eta_2;\K;\eps}+
\|\Gamma^\eps;\bar \Gamma^\eps\|_{\gamma_1+\gamma_2;\K}^{(\eps)},
\end{equs}
where $\eta=(\eta_1+\alpha_2)\wedge (\eta_2+\alpha_1)\wedge (\eta_1+\eta_2)$.
\end{assumption}
Under Assumption~\ref{ass:multiplication}, with the same choice of coefficients, a straightforward adaptation of the 
proofs in~\cite[Sections 4 and 6.2]{Regularity} yield that:
\begin{itemize}
\item $f_1\star f_2\in \CD^{\gamma}_\eps$ ($f_1\star f_2\in \CD^{\gamma,\eta}_\eps$) provided that $f_1\in \CD^{\gamma_1}_\eps(V)$ ($f_1\in \CD^{\gamma_1,\eta_1}_\eps(V)$) and $f_2\in \CD^{\gamma_2}_\eps(W)$ ($f_2\in \CD^{\gamma_2,\eta_2}_\eps(W)$).
\item The product between two (singular) modelled distributions is continuous. See~\cite[Props~4.10, 6.12]{Regularity} for precise quantitative statements.
\end{itemize} 
One can furthermore verify that all examples mentioned in Section~\ref{S2} (discrete, semidiscrete, continuous and transparent) satisfy Assumption~\ref{ass:multiplication}.

\subsection{Composition with a smooth function}

We shortly review the setup of \cite[Sec.~4.2]{Regularity}.
Given a function-like sector $V$ (i.e., a sector with regularity zero), one can write $a\in V$ as $a=\bar{A}\one+ \tilde{a}$ with $\tilde{a}\in \CT_0^{+}$. For a smooth function
$F:\R^n\to\R$, we define
\begin{equation}
\hat{F}(a)=\sum_k \frac{D^k F(\bar{A})}{k!}\tilde{a}^{\star k}\;,
\end{equation}
where the sum is locally finite. Here, $a=(a_1,\ldots, a_n)$ with $a_i\in V$ and $k$ is a multiindex. 
\begin{assumption}
\label{ass:composition}
Let $V$ be a function-like sector. Fix $\gamma>0$, and let $F\in \CC^{\kappa}(\R^n,\R)$ for some $\kappa\geq \gamma/\zeta\vee 1$, where $\zeta>0$ is the smallest value such that $V_{\zeta}\neq \emptyset$. Given a collection of $n$ functions $f_i\in \CD^{\gamma}_\eps(V)$ for a $\gamma$-regular sector $V$, define $\hat{F}_\gamma(f)(x)= Q_\gamma^{-}\hat{F}(f(x))$, where $f=(f_1,\ldots, f_n)$. We assume that for every compact set $\K$,
\begin{equation}
\label{eq:composition}
\$\hat{F}_\gamma (f)\$_{\gamma;\K;\eps}\lesssim 1.
\end{equation}
If furthermore $\kappa\geq (\gamma/\zeta\vee 1)+1$, then if $g=(g_1,\ldots,g_n)$ for some functions $g_1,\ldots, g_n\in\CD_\eps^{\gamma}(V)$, we also assume that
\begin{equation}
\label{eq:difference}
\$\hat{F}(f);\hat{F}(g)\$_{\gamma;\K;\eps}\lesssim \$f;g\$_{\gamma;\K}.
\end{equation}
In~\eqref{eq:composition} the proportionality constant is allowed to depend only on the norm of $f$, whereas in~\eqref{eq:difference} it is allowed to depend also on the norm of $g$.
In case that $f_i,g_i\in \CD^{\gamma,\eta}_\eps(V)$ for some $\eta\in[0,\gamma]$, we assume the bounds~\eqref{eq:composition} and~\eqref{eq:difference} to hold for the norms $\$\cdot\$_{\gamma,\eta;\K;\eps}$ and $\$\cdot;\cdot\$_{\gamma,\eta;\K;\eps}$, respectively.
\end{assumption}
The arguments in~\cite[Secs~4.2, 6.3]{Regularity} then show that $\hat{F}_\gamma(f)$ defined as above defines an element in $\CD^{\gamma}_\eps(V)$ and $\CD^{\gamma,\eta}_\eps(V)$, respectively. Moreover, in the case that  $\kappa\geq (\gamma/\zeta\vee 1)+1$ they also show that $f\mapsto\hat{F}(f)$ is locally Lipschitz continuous in $\CD_\eps^{\gamma}(V)$ and $\CD_\eps^{\gamma,\eta}(V)$, respectively. The same arguments also show that Assumption~\ref{ass:composition} is satisfied for the four examples introduced in Section~\ref{S2}.\\

\subsection{Differentiation}

Following \cite[Def.~5.25]{Regularity}, given a sector $V$, we say that a family of continuous operators $\sD_i:V\to \CT$
for $i$ in some finite index set $I$, is an abstract collection of derivations if
\begin{itemize}
\item[1.] There is a map $g:I\to\{1,2,\ldots,d\}$ such that $\sD_i a\in \CT_{\alpha-\s_i}$ for every $a\in V_\alpha$, and every $i \in I$,
\item[2.] one has $\Gamma\sD_i a =\sD_i\Gamma a$ for every $a\in V$ and every $i \in I$.
\end{itemize}
Furthermore, a model $(\Pi,\Gamma)$ on $\R^d$ is said to be compatible with $\sD$ if the identity $D_i\Pi_z a= \Pi_z\sD_i a$ holds for every $a\in V$, $z\in\R^d$ and every $i$ where $D_i$ denotes the usual distributional derivative 
in some direction $v_i \in \R^d$ which belongs to the subspace spanned by those directions $j$ such that
$\s_j = \s_{g(i)}$.

\begin{remark}
This is a minor generalisation of the setting of \cite{Regularity} which is natural in some discrete settings.
For example, in the case of a one-dimensional finite difference discretisation of the derivative, one naturally
has one operation corresponding to left-differences and one corresponding to right-differences.
Similarly, if we consider a discretisation of the plane by a triangular grid, we have six natural
differentiation operators.
\end{remark}

This notion of compatibility is not well suited to the current context when $\eps > 0$. Indeed, given $f \in \CX_\eps$, 
there is no reason to assume in general that $D_i \iota_\eps f$ is again in the range of $\iota_\eps$. 
Instead, we make the following assumption where, 
given a compact set $\K$ and $h\in \R$, we denote by $\tau_h\K$ the translation of $\K$ in direction $h$, namely
$\tau_h\K= \K+h$:
\begin{assumption}
\label{ass:differentationonXeps}
There are operators $\mathfrak{D}_{i}^\eps$, $i\in I$, on $\CX_\eps$ such that for all $f\in\CX_\eps$, all $i\in I$, all $\alpha\in\R$, all compact sets $\K_\eps$, and all $z\in\R^d$,
\begin{equation}
\label{eq:Dismallscale}
\|\mathfrak{D}_{i}^\eps f\|_{\alpha-\s_{g(i)};\K_\eps;z;\eps}\lesssim \sup_{h\in\CB_\s(0,\eps)} \|f\|_{\alpha;\tau_h\K_{\eps};z;\eps},
\end{equation}
with a proportionality constant that is independent of $\eps$. 
\end{assumption}
The main definition of this section then reads as follows.
\begin{definition}
\label{def:compatibility}
With the same notation as in Assumption~\ref{ass:differentationonXeps}, we say that a family of continuous operators $\sD_{i}:V\to \CT$, $i\in I$, is an abstract gradient for $\R^d$ with scaling $\s$ if $\sD_{i}a\in\CT_{\alpha-\s_{g(i)}}$ for every $a\in V_\alpha$ and every $i$, and if Property \rm{2.} above is satisfied.
We say that a model $(\Pi^\eps,\Gamma^\eps)$ is compatible with $\sD$ if the identity
\begin{equation}
\label{eq:compatibility}
\mathfrak{D}_{i}^\eps \Pi_z^\eps = \Pi_z^\eps \sD_{i},
\end{equation}
holds for all $i$ and all $z\in\R^d$.
\end{definition}
We have the following result.
\begin{proposition}
\label{prop:differentation}
Let $f\in \CD^{\gamma}_{\alpha,\eps}$ for some $\gamma >\s_{g(i)}$ and some model compatible with $\sD$. Then, $\sD_i f\in  \CD^{\gamma-\s_{g(i)}}_{\alpha-\s_{g(i)},\eps}$, provided that for all compact sets $\K$,
\begin{equation}
\$\sD_i f\$_{\gamma-\s_{g(i)};\K;\eps}\lesssim \$f\$_{\gamma;\K;\eps},
\end{equation}
Under the same assumption, the identity $\CR^\eps \sD_i f= \mathfrak{D}_{i}^\eps \CR^\eps f + H_{i}^\eps f$ holds. Here, the operator $H_{i}^\eps$ satisfies
\begin{equation}
\label{eq:H}
\| H_{i}^\eps f\|_{\gamma-\s_{g(i)};\K_\eps;z;\eps}\lesssim \|\Pi^\eps\|_{\gamma;\bar\K_\eps^\eps}^{(\eps)}\sup_{h\in\CB_\s(0,\eps)} \$ f\$_{\gamma;\tau_h\K_\eps;\eps},
\end{equation}
for all compact sets $\K_\eps$ (of diameter at most $2\eps$) and all $z\in\R^d$. Here, we denoted by $\bar\K_\eps^{\eps}$ the $(1+\eps)$-fattening of $\K_\eps$.
\end{proposition}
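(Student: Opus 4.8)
The plan is to mimic the proof of the corresponding result in~\cite[Sec.~5.3]{Regularity}, splitting the argument into an algebraic/combinatorial part (showing $\sD_i f \in \CD^{\gamma-\s_{g(i)}}_{\alpha-\s_{g(i)},\eps}$ on large scales, the small-scale part being the hypothesis) and an analytic part (the identity $\CR^\eps\sD_i f = \mathfrak{D}^\eps_i\CR^\eps f + H^\eps_i f$ together with the bound~\eqref{eq:H}). First I would check the membership in $\CD^{\gamma-\s_{g(i)}}_{\alpha-\s_{g(i)},\eps}$: for $\eps\leq\|y-z\|_\s\leq 1$ and $\beta<\gamma-\s_{g(i)}$ one writes
\begin{equation}
\|\sD_i f(z)-\Gamma^\eps_{zy}\sD_i f(y)\|_\beta = \|\sD_i\bigl(f(z)-\Gamma^\eps_{zy}f(y)\bigr)\|_\beta \lesssim \|f(z)-\Gamma^\eps_{zy}f(y)\|_{\beta+\s_{g(i)}}\lesssim \|y-z\|_\s^{\gamma-\s_{g(i)}-\beta}\$f\$^{(\eps)}_{\gamma;\K},
\end{equation}
using Property~2 of an abstract gradient (which gives $\sD_i\Gamma^\eps = \Gamma^\eps\sD_i$), the continuity and degree-lowering property of $\sD_i$, and that $\beta+\s_{g(i)}<\gamma$. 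Combined with the hypothesis $\$\sD_i f\$_{\gamma-\s_{g(i)};\K;\eps}\lesssim\$f\$_{\gamma;\K;\eps}$ this gives $\sD_i f\in\CD^{\gamma-\s_{g(i)}}_{\alpha-\s_{g(i)},\eps}$, so that $\CR^\eps\sD_i f$ is well defined by Assumption~\ref{a:rec}.

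Next I would define $H^\eps_i f \overset{\text{def}}{=} \CR^\eps\sD_i f - \mathfrak{D}^\eps_i\CR^\eps f$ and verify~\eqref{eq:H} using the locality of the seminorms (item~2 of Definition~\ref{def:admissible}) to reduce to a bound at the level of $\Pi^\eps$ acting on local data. The key algebraic input is that the model is compatible with $\sD$, i.e.\ $\mathfrak{D}^\eps_i\Pi^\eps_z = \Pi^\eps_z\sD_i$ on $V$, so that $\mathfrak{D}^\eps_i\bigl(\Pi^\eps_z f(z)\bigr) = \Pi^\eps_z\sD_i f(z)$. Writing, for a compact set $\K_\eps$ of diameter at most $2\eps$ and $z$ nearby,
\begin{equation}
\begin{aligned}
H^\eps_i f &= \bigl(\CR^\eps\sD_i f - \Pi^\eps_z\sD_i f(z)\bigr) - \mathfrak{D}^\eps_i\bigl(\CR^\eps f - \Pi^\eps_z f(z)\bigr),
\end{aligned}
\end{equation}
one estimates the first bracket directly by Assumption~\ref{a:rec} applied to $\sD_i f$, yielding a bound by $\|\Pi^\eps\|^{(\eps)}_{\gamma-\s_{g(i)};\bar\K_\eps}\$\sD_i f\$_{\gamma-\s_{g(i)};\K_\eps;\eps}$, which is controlled by the right-hand side of~\eqref{eq:H} using the hypothesis on $\$\sD_i f\$$ and the fact that $\|\Pi^\eps\|^{(\eps)}_{\gamma-\s_{g(i)};\cdot}\leq\|\Pi^\eps\|^{(\eps)}_{\gamma;\cdot}$. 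For the second bracket one invokes Assumption~\ref{ass:differentationonXeps}: with $g = \CR^\eps f - \Pi^\eps_z f(z) \in \CX_\eps$,
\begin{equation}
\|\mathfrak{D}^\eps_i g\|_{\gamma-\s_{g(i)};\K_\eps;z;\eps}\lesssim \sup_{h\in\CB_\s(0,\eps)}\|g\|_{\gamma;\tau_h\K_\eps;z;\eps},
\end{equation}
and then Assumption~\ref{a:rec} again, now applied on each translated cube $\tau_h\K_\eps$ (which still has diameter at most $2\eps$), bounds the right-hand side by $\|\Pi^\eps\|^{(\eps)}_{\gamma;\overline{\tau_h\K_\eps}}\$f\$_{\gamma;\tau_h\K_\eps;\eps}$; the union of the $\overline{\tau_h\K_\eps}$ over $h\in\CB_\s(0,\eps)$ is contained in the $(1+\eps)$-fattening $\bar\K_\eps^\eps$, giving precisely~\eqref{eq:H}.

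The main subtlety, and the step I would be most careful about, is the bookkeeping of which seminorm index $z$ is used after translating $\K_\eps$ by $h$: Assumption~\ref{ass:differentationonXeps} keeps the base point $z$ fixed while translating the cube, whereas Assumption~\ref{a:rec} is naturally stated with base point inside (or near) the cube, so one must check that the "locally uniformly over $z$" clauses in both assumptions are compatible — i.e.\ that the implicit constants do not blow up as $h$ ranges over $\CB_\s(0,\eps)$. This is where one uses that $\eps\leq 1$ so that all translated cubes stay within a fixed neighbourhood of $\K_\eps$, keeping everything locally uniform; once this is pinned down, the remaining inequalities are the routine chain of bounds sketched above, together with the observation that $\gamma-\s_{g(i)}>0$ is needed only to make $\CD^{\gamma-\s_{g(i)}}_\eps$ a well-behaved space (this is the hypothesis $\gamma>\s_{g(i)}$). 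The difference/continuity statements for a second model, if desired, follow by the same decomposition using Assumption~\ref{a:reccomparedisc} in place of Assumption~\ref{a:rec}, but as the proposition is stated only for a single model I would not pursue them here.
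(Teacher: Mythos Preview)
Your proposal is correct and follows essentially the same approach as the paper: the membership $\sD_i f\in\CD^{\gamma-\s_{g(i)}}_{\alpha-\s_{g(i)},\eps}$ is handled exactly as you describe (the paper simply says ``consequence of the respective definitions''), and for the bound on $H^\eps_i f$ the paper uses the same decomposition $H^\eps_i f = (\CR^\eps\sD_i f-\Pi^\eps_z\sD_i f(z)) + (\Pi^\eps_z\sD_i f(z)-\mathfrak{D}^\eps_i\CR^\eps f)$, bounding the second bracket via compatibility, Assumption~\ref{ass:differentationonXeps}, and Assumption~\ref{a:rec}, and the first via Assumption~\ref{a:rec} applied to $\sD_i f$. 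Your extra care about the base point $z$ versus the translated cubes $\tau_h\K_\eps$ is a valid point that the paper leaves implicit in its ``locally uniformly over $z$'' clauses.
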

\begin{proof}
The first property is a consequence of the respective definitions.
To see~\eqref{eq:H}, note that for any compact set $\K_\eps$ and any $z\in\K_\eps$, by Assumptions~\ref{a:rec}, \ref{ass:differentationonXeps} and the fact that the model is compatible with $\sD$, 
\begin{equation}
\begin{aligned}
\|\Pi_z^{\eps} \sD_i f(z)- \mathfrak{D}_{i}^{\eps} \CR^{\eps} f\|_{\gamma-\s_{g(i)};\K_\eps;z;\eps}
&=\|\mathfrak{D}_{i}^\eps (\Pi_z^\eps f(z)-\CR^\eps f)\|_{\gamma-\s_{g(i)};\K_\eps;z;\eps}\\
&\lesssim \sup_{h\in\CB_\s(0,\eps)}\|\Pi_z^\eps f(z)-\CR^\eps f\|_{\gamma;\tau_h\K_{\eps};z;\eps}\\
&\lesssim \|\Pi^\eps\|_{\gamma;\bar\K_\eps^{\eps}}^{(\eps)}\sup_{h\in\CB_\s(0,\eps)}\$ f\$_{\gamma;\tau_h\K_{\eps};\eps}.
\end{aligned}
\end{equation}
The claim now follows from this chain of inequalities and Assumption~\ref{a:rec}.
\end{proof}
\begin{remark}
\label{rem:examplederivative}
To illustrate the above definitions, consider the polynomial regularity structure $\bar\TT$ and assume that $\CX_\eps=\R^{\Lambda_\eps}$, where $\Lambda_\eps\subset\R$ is a graph of degree two, i.e., each vertex has two neighbours. Assume that the distance between two vertices is between $\eps$ and $2\eps$. We moreover assume that the action of the model $(\Pi^\eps,\Gamma^\eps)$ on the monomials $X^k$ are given via
$(\Pi_z^\eps X^k)(y)= (y-z)^k$ and $\Gamma_{yz}^\eps X^k= (X+ (y-z))^k$.
A natural candidate for $\mathfrak{D}^\eps$ is given by
\begin{equation}
\label{eq:finitedifference}
(\mathfrak{D}^\eps f)(z)= \frac{f(z+\eps_z)-f(z)}{\eps_z},
\end{equation}
where $\eps_z$ denotes the distance of $z$ from its neighbour to the right.
To enforce the compatibility condition~\eqref{eq:compatibility} one \emph{could} define
 $\sD$ via
\begin{equation}
\sD X^k =  \frac{1}{\eps_z}[(X+\eps_z)^k- X^k]
= \sum_{\ell=0}^{k-1}{ k \choose l} X^{\ell}\eps_z^{k-\ell-1}.
\end{equation}
This choice is motivated by the discrete product rule $(\mathfrak{D}^\eps fg)(z)= (\mathfrak{D}^\eps f)(z)g(z)+(\mathfrak{D}^\eps g)(z)f(z)+
\eps_z(\mathfrak{D}^\eps f)(z)(\mathfrak{D}^\eps g)(z)$. The problem with the above definition however is, that it makes the structural object
$\sD$ dependent on $\eps_z$. A way to circumvent this is to introduce a new symbol $\mathcal{E}$, having homogeneity $\s$  as in \cite{KPZJeremy}, and such that additionally for every $\tau\in\bar\TT$ the symbol $\CE\tau$ has homogeneity $\s+|\tau|$ (in the multidimensional case one would introduce symbols $\CE_{i}$ with homogeneity $\s_{g(i)}$). One can then define
\begin{equation}
\sD X^k 
=\sum_{\ell=0}^{k-1}{ k \choose l} X^{\ell}\mathcal{E}^{k-\ell-1}
\end{equation}
and letting the action of $\Pi^\eps$ on $\mathcal{E}^{k}$ be given by $\Pi_z^{\eps}X^k\CE^{\ell}= \eps_z^{\ell}\Pi_z^{\eps}X^k$
yields that the model $(\Pi^\eps,\Gamma^\eps)$ is compatible with $\sD$.
\end{remark}
\begin{remark}
Assume that $(\Pi^\eps,\Gamma^\eps)$ is a model that is compatible with $\sD$ and that $\Pi_z^\eps$ is injective on $\CT$ for every $z\in\R^d$. In this case the identity $\sD_i\Gamma=\Gamma\sD_i$ is automatic. Indeed, one has
\begin{equs}
\Pi_z^\eps\Gamma_{zy}^\eps \sD_i a &= \Pi_y^\eps \sD_i a
= \mathfrak{D}_{i}^\eps \Pi_y^\eps a\\ &= \mathfrak{D}_{i}^\eps \Pi_z^\eps\Gamma_{zy}^\eps a
= \Pi_z^\eps \sD_i \Gamma_{zy}^\eps a\;,
\end{equs} 
so that $\Gamma_{zy}^\eps \sD_i= \sD_i \Gamma_{zy}^\eps$ as desired.
\end{remark}
\begin{remark}
Let $(\Pi^\eps,\Gamma^\eps)$ be a discrete model, and let $g$ be the identity, further let $\mathfrak{D}_i^\eps$ be a finite difference approximation of the usual gradient in direction $i$ (as for instance in Remark~\ref{rem:examplederivative}) and assume that $\sD$ is compatible with $(\Pi^\eps,\Gamma^\eps)$.
Given a scaled test function $\varphi_z^{\lambda}$ with $\lambda\geq\eps$, integration by parts typically yields
for any $a\in\CT$,
\begin{equation}
(\Pi_z^{\eps}\sD_i a)(\varphi_z^{\lambda})
= (\mathfrak{D}_i^{\eps}\Pi_z^{\eps}a)(\varphi_z^{\lambda})
= -(\Pi_z^{\eps} a)(\mathfrak{D}_i^{\eps} \varphi_z^{\lambda}).
\end{equation}
Note that $\mathfrak{D}_i^{\eps} \varphi_z^{\lambda}$ often behaves like a rescaled version of $\varphi_z^{\lambda}$, so that analytical bounds on $\mathfrak{D}_ia$ are implied by analytical bounds on $a$. This is analogous to the continuous case.
\end{remark}
\begin{remark}
In the transparent case $g$ is the identity and \eqref{eq:H} already implies that $H\equiv 0$. Hence, in this case one has the identity $\CR^\eps \sD_i= \mathfrak{D}_i^\eps\CR^\eps$ (and $\mathfrak{D}_i^\eps$ coincides with the distributional derivative $D_i$).
In all other three examples mentioned in Section~\ref{S2}, this identity is not necessarily true. This is of course not very surprising since all objects are only described up to some error term.
\end{remark}

\section{A fixed point Theorem}
\label{S6}
The goal of this section is to establish a fixed point theorem for the discrete analogue of the setting 
in~\cite{Regularity}. More precisely, in order to avoid the problem of having to control the behaviour 
of functions at infinity, we assume that there is a discrete subgroup $\mathscr{S}$ of the group of 
isometries of $\R^{d-1}$ acting on $\R^d$ via $T_g(t,x)= (t,T_g x)$ for all $g\in\mathscr{S}$. Here, 
$t\in\R$ and $x\in \R^{d-1}$ and we denote points in $\R^d$ either by $(t,x)$ or by $z$.
A further assumption we make is that the fundamental domain of $\mathscr{S}$ is compact.
Moreover, we assume that $\mathscr{S}$ acts on our regularity structures and that all models we are considering are adapted to it in the sense of~\cite[Sections 3.6 and 5.3]{Regularity} (think for instance of $\mathscr{S}$ restricting the space variable to the torus).

We note at this point that following the approach of Hairer and Labb\'e~\cite{HairerLabbe} one could probably 
also deal with non-compact situations. However, to keep this exposition technically less involved we refrain 
from elaborating more on it.
One feature of SPDEs is that they come with boundary data, which in our context will typically given by an initial condition. Depending on the data the solution to the SPDE at hand may have a singularity for small times. To deal with
these situations we let the hyperplane $P$, introduced in Section~\ref{S3.1}, be given by the ``time $0$''-hyperplane, i.e.
\begin{equation}
\label{eq:time0}
P=\{(t,x):\, t=0\}.
\end{equation}
Further, we let $R^{+}:\R\times\R^{d-1}\to\R$ be the indicator function of $\{(t,x)\,:\, t \ge 0\}$.
We assume that the map $f\mapsto R^{+}f$ is bounded from $\CD^{\gamma,\eta}_\eps$ to $\CD^{\gamma,\eta}_\eps$, uniformly
over $\eps$, for any choice of $\gamma$ and $\eta$. This assumption is satisfied for all four examples mentioned in Section~\ref{S2}.
Given $T\in\R$, we set $O=[-1,3]\times\R^{d-1}$ and $O_T=(-\infty,T]\times\R^{d-1}$, and 
we use $\$\cdot\$_{\gamma,\eta;T}^{(\eps)}$ as a shorthand for $\$\cdot\$_{\gamma,\eta;O_T}^{(\eps)}$.
For many concrete SPDEs it may not be possible to decompose the Green's function $G^\eps$ of the linear part of the equation such that it satisfies the assumptions of Section~\ref{S4}. However, it is often possible to write $G^{\eps}=K^{\eps}+R^{\eps}$, where for some $\beta>0$, $K^\eps$ satisfies all assumptions of Section~\ref{S4} and $R^\eps$  is a smooth, compactly supported remainder. The same remark holds for the Green's function $G$ corresponding to the linear part of an equation defined on $\R^d$, i.e., we write $G$ as $G=K+R$, where $K$ satisfies all assumptions of Section~\ref{S4.1}.
We make the following assumption.
\begin{assumption}
\label{ass:R}
Fix two discrete models $Z^\eps=(\Pi^\eps,\Gamma^\eps)$ and $\bar{Z}^\eps=(\bar{\Pi}^\eps,\bar{\Gamma}^\eps)$, and $\eta>-\s_1$. Let $V$ be a sector of regularity $\alpha>-\s_1$. We assume that the operator $R^\eps:\CX_\eps\to\CX_\eps$ alluded to above can be lifted to an operator $R_{\gamma}^\eps:\CX_\eps\to \CD^{\gamma}_\eps(\Gamma^\eps)$ for any $\gamma>0$. We moreover assume that for every $T\in (0,1]$, every $\gamma>0$ and every $f^\eps\in\CD^{\gamma,\eta}_\eps(\Gamma^\eps)$,
\begin{equation}
\CR^\eps R_{\gamma}^\eps= R^\eps,
\end{equation} 
and
\begin{equation}
\begin{aligned}
&\$R_{\gamma+\beta}^\eps \CR^{\eps}R^{+} f^\eps\$_{\gamma+\beta,\bar{\eta};T}^{(\eps)}\lesssim T\$f^\eps\$_{\gamma,\eta;T}^{(\eps)},\\
\end{aligned}
\end{equation}
and 
\begin{equation}
\begin{aligned}
\$R_{\gamma+\beta}^\eps\CR^{\eps}R^{+}f^\eps;R_{\gamma+\beta}^\eps\bar{\CR^\eps}R^{+}\bar{f}^\eps\$_{\gamma+\beta,\bar{\eta};T}^{(\eps)}
\lesssim T(\$f^\eps;\bar{f}^\eps\$_{\gamma,\eta;T}^{(\eps)}+ \$Z^\eps;\bar{Z}^\eps\$_{\gamma;O}^{(\eps)}).
\end{aligned}
\end{equation}
The proportionality constant in the first bound is allowed to depend only on $\$Z^\eps\$_{\gamma;O}^{(\eps)}$ whereas it is allowed to depend also on $\$f^\eps\$_{\gamma,\eta;T}^{(\eps)}$ and $\$\bar{f}^\eps\$_{\gamma,\eta;T}^{(\eps)}$ in the second bound.
Here, $\bar{\CR^{\eps}}$ is the reconstruction operator corresponding to the model $\bar{Z}^\eps$, $\bar{f}^\eps\in\CD^{\gamma,\eta}_\eps(\bar\Gamma^\eps)$ and $\bar{\eta}=\alpha\wedge\eta+\beta-\kappa$ for some $\kappa>0$.
Let $Z=(\Pi,\Gamma)$ be a continuous model. In the same setting as above we assume that there is an operator $R:\CD'(\R^d)\to \CD'(\R^d)$ that can be lifted to an operator $R_{\gamma}:\CD'(\R^d)\to\CD^{\gamma}(\Gamma)$ for any $\gamma>0$ such that there is a constant $C(\eps)>0$ with
\begin{equation}
\begin{aligned}
\$R_{\gamma+\beta}\CR R^{+}f;&R_{\gamma+\beta}^\eps \CR^\eps R^{+} f^{\eps}\$_{\gamma+\beta,\bar{\eta};T;\geq\eps}\\
&\lesssim T\Big(\$Z;Z^\eps\$_{\gamma;O}+ \$f;f^\eps\$_{\gamma,\eta;T} + C(\eps)\$Z\$_{\gamma;O}\Big),
\end{aligned}
\end{equation}
where $f\in\CD^{\gamma,\eta}(Z)$.
Here, the proportionality constant is allowed to depend on the bounds of the models and on $\$f\$_{\gamma,\eta;T}$ and $\$f^\eps\$_{\gamma,\eta;T}^{(\eps)}$.
\end{assumption}
A further assumption that we need is that $K_n^\eps$ and $T_{n,\gamma+\beta}^{\eps}$ defined in Section~\ref{S4} are non-anticipative in the sense that the test functions $\varphi_z^n$ appearing in Equations~\eqref{eq:Tcomponentest}--\eqref{eq:relationKandT} have support contained in
$\{\bar{z}= (\bar{t},\bar{x})\in\R^d:\, \|\bar{z}-z\|_\s\leq 2^{-n},\, \bar{t}\leq t\}$ and ($\CK_{\gamma}^{\eps} f)(t,x)$
depends only on those values $(\bar{t},x)$ such that $\bar{t}\leq t$.
Going carefully through the proofs of Theorems~\ref{thm:Schauder} and~\ref{thm:Schauderweighted} we see that as a consequence of that extra assumption
\begin{equs}[eq:estKwithkapp]
\$\CK_{\gamma}^{\eps} f^\eps\$_{\gamma+\beta,\bar{\eta}; T}^{(\eps)}&\lesssim \$f^\eps\$_{\gamma,\eta; T}^{(\eps)},\\
\$\CK_{\gamma}^{\eps} f^\eps;\bar{\CK}_\gamma^\eps\bar{f}^\eps\$_{\gamma+\beta,\bar{\eta};T}^{(\eps)}
&\lesssim \$f^\eps;\bar{f}^\eps\$_{\gamma,\eta; T}^{(\eps)}+ \|\Pi^{\eps}-\bar{\Pi}^\eps\|_{\gamma; O}^{(\eps)}.
\end{equs}
Here $\bar{\eta}$ is chosen as in Assumption~\ref{ass:R}. 
We assume the same for $K_n$ and $T_{n,\gamma+\beta}$, then~\cite[Thm~7.1]{Regularity} shows the analogue of~\eqref{eq:estKwithkapp} in the case of continuous convolution operators.
\begin{lemma}
\label{lem:Kgammakappa}
With the same notation as in Assumption~\ref{ass:R}, assume that
\begin{equs}
\label{eq:aKgammakappa}
\$\CK_{\gamma}^{\eps} R^{+}f^\eps\$_{\gamma+\beta,\bar{\eta};T;\eps}&\lesssim
T^{\kappa/\s_1}\$ f^\eps\$_{\gamma,\eta;T}^{(\eps)},\\
\$\CK_{\gamma}^{\eps} R^{+}f^\eps;\bar{\CK}_\gamma^\eps R^{+}\bar{f}^\eps\$_{\gamma+\beta,\bar{\eta};T;\eps}&\lesssim
T^{\kappa/\s_1}\big(\$ f^\eps;\bar{f}^\eps\$_{\gamma,\eta;T}^{(\eps)}+ \$Z^\eps;\bar{Z}^\eps\$_{\gamma;O}^{(\eps)}),
\end{equs}
where the proportionality constant in the first bound is allowed to depend only on $\$Z^\eps\$_{\gamma;O}^{(\eps)}$, while  the proportionality constant in the second bound is allowed to depend also on $\$f^\eps\$_{\gamma,\eta;T}^{(\eps)}$ and $\$\bar{f}^\eps\$_{\gamma,\eta;T}^{(\eps)}$.
Then, 
\begin{equs}
\$\CK_{\gamma}^{\eps} R^{+}f^\eps\$_{\gamma+\beta,\bar{\eta};T}^{(\eps)}&\lesssim
T^{\kappa/\s_1}\$ f^\eps\$_{\gamma,\eta;T}^{(\eps)},\\
\$\CK_{\gamma}^{\eps} R^{+}f^\eps;\bar{\CK}_\gamma^\eps R^{+}\bar{f}^\eps\$_{\gamma+\beta,\bar{\eta};T}^{(\eps)}&\lesssim
T^{\kappa/\s_1}\big(\$ f^\eps;\bar{f}^\eps\$_{\gamma,\eta;T}^{(\eps)}+ \$Z^\eps;\bar{Z}^\eps\$_{\gamma;O}^{(\eps)}\big),
\end{equs}
and
\begin{equation}
\label{eq:Kgammakappacontdisc}
\begin{aligned}
\$\CK_\gamma R^{+}f;&\CK_{\gamma}^{\eps}R^{+}f^\eps\$_{\gamma+\beta,\bar{\eta};T;\geq \eps}\\
&\lesssim T^{\kappa/\s_1}\big(\$ f;f^\eps\$_{\gamma,\eta;T;\geq\eps}+ \$Z;Z^\eps\$_{\gamma;O}
+C(\eps)\$Z\$_{\gamma;O}\big).
\end{aligned}
\end{equation}
Here, the proportionality constants are analogue to the ones is~\eqref{eq:aKgammakappa} and $C(\eps)$ is as in Theorem~\ref{thm:Schaudercontdisc}.
\end{lemma}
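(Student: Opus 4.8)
The plan is to reduce Lemma~\ref{lem:Kgammakappa} to the estimates~\eqref{eq:aKgammakappa} assumed on small scales, together with the bounds~\eqref{eq:estKwithkapp} and the Schauder estimates of Theorems~\ref{thm:Schauder},~\ref{thm:Schauderweighted} and~\ref{thm:Schaudercontdisc} already established. The point is that~\eqref{eq:aKgammakappa} controls $\CK_{\gamma}^{\eps} R^+ f^\eps$ only in the small-scale seminorm $\$\cdot\$_{\gamma+\beta,\bar\eta;T;\eps}$, whereas we want the full norm $\$\cdot\$_{\gamma+\beta,\bar\eta;T}^{(\eps)}$, which also contains the large-scale pieces
\[
\sup_{\substack{z\in O_T\setminus P\\ \|z\|_P\geq \eps}}\sup_{\beta'<\bar\Gamma}\frac{\|\CK_\gamma^\eps R^+f^\eps(z)\|_{\beta'}}{\|z\|_P^{(\bar\eta-\beta')\wedge 0}}
\quad\text{and}\quad
\sup_{\substack{(y,z)\in (O_T)_P\\ \eps\leq\|y-z\|_\s\leq 1}}\sup_{\beta'<\bar\Gamma}\frac{\|\CK_\gamma^\eps R^+f^\eps(z)-\Gamma_{zy}^\eps\CK_\gamma^\eps R^+f^\eps(y)\|_{\beta'}}{\|y-z\|_\s^{\bar\Gamma-\beta'}\|y,z\|_P^{\bar\eta-\bar\Gamma}}.
\]

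First I would observe that the non-anticipativity assumption on $K_n^\eps$ and $T_{n,\gamma+\beta}^\eps$, inserted into the proofs of Theorems~\ref{thm:Schauder} and~\ref{thm:Schauderweighted}, yields~\eqref{eq:estKwithkapp}, i.e.\ the large-scale part of $\$\CK_\gamma^\eps R^+ f^\eps\$_{\gamma+\beta,\bar\eta;T}^{(\eps)}$ is bounded by $\$R^+f^\eps\$_{\gamma,\eta;T}^{(\eps)}\lesssim \$f^\eps\$_{\gamma,\eta;T}^{(\eps)}$ (using that $f\mapsto R^+f$ is bounded on $\CD^{\gamma,\eta}_\eps$ uniformly in $\eps$). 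This already gives the right bound but without the gain $T^{\kappa/\s_1}$. The gain on large scales comes exactly as in~\cite[Thm~7.1]{Regularity}: since every test function $\varphi_z^n$ entering the definition of $\CK_\gamma^\eps$ is supported in $\{\bar t\le t\}$, and $R^+f^\eps$ vanishes for $\bar t<0$, the relevant integrals only see a time-slab of width $\lesssim T$, which produces a factor $T^{\kappa/\s_1}$ after splitting scales at $2^{-n}\sim T^{1/\s_1}$; for scales coarser than $T^{1/\s_1}$ one uses the crude bound, for finer scales the $T$-independent bound, and optimising over the split yields $T^{\kappa/\s_1}$. Combining this large-scale gain with the assumed small-scale gain~\eqref{eq:aKgammakappa} gives the first two displays of the lemma. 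The difference estimate is handled in the same way, carrying along the extra term $\|\Pi^\eps-\bar\Pi^\eps\|_{\gamma;O}^{(\eps)}$ from~\eqref{eq:estKwithkapp} respectively $\$Z;\bar Z\$_{\gamma;O}^{(\eps)}$ from~\eqref{eq:aKgammakappa}, and using bilinearity of $\CK_\gamma^\eps$ in $(f,\Pi^\eps)$ to telescope.

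For~\eqref{eq:Kgammakappacontdisc}, the continuous-versus-discrete comparison, I would run the same time-slab argument inside the proof of Theorem~\ref{thm:Schaudercontdisc}: the quantity $\CK_\gamma R^+f-\iota_\eps\CK_\gamma^\eps R^+f^\eps$ on scales $\ge\eps$ is estimated by the three-term bound of~\eqref{eq:Schaudercontdisc}, and each term — $\|\Pi;\Pi^\eps\|_{\gamma;\dbar O}\$R^+f\$$, $\|\Pi^\eps\|^{(\eps)}\$R^+f;R^+f^\eps\$$, and $C(\eps)\|\Pi\|\$R^+f\$$ — acquires a factor $T^{\kappa/\s_1}$ for exactly the same reason (support of the kernels in $\{\bar t\le t\}$ together with $R^+f$, $R^+f^\eps$ supported in $\{\bar t\ge 0\}$), provided one also invokes~\cite[Thm~7.1]{Regularity} for the purely continuous piece $\CK_\gamma R^+f$. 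The leftover small-scale term $\$\CK_\gamma^\eps R^+f^\eps\$_{\gamma+\beta,\bar\eta;T;\eps}$ and the ``below $\eps$'' term for $\CK_\gamma R^+f$ appearing in Theorem~\ref{thm:Schaudercontdisc} are absorbed using the first two displays of the present lemma and~\cite[Thm~7.1]{Regularity} respectively, both of which already carry the factor $T^{\kappa/\s_1}$. Boundedness of $R^+$ uniformly in $\eps$ is used throughout to pass from $\$R^+f^\eps\$$ to $\$f^\eps\$$ etc.

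The main obstacle is the bookkeeping in the time-slab/scale-splitting argument: one has to verify that in \emph{each} of the terms $\I_n,\II_n,\III_n$ (and their primed/weighted variants) appearing in the proofs of Theorems~\ref{thm:Schauder} and~\ref{thm:Schauderweighted}, the supremum of $|\iota_\eps(\Pi_z^\eps[\ldots])(\varphi_z^n)|$ can be replaced, when $R^+ f^\eps$ is plugged in, by the same quantity restricted to $z$ within distance $\lesssim T^{1/\s_1}$ of $P$, so that summing over $n$ with $2^{-n}\gtrsim T^{1/\s_1}$ is harmless and summing over $n$ with $2^{-n}\lesssim T^{1/\s_1}$ produces $T^{\kappa/\s_1}$; this is precisely the computation carried out in~\cite[Sec.~7]{Regularity} and I would simply indicate that it goes through verbatim, rather than reproduce it. A secondary point needing care is the exact exponent: one must check that $\kappa<\beta$ (so that $\bar\eta=\alpha\wedge\eta+\beta-\kappa$ still satisfies $\bar\eta>\alpha\wedge\eta>-\mathfrak m$ and the Schauder hypotheses apply) and that $\kappa/\s_1$ is the optimal power coming from the split — this is inherited from Assumption~\ref{ass:R} and from the hypothesis~\eqref{eq:aKgammakappa}, so no new constraint arises.
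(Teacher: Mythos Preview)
Your sketch misidentifies the mechanism for the $T^{\kappa/\s_1}$ gain and, more importantly, overlooks the one place where the discrete argument does \emph{not} go through verbatim. In \cite[Thm~7.1]{Regularity} the gain does not arise from splitting the kernel index at $2^{-n}\sim T^{1/\s_1}$; it arises because $\CK_\gamma R^+ f$ vanishes identically for $t<0$, so the dyadic telescoping toward $P$ of \cite[Lem.~6.5]{Regularity} upgrades the pointwise bound to $\|(\CK_\gamma R^+f)(z)\|_{|k|_\s}\lesssim \|z\|_P^{(\alpha\wedge\eta)+\beta-|k|_\s}$ for \emph{all} $|k|_\s<\gamma+\beta$, and then $\|z\|_P\le T^{1/\s_1}$ on $O_T$ produces $T^{\kappa/\s_1}$ when one passes to the weaker exponent $\bar\eta=(\alpha\wedge\eta)+\beta-\kappa$. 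The paper follows exactly this route, by reverse induction on $|k|_\s$ along the sequence $z_n=(2^{-n}t,x)$.

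The genuine difficulty in the discrete setting is that $\CK_\gamma^\eps R^+ f^\eps$ does \emph{not} vanish for $t<0$ (non-anticipativity only holds up to scale $\eps$), so the telescoping cannot be run all the way to $P$: it terminates when $\|z_n\|_P\sim\eps$, and one must separately establish the seed estimate $\|\CK_\gamma^\eps R^+ f^\eps(\bar z)\|_{|k|_\s}\lesssim \eps^{(\alpha\wedge\eta)+\beta-|k|_\s}$ for $\|\bar z\|_P\lesssim\eps$. The paper does this by first proving $(T_\gamma^\eps\CR^\eps R^+ f^\eps)(t,x)=0$ for $t\le-\bfc\eps$ (via a partition of unity at scale $\eps$ together with Theorem~\ref{thm:recwithweights}), and then comparing $(T_{n,\gamma}^\eps\CR^\eps R^+ f^\eps)(\bar z)$ with its $\Gamma^\eps$-translate at a nearby point with $t=-\bfc\eps$ using item~3 of Assumption~\ref{a:TandK}. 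Your claim that the argument ``goes through verbatim'' is precisely where it does not; this scale-$\eps$ seed is the main content of the proof and is absent from your sketch.
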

\begin{proof}
The proof uses some ideas of~\cite[Proof of Lemma 6.5]{Regularity}. We therefore only sketch some of the arguments and provide details only for those ingredients that are new. 
We first note that as a consequence of Theorem~\ref{thm:Schauderweighted} and the definition of the $\CD_{\eps}^{\gamma,\eta}$-norm we only need to control components $k$ such that $|k|_\s<\alpha\wedge \eta+\beta$.
We define a sequence $z_n =S_P^{2^{-n}}z$, where 
$S_P^{2^{-n}}z= (2^{-n}t,x_1,\ldots, x_{d-1})$. Then,
$\|z_{n+1}-z_n\|_\s = \|z_{n+1}\|_P$, provided that $d_\s(z_{n+1},P)\leq 1$ (which is satisfied for $n$ large enough).
One then proceeds via reverse induction. More precisely, we assume that there is a multiindex $k$ such that that
\begin{equation}
\|\CK_{\gamma}^{\eps} R^{+} f(z)\|_m\lesssim \|z\|_P^{(\alpha\wedge \eta)+\beta-m},\quad \mbox{for all }m>|k|_\s.
\end{equation}
Since $\CK_{\gamma}^{\eps} R^{+}f\in\CD^{\gamma+\beta,\alpha\wedge\eta+\beta}_\eps$, this is certainly the case for $k\in A$ such that $|k|_\s$ is smaller than $\alpha\wedge\eta+\beta$.
Provided that $\|z_{n+1}-z_n\|_\s\geq \eps$, one may then show as in~\cite{Regularity} that 
\begin{equation}
\|\CK_{\gamma}^{\eps} \CR^\eps R^{+} f(z_{n+1})-\CK_{\gamma}^{\eps} \CR^\eps R^{+} f(z_n)\|_{|k|_\s} \lesssim 2^{-n(\alpha\wedge\eta+\beta-|k|_\s)}.
\end{equation}
Assume now that for any fixed $\bar{c}>0$ and $\bar{z}\in\R^d$ such that $\|\bar{z}\|_P\leq \bar{c}\eps$ we have the estimate
\begin{equation}
\label{eq:smallzest}
\|\CK_{\gamma}^{\eps} R^{+}f(\bar{z})\|_{|k|_\s}\lesssim \eps^{\eta\wedge\alpha+\beta-|k|_\s}\quad\mbox{for }|k|_\s<\eta\wedge\alpha +\beta.
\end{equation} 
In this case a teleskop sum argument as in~\cite[Lemma 6.5]{Regularity} yields the claim. It remains to establish~\eqref{eq:smallzest}. 
To that end we first intend to show that
\begin{equation}
\label{eq:RRfzero}
(T_{\gamma}^{\eps}\CR^\eps R^{+}f)(t,x)=0,\quad \mbox{if }t\leq -\bfc\eps.
\end{equation}
Here, $\bfc\geq 0$, is the constant introduced in Definition~\ref{def:admissible}.
We first note that by~\eqref{eq:Tcomponentest},
\begin{equation}
\big|\CQ_k((T_{n,\gamma}^{\eps}\CR^\eps R^{+}f)(t,x))\big|
\leq 2^{n(|k|_\s-\beta)}
\sup_{\varphi_{(t,x)}^{n}}\big|(\iota_\eps \CR^\eps R^{+}f)(\varphi_{(t,x)}^n)\big|,
\end{equation}
for all multiindex $k$ and any $n$.
By our additional assumption of non-anticipativity, we see that the support of each $\varphi_{(t,x)}^{n}$ is contained in the set
\begin{equation}
\{(\bar{t},\bar{x}):\, \bar{t}\leq t, \|(\bar{t},\bar{x})-(t,x)\|_\s\leq 2^{-n}\}.
\end{equation}
In what follows we write $\varphi_z^n$ instead of $\varphi_{(t,x)}^n$. To proceed we fix a smooth function $\Psi$ as in the proof of Theorem~\ref{thm:reconstruction}. 
Let $\bar{z}\in\Lambda_N^{\s}$ (recall that $\Lambda_N^{\s}$ denotes the $d$-dimensional grid with mesh $2^{-N}$, see Equation~\eqref{eq:lambda}), we define a function $\Psi_{\bar{z}}$ on $\R^d$ via
\begin{equation}
\Psi_{\bar{z}}(z)=\prod_{i=1}^{d}\Psi(2^{N\s_i}(z_i-\bar{z}_i)).
\end{equation}
As a consequence of~\eqref{eq:summingtoone} we can write
\begin{equation}
\varphi_z^{n}=\sum_{\bar{z}\in\Lambda_N^{\s}}\varphi_z^{n}\Psi_{\bar{z}},
\end{equation}
and we see that $\chi_{z,\bar{z}}^n:= 2^{(N-n)|\s|}\varphi_z^n\Psi_{\bar{z}}$ is a test function with support of size $\eps$.  Note that by~\eqref{eq:recwithweights3},
\begin{equation}
|\iota_\eps(\CR^\eps R^{+} f)(\chi_{z,\bar{z}}^n)|
\lesssim 2^{-n\alpha\wedge\eta}\|\Pi^\eps\|_{\gamma;O}^{(\eps)}\$R^{+}f\$_{\gamma,\eta;[\chi_{z,\bar{z}}^n]}^{(\eps)}.
\end{equation}
Thus,~\eqref{eq:RRfzero} is a consequence of the fact that $R^{+}f\equiv 0$ on the $\bfc \eps$-fattening of $[\chi_{z,\bar{z}}^n]$, provided that $t\leq -\bfc \eps$.
To continue, note that
\begin{itemize}
\item $\CI R^{+}f\in \CT_{\alpha\wedge\eta+\beta}^{+}$, and
\item $\CQ_k((T_{n,\zeta}^{\eps}\Pi_z^\eps \CQ_{\zeta} R^{+}f(z))(z))= \CQ_k((T_{n,\gamma}^{\eps}\Pi_z^\eps \CQ_{\zeta} R^{+}f(z))(z))$ for $|k|_\s<\zeta+\beta$ and all $n$ (note that on the left hand side the index of $T^\eps$ is $\zeta$, whereas on  the right hand side it is $\gamma$). 
\end{itemize}
As a consequence of these two items and the definition of $\CK_\gamma^\eps$, the only term we need to estimate to derive~\eqref{eq:smallzest} is
\begin{equation}
\CQ_k((T_{n,\gamma}^{\eps}\CR^{\eps}R^{+}f)(\bar{z})).
\end{equation}
Fix $z=(t,\bar x)\in\R^d$ such that $t=-\bfc\eps$. Note that $z$ is chosen such that the spatial coordinates of $z$ and $\bar z$ coincide, in particular $\|z-\bar z\|_\s \approx \eps$. Let $n\in\N$ such that $\|z-\bar z\|_\s\leq 2^{-n}$. Then,~\eqref{eq:RRfzero} yields
\begin{equation}
\CQ_k((T_{n,\gamma}^{\eps}\CR^{\eps}R^{+}f)(\bar{z}))
= \CQ_k((T_{n,\gamma}^{\eps}\CR^{\eps}R^{+}f)(\bar{z}))- \CQ_k(\Gamma_{\bar z z}^{\eps}(T_{n,\gamma}^{\eps}\CR^{\eps}R^{+}f)(z)).
\end{equation}
Using~\eqref{eq:Tdifferentest} we see that the above is bounded by
\begin{equation}
2^{n(\lceil \gamma+\beta\rceil-\beta)}\|z-\bar{z}\|_\s^{\lceil\gamma+\beta\rceil-|k|_\s}
\sup_{\varphi_{\tilde{z}}^{n-1}}\big|(\iota_\eps\CR^\eps R^{+}f)(\varphi_{\tilde{z}}^{n-1})\big|.
\end{equation}
Applying Theorem~\ref{thm:recwithweights} and summing over $n$ such that  $\|z-\bar z\|_\s\leq 2^{-n}$ yields an upper bound of the order $\eps^{\alpha\wedge\eta+\beta-|k|_\s}$. If $\|z-\bar z\|_\s >2^{-n}$, then our choice of $z$ and $\bar z$ yield
$2^{-n}\approx\eps$. Thus, we may conclude with~\eqref{eq:Tcomponentest} and Theorem~\ref{thm:recwithweights}.
The bound on $\|\CK_{\gamma}^{\eps} R^{+} f;\bar{\CK}_\gamma^\eps R^{+}\bar{f}\|_{\bar{\Gamma},\bar{\eta};T}^{(\eps)}$ and~\eqref{eq:Kgammakappacontdisc} follow
in a similar manner. 
\end{proof}
Recall that we only have the identity $\CR^\eps \CK_{\gamma}^{\eps}= K^\eps\CR^\eps + A^\eps f$, where $A^\eps$ is an operator that can be controlled on small scales.  We saw in Remark~\ref{rem:A} that typically one is able to lift $A^\eps$ to an operator $\CA^\eps:\CD^{\gamma}_\eps\to \CD^{\gamma+\beta}_\eps$. The precise form of $\CA^\eps$ in~\eqref{eq:CAdef} and the arguments in the proof of Lemma~\ref{lem:Kgammakappa} show that the following assumption is natural.
\begin{assumption}
\label{ass:Aeps}
We assume that there is an operator $\CA^\eps:\CD^{\gamma}_\eps\to \CD^{\gamma+\beta}_\eps$ taking values only in the polynomial part of the regularity structure such that $\CR^\eps \CA^\eps =A^\eps$. Moreover, in the setting of Assumption~\ref{ass:R} we have the estimates
\begin{equs}
\$\CA^\eps R^{+}f^\eps\$_{\gamma+\beta,\bar{\eta};T}^{(\eps)}&\lesssim
T^{\kappa/\s_1}\$ f^\eps\$_{\gamma,\eta;T}^{(\eps)},\\
\$\CA^\eps R^{+}f^\eps;\bar{\CA}^\eps R^{+}\bar{f}^\eps\$_{\gamma+\beta,\bar{\eta};T}^{(\eps)}&\lesssim
T^{\kappa/\s_1}\big(\$ f^\eps;\bar{f}^\eps\$_{\gamma,\eta;T}^{(\eps)}+ \$Z^\eps;\bar{Z}^\eps\$_{\gamma;O}^{(\eps)}\big).
\end{equs}
The smallest proportonality constant in the first bound is denoted $\$\CA^\eps\$_{\gamma+\beta,\bar{\eta};T}^{(\eps)}$
and is allowed to depend only on $\|Z^\eps\|_{\gamma;O}^{(\eps)}$.
The proportionality constant in the second bound is only allowed to depend on the norm of the models, on the norms of $\CA^\eps$ and $\bar{\CA}^\eps$ and on $\$f^\eps\$_{\gamma,\eta;T}^{(\eps)}, \$\bar{f}^\eps\$_{\gamma,\eta;T}^{(\eps)}$.
\end{assumption} 
Before we can finally state the fixed point theorem we are after, we need to introduce more setup. 
Let $\gamma\geq\bar{\gamma} >0$, $F:\R^d\times \CT_{\gamma}\to \CT_{\bar{\gamma}}$ and $f:\R^d\to \CT_{\gamma}$.
We define
\begin{equation}
(F(f))(z)= F(z,f(z)).
\end{equation}
Fix $R>0$ and assume that $F$ maps $\CD^{\gamma,\eta}_{P,\eps}$ into $D^{\bar{\gamma},\bar{\eta}}_{P,\eps}$ for some $\eta,\bar\eta\in\R$. We say that $F$ is strongly locally Lipschitz continuous if
\begin{equation}
\label{eq:stronglip}
\$F(f);F(g)\$_{\bar{\gamma},\bar{\eta};\K}^{(\eps)}\lesssim (\$f;g\$_{\gamma,\eta;\K}^{(\eps)}+\$Z^\eps;\bar{Z}^\eps\$_{\gamma;\bar{\K}}^{(\eps)}).
\end{equation}
Here, $Z^\eps$ and $\bar{Z}^\eps$ and $f\in \CD^{\gamma,\eta}_{\eps}(Z^\eps)$, $g\in\CD^{\gamma,\eta}_{\eps}(\bar{Z}^\eps)$
are such that $\$Z^\eps\$_{\gamma;\bar\K}^{(\eps)} + \$\bar{Z}^\eps\$_{\gamma;\bar\K}^{(\eps)}\leq R$ and
$\$f\$_{\gamma,\eta;\K}^{(\eps)}+ \$g\$_{\gamma,\eta;\K}^{(\eps)}\leq R$.
We have the following result.
\begin{theorem}
\label{thm:fixedpointthm}
Let $V,\bar{V}$ be two sectors of a regularity structure  $\TT$ with regularities $\zeta,\bar{\zeta}\in\R$ with 
$\zeta\leq \bar{\zeta}+\beta$. Under all the assumptions stated in this section and in the setup described above, for some $\gamma\geq\bar\gamma>0$ and some $\eta\in\R$, let $F:\R^d\times V_{\gamma}\to\bar{V}_{\bar{\gamma}}$ be a smooth function such that, if $f\in\CD^{\gamma,\eta}_{\eps}$ is symmetric with respect to the action $\mathscr{S}$, then $F(f)$ defined above belongs to $\CD^{\bar{\gamma},\bar{\eta}}_{\eps}$ and is also symmetric with respect to $\mathscr{S}$.
Moreover, assume that there is an abstract integration map $\CI$ such that $\CQ_{\gamma}^{-1}\CI\bar{V}_{\bar{\gamma}}\subset V_{\gamma}$.
If $\eta< (\bar\eta\wedge \bar\zeta)+\beta$, $\gamma <\bar\gamma+\beta$, $(\bar\eta\wedge\bar\zeta)>-\s_1$, and $F$ is strongly locally Lipschitz continuous, then, for every $v\in \CD^{\gamma,\eta}_{\eps}$ which is symmetric with respect to $\mathscr{S}$, and for every symmetric model $Z^\eps=(\Pi^{\eps},\Gamma^{\eps})$ for the regularity structure $\TT$ such that $\CI$ is adapted to the kernel $K^{\eps}$, there exists $T^\eps>0$ such that the equation
\begin{equation}
u^\eps=(\CK_{\bar{\gamma}}^\eps-\CA^{\eps}+R_{\gamma}^\eps\CR^{\eps})R^{+}F(u^\eps)+v^\eps
\end{equation}
admits a unique solution $u^\eps\in\CD^{\gamma,\eta}_{\eps}(Z^\eps)$ on $(0,T^\eps)$. The solution map is $(v^\eps,Z^\eps)\mapsto u^\eps$ jointly Lipschitz continuous. Let $Z=(\Pi,\Gamma)$ be a continuous model and under the same assumptions as above let $u\in\CD^{\gamma,\eta}(Z)$ be the unique solution to the fixed point problem
\begin{equation}
u=(\CK_{\bar{\gamma}}+R_{\gamma}\CR)R^{+}F(u)+v
\end{equation}
on some time interval $(0,T)$. Then,
\begin{equation}
\$u;u^\eps\$_{\gamma,\eta;T^\eps\wedge T;\geq \eps}\lesssim \$Z;Z^\eps\$_{\gamma;O}+ C(\eps)\$Z\$_{\gamma;O}+ \$v;v^\eps\$_{\gamma,\eta;T^\eps\wedge T} +\$\CA^\eps\$_{\gamma,\eta;T^\eps\wedge T}^{(\eps)},
\end{equation}
and the proportionality constant only depends on $R$ defined above Equation~\eqref{eq:stronglip}.
\end{theorem}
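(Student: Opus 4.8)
The proof follows the fixed point argument of \cite[Sec.~7]{Regularity}, with the analytical inputs replaced by the discrete counterparts established above. The plan is to realise $u^\eps$ as the fixed point of the map
\begin{equation*}
\mathcal{M}^\eps(w) = \bigl(\CK_{\bar{\gamma}}^\eps - \CA^\eps + R_\gamma^\eps\CR^\eps\bigr)R^{+}F(w) + v^\eps,
\end{equation*}
acting on a ball of radius $1+2\$v^\eps\$_{\gamma,\eta;T^\eps}^{(\eps)}$ in the space of $\mathscr{S}$-symmetric elements of $\CD^{\gamma,\eta}_\eps(Z^\eps)$ on $O_{T^\eps}$.

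First I would check that $\mathcal{M}^\eps$ maps this ball into itself once $T^\eps$ is small. By Assumption~\ref{ass:composition} (composition with the smooth function $F$, in its singular version), together with the hypothesis that $F$ sends symmetric elements of $\CD^{\gamma,\eta}_\eps$ to symmetric elements of $\CD^{\bar\gamma,\bar\eta}_\eps$ and the boundedness of $R^{+}$, the map $w\mapsto R^{+}F(w)$ sends the ball into a bounded subset of the symmetric part of $\CD^{\bar\gamma,\bar\eta}_\eps(\bar V)$. One then applies in turn: Lemma~\ref{lem:Kgammakappa} to $\CK_{\bar\gamma}^\eps R^{+}F(w)$, Assumption~\ref{ass:Aeps} to $\CA^\eps R^{+}F(w)$, and Assumption~\ref{ass:R} to $R_{\gamma+\beta}^\eps\CR^\eps R^{+}F(w)$. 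Since $\gamma<\bar\gamma+\beta$ and $\eta<(\bar\eta\wedge\bar\zeta)+\beta$, the target space $\CD^{\bar\gamma+\beta,(\bar\eta\wedge\bar\zeta)+\beta}_\eps$ embeds continuously into $\CD^{\gamma,\eta}_\eps$, and each of the three contributions carries a factor $T^{\kappa/\s_1}$ (resp.\ $T$). Choosing $T^\eps$ small --- with a threshold depending only on $R$ and the model bounds, hence \emph{uniformly} in $\eps$ --- makes $\mathcal{M}^\eps$ a self-map of the ball preserving $\mathscr{S}$-symmetry.

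Next I would prove the contraction property, and more generally the Lipschitz estimate. The key inputs are the difference bounds in Assumption~\ref{ass:composition} (strong local Lipschitz continuity of $F$, \eqref{eq:stronglip}) and the difference parts of Lemma~\ref{lem:Kgammakappa}, Assumption~\ref{ass:Aeps}, and Assumption~\ref{ass:R}. For $w,\bar w$ in the ball over the same model $Z^\eps$ these give $\$\mathcal{M}^\eps(w);\mathcal{M}^\eps(\bar w)\$_{\gamma,\eta;T^\eps}^{(\eps)}\lesssim T^{\kappa/\s_1}\$w;\bar w\$_{\gamma,\eta;T^\eps}^{(\eps)}$, so after shrinking $T^\eps$ once more the map is a contraction and the Banach fixed point theorem yields the unique $\mathscr{S}$-symmetric solution $u^\eps\in\CD^{\gamma,\eta}_\eps(Z^\eps)$ on $(0,T^\eps)$. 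Allowing $w,\bar w$ to sit over different models $Z^\eps,\bar Z^\eps$ and data $v^\eps,\bar v^\eps$, the same estimates --- now with the extra terms $\$Z^\eps;\bar Z^\eps\$_{\gamma;O}^{(\eps)}$, $\$v^\eps;\bar v^\eps\$_{\gamma,\eta;T^\eps}^{(\eps)}$ and $\$\CA^\eps\$_{\gamma,\eta;T^\eps}^{(\eps)}$ entering linearly --- combined with the standard stability lemma for contractions with moving fixed points give the joint Lipschitz continuity of $(v^\eps,Z^\eps)\mapsto u^\eps$.

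Finally, for the comparison with the continuous solution $u$, which solves $u=(\CK_{\bar\gamma}+R_\gamma\CR)R^{+}F(u)+v$ on $(0,T)$, I would run the analogous argument on $O_{T^\eps\wedge T}$ for the hybrid distance $\$\cdot;\cdot\$_{\gamma,\eta;\,\cdot\,;\geq\eps}$ of Definition~\ref{def:weightedDgammacontdisc}. One estimates the distance between the two fixed point maps applied to $(u,u^\eps)$ using the continuous--discrete Schauder comparison of Lemma~\ref{lem:Kgammakappa} (estimate~\eqref{eq:Kgammakappacontdisc}, via Theorem~\ref{thm:Schaudercontdisc}), the continuous--discrete part of Assumption~\ref{ass:R}, the strong Lipschitz bound for $F$ read across the two models, and the bound on $\CA^\eps$; these produce exactly the terms $\|Z;Z^\eps\|_{\gamma;O}$, $C(\eps)\|Z\|_{\gamma;O}$, $\$v;v^\eps\$_{\gamma,\eta;T^\eps\wedge T}$ and $\$\CA^\eps\$_{\gamma,\eta;T^\eps\wedge T}^{(\eps)}$ claimed, plus a term $T^{\kappa/\s_1}\$u;u^\eps\$_{\gamma,\eta;T^\eps\wedge T;\geq\eps}$ that is absorbed into the left-hand side after a final (uniform) shrinking of $T^\eps\wedge T$. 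I expect the main obstacle to be precisely this last step: since $u$ and $u^\eps$ solve genuinely different fixed point equations on genuinely different spaces, the usual ``two nearby contractions'' argument has to be reorganised for the distance $\$\cdot;\cdot\$_{\gamma,\eta;\,\cdot\,;\geq\eps}$, which is neither the norm of a difference nor symmetric in its arguments, and one must verify that the self-map and contraction thresholds for $\mathcal{M}^\eps$ are bounded below uniformly in $\eps$ so that $T^\eps$ can be taken bounded below, making the comparison non-vacuous.
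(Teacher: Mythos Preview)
Your proposal is correct and follows essentially the same approach as the paper: the paper's proof simply states that with Assumptions~\ref{ass:R} and~\ref{ass:Aeps} and Lemma~\ref{lem:Kgammakappa} at hand, the argument proceeds exactly as in \cite[Thm~7.8]{Regularity}. You have spelled out precisely that argument, invoking the same three inputs at the same places, so your writeup is in fact a faithful expansion of what the paper leaves implicit.
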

\begin{remark}
In practice one would like to guarantee that $\liminf_{\eps\to 0}T^\eps >0$. For that it is enough to show that the bounds one gets on the various terms appearing in the theorem above are independent of $\eps$.
\end{remark}
\begin{remark}
Theorem~\ref{thm:fixedpointthm} does not allow for an abstract formulation of~\eqref{eq:generalpde} in case the non-linearity depends on a discrete version of the gradient. The reason for not including this case is that there are numerous ways to define a notion of the discrete derivative. Hence, it is not clear how to state a single theorem dealing with all these possibilities at once. Yet, the estimates on the various objects obtained in this article probably allow to adapt the abstract fixed point problem in most cases.
\end{remark}
\begin{proof}
With Assumptions~\ref{ass:R} and \ref{ass:Aeps}, and with Lemma~\ref{lem:Kgammakappa} at hand the proof follows in the same way as in~\cite[Thm~7.8]{Regularity}.
\end{proof}

\newpage
\appendix
\section{Norm index}
\label{A}
In this appendix we collect the various norms and metrics used in this article together with their meaning.
\begin{center}

\begin{tabular}{ll}\toprule
Norm & Meaning \\
\midrule
$d_\s(\cdot,\cdot)$ & Metric with scaling $\s$ on $\R^d$\\
$\|x-y\|_\s$ & Alternative expression for $d_\s(x,y)$\\
$\|\cdot\|_{\alpha;\K_\eps;z;\eps}$ & Extended seminorm at scale $\eps$ on $\CX_\eps$\\
$\$\cdot\$_{\gamma;\K;\eps}$ & Seminorm at scale $\eps$ on functions $f:\R^d\to \CT_{<\gamma}$\\
$\$\cdot,\cdot\$_{\gamma;\K;\eps}$ & Distance at scale $\eps$ on pairs $(f,g)$ with $f,g:\R^d\to\CT_{<\gamma}$\\
$\$\cdot\$_{\gamma,\eta;\K;\eps}$ & Seminorm at scale $\eps$ on functions $f:\R^d\to\CT_{<\gamma}$\\
$\$\cdot,\cdot\$_{\gamma,\eta;\K;\eps}$ & Distance at scale $\eps$ on pairs $(f,g)$ with $f,g:\R^d\to\CT_{<\gamma}$\\
$\$\cdot\$_{\gamma;\K}^{(\eps)}$ & Norm on the space $\CD_{\eps}^{\gamma}(\Gamma^\eps)$\\
$\$\cdot,\cdot\$_{\gamma;\K}^{(\eps)}$ & Distance between $f\in\CD_\eps^{\gamma}(\Gamma^\eps)$ and $g\in\CD_\eps^{\gamma}(\bar\Gamma^\eps)$\\
$\$\cdot\$_{\gamma,\eta;\K}^{(\eps)}$ & Norm on the space $\CD_{\eps}^{\gamma,\eta}(\Gamma^\eps)$\\
$\$\cdot,\cdot\$_{\gamma,\eta;\K}^{(\eps)}$ & Distance between $f\in\CD_\eps^{\gamma,\eta}(\Gamma^\eps)$ and $g\in\CD_\eps^{\gamma,\eta}(\bar\Gamma^\eps)$\\
$\|\cdot\|_{m}$ & Norm on $\CT_m$\\
$\|\Pi^\eps\|_{\gamma;\K}^{(\eps)}$ & Smallest proportionality constant such that~\eqref{eq:Pi} holds\\
$\|\Gamma^\eps\|_{\gamma;\K}^{(\eps)}$ & Smallest proportionality constant such that~\eqref{eq:Gamma} holds\\
$\$Z^\eps \$_{\gamma;\K}^{(\eps)}$ & Norm on models given by $\$Z^\eps \$_{\gamma;\K}^{(\eps)}= \|\Pi^\eps\|_{\gamma;\K}^{(\eps)}+ \|\Gamma^\eps\|_{\gamma;\K}^{(\eps)}$\\
$\$Z^\eps;\bar Z^\eps\$_{\gamma;\K}^{(\eps)}$ & Pseudo-metric to compare two different models\\
$\| \cdot\|_{P}$ & Distance to $P$ given by $\|z\|_P= (1\wedge d_\s(z,P))$\\
$\|\cdot,\cdot\|_P$ & Distance defined via $\|y,z\|_P= \|y\|_P\wedge \|z\|_P$\\
$\|\Pi-\Pi^\eps\|_{\gamma;\K;\geq \eps}$ & Distance between $\Pi$ and $\Pi^\eps$ at scales above $\eps$\\
$\|\Gamma-\Gamma^\eps\|_{\gamma;\K;\geq\eps}$ & Distance between $\Gamma$ and $\Gamma^\eps$ at scales above $\eps$\\
$\$Z;Z^\eps\$_{\gamma;\K}$ & Distance between limiting model $Z$ and $\eps$-model $Z^\eps$\\ 
$\$f\$_{\gamma;\K}$ & $\CD^\gamma$-norm for a map $f$ with respect to a continous model\\
$\$f;f^\eps\$_{\gamma;\K}$ & Distance between $f\in\CD^\gamma(\Gamma)$ and $f^\eps\in\CD_\eps^{\gamma}(\Gamma^\eps)$\\
$\$f\$_{\gamma,\eta;\K}$ & $\CD^{\gamma,\eta}$-norm for a map $f$ with respect to a continuous model\\
$\$f;f^\eps\$_{\gamma,\eta;\K}$ & Distance between $f\in\CD^{\gamma,\eta}(\Gamma)$ and $f^\eps\in\CD_\eps^{\gamma,\eta}(\Gamma^\eps)$\\
$\|\Pi;\Pi^\eps\|_{\gamma;\K}$ & Distance between $\Pi$ and $\Pi^\eps$\\
$\|\Gamma;\Gamma^\eps\|_{\gamma;\K}$ & Distance between $\Gamma$ and $\Gamma^\eps$\\
$\$f;f^{\eps}\$_{\gamma,\eta;\K;\geq\eps}$ & Distance in $\CD_{\eps}^{\gamma,\eta}$ between $f$ and $f^{\eps}$ at scales above $\eps$\\
\bottomrule
\end{tabular}

\end{center}

\endappendix
\newpage

\bibliographystyle{Martin}

\bibliography{refs}

\def\cprime{$'$} \def\polhk#1{\setbox0=\hbox{#1}{\ooalign{\hidewidth
  \lower1.5ex\hbox{`}\hidewidth\crcr\unhbox0}}}
\begin{thebibliography}{BHZ16}
\expandafter\ifx\csname url\endcsname\relax
  \def\url#1{\texttt{#1}}\fi
\expandafter\ifx\csname urlprefix\endcsname\relax\def\urlprefix{URL }\fi
\expandafter\ifx\csname href\endcsname\relax
  \def\href#1#2{#2}\fi
\expandafter\ifx\csname burlalt\endcsname\relax
  \def\burlalt#1#2{\href{#2}{\texttt{#1}}}\fi

\bibitem[BG97]{MR1462228}
\textsc{L.~Bertini} and \textsc{G.~Giacomin}.
\newblock Stochastic {B}urgers and {KPZ} equations from particle systems.
\newblock \emph{Comm. Math. Phys.} \textbf{183}, no.~3, (1997), 571--607.
\newblock
  \burlalt{doi:10.1007/s002200050044}{http://dx.doi.org/10.1007/s002200050044}.

\bibitem[BHZ16]{BHZalg}
\textsc{Y.~{Bruned}}, \textsc{M.~{Hairer}}, and \textsc{L.~{Zambotti}}.
\newblock {Algebraic renormalisation of regularity structures}.
\newblock \emph{ArXiv e-prints} (2016).
\newblock \burlalt{arXiv:1610.08468}{http://arxiv.org/abs/1610.08468}.

\bibitem[CH16]{Ajay}
\textsc{A.~Chandra} and \textsc{M.~Hairer}.
\newblock An analytic {BPHZ} theorem for {R}egularity {S}tructures.
\newblock \emph{ArXiv e-prints} (2016).
\newblock \burlalt{arXiv:1612.08138}{http://arxiv.org/abs/1612.08138}.

\bibitem[CS16]{CorwinShen}
\textsc{I.~Corwin} and \textsc{H.~Shen}.
\newblock Open {ASEP} in the weakly asymmetric regime.
\newblock \emph{ArXiv e-prints} (2016).
\newblock \burlalt{arXiv:1610.04931}{http://arxiv.org/abs/1610.04931}.

\bibitem[CST16]{CorwinShenTsai}
\textsc{I.~Corwin}, \textsc{H.~Shen}, and \textsc{L.-C. Tsai}.
\newblock {ASEP}(q,j) converges to the {KPZ} equation.
\newblock \emph{ArXiv e-prints} (2016).
\newblock \burlalt{arXiv:1602.01908}{http://arxiv.org/abs/1602.01908}.

\bibitem[CT15]{CorwinTsai}
\textsc{I.~Corwin} and \textsc{L.-C. Tsai}.
\newblock {KPZ} equation limit of higher-spin exclusion processes.
\newblock \emph{ArXiv e-prints} (2015).
\newblock \burlalt{arXiv:1505.04158}{http://arxiv.org/abs/1505.04158}.

\bibitem[DT16]{DemboTsai}
\textsc{A.~Dembo} and \textsc{L.-C. Tsai}.
\newblock Weakly asymmetric non-simple exclusion process and the
  {Kardar-Parisi-Zhang} equation.
\newblock \emph{Comm. Math. Phys.} \textbf{341}, (2016), 219--261.
\newblock \burlalt{arXiv:1302.5760v5}{http://arxiv.org/abs/1302.5760v5}.
\newblock
  \burlalt{doi:10.1007/s00220-015-2527-1}{http://dx.doi.org/10.1007/s00220-015-2527-1}.

\bibitem[EM18]{PamExclusion}
\textsc{D.~{Erhard}} and \textsc{H.~M.}
\newblock On a scaling limit of the parabolic anderson model with exclusion
  interaction.
\newblock \emph{forthcoming} (2018+).

\bibitem[Fel74]{Fel74}
\textsc{J.~Feldman}.
\newblock The {$\lambda \varphi ^{4}_{3}$} field theory in a finite volume.
\newblock \emph{Comm. Math. Phys.} \textbf{37}, (1974), 93--120.

\bibitem[GJ73]{GlimmJaffe}
\textsc{J.~Glimm} and \textsc{A.~Jaffe}.
\newblock Positivity of the {$\phi ^{4}_{3}$} {H}amiltonian.
\newblock \emph{Fortschr. Physik} \textbf{21}, (1973), 327--376.

\bibitem[GJ13]{GubinelliJara}
\textsc{M.~Gubinelli} and \textsc{M.~Jara}.
\newblock Regularization by noise and stochastic {Burgers} equations.
\newblock \emph{Stochastic Partial Differential Equations: Analysis and
  Computations} \textbf{1}, (2013), 325--350.
\newblock \burlalt{arXiv:1208.6551v2}{http://arxiv.org/abs/1208.6551v2}.
\newblock
  \burlalt{doi:10.1007/s40072-013-0011-5}{http://dx.doi.org/10.1007/s40072-013-0011-5}.

\bibitem[GJ14]{GoncalvesJaraKPZ}
\textsc{P.~c. Gon\c{c}alves} and \textsc{M.~Jara}.
\newblock Nonlinear fluctuations of weakly asymmetric interacting particle
  systems.
\newblock \emph{Arch. Ration. Mech. Anal.} \textbf{212}, no.~2, (2014),
  597--644.
\newblock \burlalt{arXiv:1309.5120}{http://arxiv.org/abs/1309.5120}.
\newblock
  \burlalt{doi:10.1007/s00205-013-0693-x}{http://dx.doi.org/10.1007/s00205-013-0693-x}.

\bibitem[GP15]{GubinelliPerkowskienergy}
\textsc{M.~{Gubinelli}} and \textsc{N.~{Perkowski}}.
\newblock Energy solutions of {KPZ} are unique.
\newblock \emph{ArXiv e-prints} (2015).
\newblock \burlalt{arXiv:1508.07764}{http://arxiv.org/abs/1508.07764}.

\bibitem[GP17]{KPZreloaded}
\textsc{M.~{Gubinelli}} and \textsc{N.~{Perkowski}}.
\newblock {KPZ} reloaded.
\newblock \emph{Comm. Math. Phys.} \textbf{349}, no.~1, (2017), 165--269.
\newblock \burlalt{arXiv:1508.03877}{http://arxiv.org/abs/1508.03877}.
\newblock
  \burlalt{doi:10.1007/s00220-016-2788-3}{http://dx.doi.org/10.1007/s00220-016-2788-3}.

\bibitem[Hai14]{Regularity}
\textsc{M.~Hairer}.
\newblock A theory of regularity structures.
\newblock \emph{Invent. Math.} \textbf{198}, no.~2, (2014), 269--504.
\newblock \burlalt{arXiv:1303.5113}{http://arxiv.org/abs/1303.5113}.
\newblock
  \burlalt{doi:10.1007/s00222-014-0505-4}{http://dx.doi.org/10.1007/s00222-014-0505-4}.

\bibitem[HL15]{HairerLabbe}
\textsc{M.~Hairer} and \textsc{C.~Labb{\'e}}.
\newblock Multiplicative stochastic heat equations on the whole space.
\newblock \emph{ArXiv e-prints} (2015).
\newblock \burlalt{arXiv:1504.07162}{http://arxiv.org/abs/1504.07162}.

\bibitem[HM15]{MatetskiDiscrete}
\textsc{M.~Hairer} and \textsc{K.~Matetski}.
\newblock Discretisations of rough stochastic {PDEs}.
\newblock \emph{ArXiv e-prints} (2015).
\newblock \burlalt{arXiv:1511.06937}{http://arxiv.org/abs/1511.06937}.

\bibitem[HQ15]{KPZJeremy}
\textsc{M.~Hairer} and \textsc{J.~Quastel}.
\newblock A class of growth models rescaling to {KPZ}.
\newblock \emph{ArXiv e-prints} (2015).
\newblock \burlalt{arXiv:1512.07845}{http://arxiv.org/abs/1512.07845}.

\bibitem[HS15]{NonGaussShen}
\textsc{M.~Hairer} and \textsc{H.~Shen}.
\newblock A central limit theorem for the { KPZ} equation.
\newblock \emph{ArXiv e-prints} (2015).
\newblock \burlalt{arXiv:1507.01237}{http://arxiv.org/abs/1507.01237}.

\bibitem[HX16]{Phi4Weijun}
\textsc{M.~Hairer} and \textsc{W.~Xu}.
\newblock Large scale behaviour of {3D} continuous phase coexistence models.
\newblock \emph{ArXiv e-prints} (2016).
\newblock \burlalt{arXiv:1601.05138}{http://arxiv.org/abs/1601.05138}.

\bibitem[Lab16]{Labbe}
\textsc{C.~Labb{\'e}}.
\newblock Weakly asymmetric bridges and the {KPZ} equation.
\newblock \emph{ArXiv e-prints} (2016).
\newblock \burlalt{arXiv:1603.03560}{http://arxiv.org/abs/1603.03560}.

\bibitem[MW17]{MourratWeber}
\textsc{J.-C. Mourrat} and \textsc{H.~Weber}.
\newblock Convergence of the two-dimensional dynamic {Ising-Kac} model to
  {$\Phi^4_2$}.
\newblock \emph{Comm. Pure Appl. Math.} \textbf{70}, no.~4, (2017), 717--812.
\newblock \burlalt{arXiv:1410.1179v2}{http://arxiv.org/abs/1410.1179v2}.
\newblock \burlalt{doi:10.1002/cpa.21655}{http://dx.doi.org/10.1002/cpa.21655}.

\bibitem[SW16]{ShenWeber}
\textsc{H.~Shen} and \textsc{H.~Weber}.
\newblock Glauber dynamics of {2D Kac-Blume-Capel model} and their stochastic
  {PDE} limits.
\newblock \emph{ArXiv e-prints} (2016).
\newblock \burlalt{arXiv:1608.06556}{http://arxiv.org/abs/1608.06556}.

\bibitem[ZZ14]{ZhuZhuNS}
\textsc{R.~Zhu} and \textsc{X.~Zhu}.
\newblock Approximating three-dimensional {Navier-Stokes} equations driven by
  space-time white noise.
\newblock \emph{ArXiv e-prints} (2014).
\newblock \burlalt{arXiv:1409.4864}{http://arxiv.org/abs/1409.4864}.

\bibitem[ZZ15]{ZhuZhuPhi}
\textsc{R.~Zhu} and \textsc{X.~Zhu}.
\newblock Lattice approximation to the dynamical {$\Phi^4_3$} model.
\newblock \emph{ArXiv e-prints} (2015).
\newblock \burlalt{arXiv:1508.05613}{http://arxiv.org/abs/1508.05613}.

\end{thebibliography}

\end{document}